\theoremstyle{plain}
\newtheorem{theo}{Theorem}[section]
\newtheorem{pr}{Proposition}[section]
\newtheorem{lem}{Lemma}[section]
\theoremstyle{remark}
\newtheorem*{rem}{Remark}
\newtheorem*{nota}{Notation}
\theoremstyle{definition}
\numberwithin{equation}{section}
\begin{document}
\bibliographystyle{plain}
\title{Two results on the Dunkl maximal operator}
\author{Luc DELEAVAL}
\date{July, 2010}
\address{Institut de Mathématiques de Jussieu, Université Pierre et Marie Curie, 175, rue du Chevaleret, 75013 Paris, France}
\email{deleaval@math.jussieu.fr}
\keywords{Dunkl operators, Maximal operator, Fefferman-Stein-type operator, Real analysis.}
\subjclass[2000]{42B10, 42B25}
\thanks{The author is pleased to express his thanks to Banafsheh for her advises in english.}
\begin{abstract} 
In this article, we first improve the scalar maximal theorem for the Dunkl maximal operator by giving  some precisions on the behavior of the constants of this theorem for a general reflection group. Next we complete the vector-valued theorem for the Dunkl-type Fefferman-Stein operator in the case $\mathbb Z_2^d$ by establishing a result of exponential integrability corresponding to the case $p=+\infty$.
\end{abstract}
\maketitle

\section{Introduction}

Dunkl operators provide an essential tool to extend Fourier analysis on Euclidean spaces and analysis on Riemannian symmetric spaces of Euclidean type. Since their invention in 1989, these operators have largely contributed, in the setting of root systems and associated reflection groups, to the development of harmonic analysis and to the theory of multivariable hypergeometric functions. \newline \indent In this paper, we focus on the Dunkl maximal operator $M_\kappa^W$ which is defined by
\[
M_\kappa^Wf(x)=\sup_{r>0}\frac{1}{\mu_\kappa^W(B_r)}\biggl|\int_{\mathbb R^d}f(y)\tau_x^{W}(\chi_{{}_{B_r}})(-y)\,\mathrm{d}\mu_\kappa^W(y)\biggr|, \quad x \in \mathbb R^d,
\]
where $\chi_{{}_{B_r}}$ is the characteristic function of the Euclidean ball of radius $r$ centered at the origin, $\tau_x^{W}$ is the Dunkl translation and $\mu_\kappa^W$ is a weighted Lebesgue measure invariant under the action of the reflection group $W$ (see Section 2 for more details). This operator, which reduces to the well-known Hardy-Littlewood maximal operator in the case where the multiplicity function $\kappa$ is equal to $0$ (see Section 2 for details), is of particular interest for harmonic analysis associated with root systems. Nevertheless, the structure of the Dunkl translation prevents us from using the tools of real analysis (covering lemma, weighted inequality, Calder\'on-Zygmund decomposition...) and makes  the study of $M_\kappa^W$ difficult. \newline \indent However, Thangavelu and Xu  \cite{ThangaXuMaximal} succeeded in proving  the following scalar maximal theorem, where we denote by $L^p(\mu_\kappa^W)$ the space $L^p(\mathbb R^d; \mu_\kappa^W)$ (for $1\leqslant p\leqslant +\infty$) and we use the shorter notation $\mathopen\|\cdot\mathclose\|_{W,p}$ instead of $\mathopen\|\cdot\|_{L^p(\mu_\kappa^W)}$.

\begin{theo}[\bf{Scalar maximal theorem}] \label{TheoMaxXu}
 Let $f$ be a measurable function defined on $\mathbb R^d$.
\begin{enumerate}
\item If $f \in L^1(\mu_\kappa^W)$, then for every  $\lambda>0$, 
\[
\mu_\kappa^W\Bigl(\Bigl\{x \in \mathbb R^d: M_\kappa^Wf(x)>\lambda\Bigr\}\Bigr)\leqslant\frac{C}{\lambda}\|f\|_{W,1},
\]
where $C=C(d,\kappa)$ is a constant independent of $f$ and $\lambda$.
\item If $f \in L^p(\mu_\kappa^W)$ with $1<p\leqslant+\infty$, then $M_\kappa^Wf \in L^p(\mu_\kappa^W)$ and 
\[
\bigl\|M_\kappa^Wf\bigr\|_{W,p}\leqslant C\|f\|_{W,p},
\]
where $C=C(d,\kappa,p)$ is a constant independent of $f$.
\end{enumerate}
\end{theo}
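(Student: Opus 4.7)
The plan is to prove parts (1) and (2) separately and then combine them by Marcinkiewicz interpolation, which will deliver the strong $L^p$ bound in (2) for $1<p<\infty$ from the weak $(1,1)$ inequality of (1) together with the endpoint $L^\infty$ bound. The $L^\infty$ estimate is the easier half. Because $\chi_{{}_{B_r}}$ is radial, R\"osler's positivity theorem for the Dunkl translation of radial functions gives $\tau_x^W(\chi_{{}_{B_r}})(-y)\geq 0$, and the self-adjointness of $\tau_x^W$ with respect to $\mu_\kappa^W$, combined with $\tau_x^W(\mathbf 1)=\mathbf 1$, yields
\[
\int_{\mathbb R^d}\tau_x^W(\chi_{{}_{B_r}})(-y)\,\mathrm d\mu_\kappa^W(y)=\mu_\kappa^W(B_r),
\]
from which $\|M_\kappa^W f\|_{W,\infty}\leq \|f\|_{W,\infty}$ is immediate.

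For part (1) my strategy is to dominate $M_\kappa^W$ pointwise by a more classical maximal operator to which the standard real-variable machinery applies. Two ingredients are needed: (a) the measure $\mu_\kappa^W$ is doubling with respect to Euclidean balls, a direct consequence of the homogeneity of the Dunkl weight $w_\kappa$ and the $W$-invariance; and (b) the non-negative kernel $\tau_x^W(\chi_{{}_{B_r}})(-y)$ is essentially concentrated on the orbit neighbourhood $\bigcup_{g\in W}B(gx,r)$ with a local mass of order $1/\mu_\kappa^W(B(x,r))$. Combining these should produce an orbit-averaged comparison of the form
\[
M_\kappa^W f(x)\leq C\sum_{g\in W}\mathcal M_\kappa(|f|)(gx),
\]
where $\mathcal M_\kappa$ denotes the centred Hardy--Littlewood maximal operator associated with the doubling measure $\mu_\kappa^W$. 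Since a doubling metric measure space supports a Vitali covering lemma, $\mathcal M_\kappa$ is of weak type $(1,1)$ on $(\mathbb R^d,\mu_\kappa^W)$; summing over $g\in W$ and using the $W$-invariance of $\mu_\kappa^W$ then produces the claimed bound with a constant $C(d,\kappa)$ depending only on $|W|$ and the doubling constant.

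The main obstacle is item (b). Outside the special case $W=\mathbb Z_2^d$ no convenient closed-form expression for $\tau_x^W(\chi_{{}_{B_r}})$ is available, and while R\"osler's theorem provides positivity on radial data it does not directly deliver the support and local mass estimates we need. One is therefore forced to work through R\"osler's integral representation of $\tau_x^W$ on radial functions via the intertwining operator $V_\kappa$ and its associated family of positive probability measures, and to extract from it the orbit-type geometry suggested by the $\mathbb Z_2^d$ model. This analytic step is the genuinely delicate one; once it is carried out, the remainder of the argument is classical (Vitali covering, summation over the $W$-orbit, Marcinkiewicz interpolation), and the behaviour of the resulting constants is sufficiently transparent to support the refined analysis the paper intends to pursue.
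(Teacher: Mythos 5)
Your $L^\infty$ endpoint is fine: positivity of $\tau_x^W$ on radial data (Theorem \ref{TransRad}) together with Theorem \ref{ega} gives $\|M_\kappa^Wf\|_{W,\infty}\leqslant\|f\|_{W,\infty}$, and the doubling of $\mu_\kappa^W$ and the weak type $(1,1)$ of the Hardy--Littlewood maximal operator on a doubling space are also unproblematic. The genuine gap is your ingredient (b), which is not a ``delicate analytic step'' still to be written out but precisely the point where the argument collapses for a general reflection group. R\"osler's representation of $\tau_x^W$ on radial functions expresses $\tau_x^W f(-y)$ as the integral of $f$ against a positive probability measure whose support is controlled only through the norms: it lives in $\bigl\{z:\bigl|\|x\|-\|y\|\bigr|\leqslant\|z\|\leqslant\|x\|+\|y\|\bigr\}$. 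This gives no angular localization around the orbit $Wx$, so you cannot conclude that $\tau_x^W(\chi_{{}_{B_r}})(-y)$ is concentrated on $\bigcup_{g\in W}B(gx,r)$ with local mass of order $1/\mu_\kappa^W(B(x,r))$; a priori the kernel may spread over the whole annulus $\|x\|-r\leqslant\|y\|\leqslant\|x\|+r$, whose $\mu_\kappa^W$-measure is in general vastly larger than that of $B(x,r)$. The pointwise domination $M_\kappa^Wf\leqslant C\sum_{g\in W}\mathcal M_\kappa(|f|)(g\,\cdot)$ is exactly what one \emph{can} prove for $W=\mathbb Z_2^d$, where the explicit product formula for the translation is available (this is how the vector-valued theorem of \cite{Luc} is obtained), but no such formula or localization is known for a general $W$; it is the same obstruction that keeps the $L^p(\mu_\kappa^W)$-boundedness of $\tau_x^W$ open.

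The proof the paper relies on (due to Thangavelu and Xu, and sharpened in Section 3) sidesteps localization entirely. One dominates the normalized indicator $\mu_\kappa^W(B_1)^{-1}\chi_{{}_{B_1}}$ pointwise by a constant times the time-average $\frac{1}{t_0}\int_0^{t_0}q_t^{W}\,\mathrm{d}t$ of the radial Gaussians, rescales, and applies $\tau_x^W$ using only its positivity on radial $L^1$ functions; after integrating against $f$ this yields
\[
M_\kappa^Wf(x)\leqslant C(d,\kappa)\sup_{s>0}\frac{1}{s}\int_0^{s}H_t^{W}f(x)\,\mathrm{d}t,
\]
where $\{H_t^{W}\}_{t\geqslant0}$ is the Dunkl heat semi-group. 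The weak $(1,1)$ bound is then the Hopf--Dunford--Schwartz ergodic theorem (Theorem \ref{HDS}) applied to this contraction semi-group, and the $L^p$ bound follows either by interpolation with $L^\infty$ or, as in Lemma \ref{CFO}, from Stein's maximal theorem for symmetric diffusion semi-groups. Only positivity, the normalization $\int q_t^{W}\,\mathrm{d}\mu_\kappa^W=1$, and the contraction property of $H_t^{W}$ are needed, none of which require any geometric control of the translated kernel. If you want to salvage your approach, you must restrict to $\mathbb Z_2^d$; otherwise you should adopt the semi-group route.
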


In order to prove this theorem, Thangavelu and Xu have used the following Hopf-Dunford-Schwartz ergodic theorem (see \cite{DunfordSchwartz}).

\begin{theo}[\bf{Hopf-Dunford-Schwartz ergodic theorem}] \label{HDS}
 Let $X$ be a measurable space and let $m$ be a positive measure on $X$. Let $\{T_t\}_{t\geqslant0}$ be a  contraction semigroup of operators on $L^p(X;m)$, that is,  a semigroup which satisfies, for every $p \in [1,+\infty]$ and every $f \in L^p(X;m)$,
\[
\|T_tf\|_{L^p(X;m)}\leqslant\|f\|_{L^p(X;m)}.
\]
Define
\[
Mf(x)=\sup_{t>0}\biggl|\frac{1}{t}\int_{0}^{t}T_{s}f(x)\,\mathrm{d}s\biggr|. 
\]
\begin{enumerate}
\item If $f \in L^1(X;m)$, then for every $\lambda>0$, 
\[
m\Bigl(\Bigl\{x \in X: Mf(x)>\lambda\Bigr\}\Bigr)\leqslant\frac{2}{\lambda}\|f\|_{L^1(X;m)}.
\]
\item If $f \in L^p(X;m)$, with $1<p\leqslant+\infty$, then $Mf \in L^p(X;m)$ and 
\[
\bigl\|Mf\bigr\|_{L^p(X;m)}\leqslant C\|f\|_{L^p(X;m)},
\]
where $C=C(p)$ is a constant independent of $f$.
\end{enumerate}
\end{theo}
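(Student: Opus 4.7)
The plan is to split the argument into endpoints and interpolate: first establish the $L^\infty$ bound directly from contractivity, then prove the weak-type $(1,1)$ estimate via the classical Hopf maximal ergodic lemma, and finally apply the Marcinkiewicz interpolation theorem to obtain the strong-type $(p,p)$ inequality for every $1<p<+\infty$.

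The $L^\infty$ case is immediate. If $\|f\|_{L^\infty(X;m)}\leqslant M$, then by the contraction hypothesis $|T_s f(x)|\leqslant M$ for every $s\geqslant 0$ and $m$-almost every $x$. The time-average $\frac{1}{t}\int_0^t T_s f(x)\,\mathrm ds$ is therefore bounded in modulus by $M$ at $m$-almost every $x$, uniformly in $t>0$, giving $\|\mathscr Mf\|_{L^\infty(X;m)}\leqslant\|f\|_{L^\infty(X;m)}$.

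For the weak-type $(1,1)$ estimate, which I expect to be the main obstacle, I would invoke Hopf's maximal ergodic lemma. The usual route is to discretize: for each mesh $h>0$, approximate the continuous averages by Riemann sums involving the iterates of the single contraction $T_h$, for which Hopf's classical stopping-time (``sunrise'') argument on $L^1\cap L^\infty$ yields the discrete maximal inequality. Applying that inequality to the positive and negative parts of $f$ separately produces the constant $2$. One then passes back to the continuous-time supremum by first restricting to $0<t<T$, exploiting strong continuity (and hence measurability) of $s\mapsto T_sf$, and letting $T\to+\infty$ together with $h\to 0$ along a countable sequence, by monotone convergence.

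Once both endpoints are in hand, the Marcinkiewicz interpolation theorem applied between the weak-type $(1,1)$ inequality and the strong-type $(\infty,\infty)$ inequality yields the $L^p$ boundedness of $\mathscr M$ for every $1<p<+\infty$, with a constant $C=C(p)$ depending only on $p$. The delicate point lies precisely in the discretization step, namely matching the continuous supremum by the discrete one while keeping track of the sharp constant $2$; after this reduction the remainder of the proof is routine.
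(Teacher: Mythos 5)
The paper does not prove this theorem: it is imported as a known tool from Dunford and Schwartz \cite{DunfordSchwartz}, so there is no internal proof to compare yours against. Your outline is the standard textbook strategy (trivial $L^\infty$ endpoint, weak type $(1,1)$ via a discrete maximal ergodic lemma followed by a passage to the continuous supremum, then Marcinkiewicz interpolation), and at that level it is correct. One step of your reduction is genuinely shaky, however: Hopf's sunrise/stopping-time lemma applies to \emph{positive} $L^1$-contractions, whereas the operators $T_t$ here are only assumed to be contractions on every $L^p$, with no positivity hypothesis. Splitting $f$ into $f^+$ and $f^-$ does not repair this, since even for $f\geqslant 0$ the iterates $T_h^k f$ may change sign, so the modulus of the ergodic averages of $f$ is not controlled by applying the positive-operator lemma to $f^+$ and $f^-$ separately. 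The standard fixes are either to pass to the linear modulus $\tau$ of $T_h$, a positive contraction on $L^1$ and $L^\infty$ satisfying $|T_h g|\leqslant\tau |g|$, and apply Hopf's lemma to $\tau$ and $|f|$, or to split $f$ at height $\lambda/2$ into a part of small $L^\infty$-norm and an $L^1$ part; the latter splitting is in fact where the constant $2$ arises in \cite{DunfordSchwartz}, so your attribution of the $2$ to the $f^{\pm}$ decomposition is not how the classical argument produces it. For the purposes of this paper the point is harmless, since the semigroup $\{H_t^W\}_{t\geqslant0}$ to which the theorem is applied is positive, but as a proof of the theorem as stated the positivity issue must be addressed.
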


We will see that we can use  the previous theorem in order to refine the scalar maximal theorem. More precisely, our first result is the following, where we denote by $2\gamma$ the degree of homogeneity of the measure $\mu_\kappa^W$.

\begin{theo} \label{firstresult} Let $f$ be a measurable function defined on $\mathbb R^d$.
\begin{enumerate}
\item There exists a numerical constant $C$ such that if $f \in L^1(\mu_\kappa^W)$, then  for every $\lambda>0$,
\[
\mu_\kappa^W\Bigl(\Bigl\{x \in \mathbb{R}^d: M_\kappa^Wf(x)>\lambda\Bigr\}\Bigr)\leqslant C\frac{d+2\gamma}{\lambda}\|f\|_{W,1}.
\]
\item There exists a numerical constant $C$ such that if $f \in L^p(\mu_\kappa^W)$ with $1<p<+\infty$, then 
\[
\bigl\|M_\kappa^Wf\bigr\|_{W,p}\leqslant C\frac{p}{p-1}\sqrt{d+2\gamma}\,\|f\|_{W,p}.
\]
\end{enumerate}
\end{theo}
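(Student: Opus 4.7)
The plan is to follow the Thangavelu-Xu strategy via the Dunkl heat semigroup $\{H_t^\kappa\}_{t\geq 0}$, but with much sharper control of the constants. Recall that $H_t^\kappa$ acts by convolution against the Dunkl heat kernel $K_t(z)=M_\kappa^{-1}(4t)^{-(d+2\gamma)/2}e^{-|z|^2/(4t)}$ with $M_\kappa=A_\kappa\Gamma((d+2\gamma)/2)/2$ and $A_\kappa=\int_{S^{d-1}}w_\kappa\,d\sigma$, that $\{H_t^\kappa\}$ is a symmetric positive contraction semigroup on every $L^p(\mu_\kappa^W)$, and that $\mu_\kappa^W(B_r)=A_\kappa r^{d+2\gamma}/(d+2\gamma)$. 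By R\"osler's positivity of $\tau_x^W$ on non-negative radial functions, pointwise inequalities among such functions transfer through translation. The crux is the Gaussian domination $\chi_{B_r}(z)\leq e^{\alpha(1-|z|^2/r^2)}$, valid for every $\alpha>0$, which after rewriting $e^{-\alpha|z|^2/r^2}$ as a multiple of $K_{r^2/(4\alpha)}(z)$ and dividing by $\mu_\kappa^W(B_r)$ yields
\[
\frac{\tau_x^W(\chi_{B_r})(-y)}{\mu_\kappa^W(B_r)}\leq \frac{(d+2\gamma)\Gamma((d+2\gamma)/2)e^\alpha}{2\,\alpha^{(d+2\gamma)/2}}\,h_{r^2/(4\alpha)}(x,y),
\]
where $h_t(x,y)=\tau_x^W(K_t)(-y)$ is the Dunkl heat kernel. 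The parameter $\alpha$ will be tuned differently for the two parts.

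For part (2), I would take $\alpha=(d+2\gamma)/2$, which is the saddle point of $\alpha\mapsto e^\alpha\alpha^{-(d+2\gamma)/2}$; Stirling's formula then collapses the prefactor to a numerical multiple of $\sqrt{d+2\gamma}$, giving the pointwise bound $M_\kappa^W f(x)\leq C\sqrt{d+2\gamma}\,\sup_{t>0}H_t^\kappa|f|(x)$. Since $\{H_t^\kappa\}$ is a symmetric diffusion semigroup in the sense of Stein, his maximal theorem for such semigroups supplies $\|\sup_{t>0}H_t^\kappa f\|_{W,p}\leq C(p/(p-1))\|f\|_{W,p}$ for $1<p\leq+\infty$, and composition delivers part (2).

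For part (1), Stein's theorem does not provide a weak $(1,1)$ bound, so I would return to the Hopf-Dunford-Schwartz theorem as in Thangavelu-Xu and instead establish the ergodic-type kernel inequality
\[
\frac{\chi_{B_r}(z)}{\mu_\kappa^W(B_r)}\leq C(d+2\gamma)\,\frac{1}{T(r)}\int_0^{T(r)}K_s(z)\,ds,\qquad T(r)=\frac{r^2}{2(d+2\gamma)},
\]
valid for $|z|\leq r$ and trivial elsewhere. Since both sides are radial and the right-hand side is decreasing in $|z|$ while the left-hand side is constant on $B_r$, it suffices to verify the inequality at the boundary $|z|=r$; the substitution $u=r^2/(4s)$ turns the time integral into an upper incomplete Gamma function $\Gamma((d+2\gamma)/2-1,(d+2\gamma)/2)$, and the Stirling asymptotic $\Gamma((d+2\gamma)/2)/\Gamma((d+2\gamma)/2-1,(d+2\gamma)/2)\sim d+2\gamma$ delivers the linear constant. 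Applying Theorem \ref{HDS}(1), whose universal constant is $2$, then gives part (1).

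The main obstacle is the sharp bookkeeping of the Gaussian and Gamma-function constants: a naive scale choice such as $\alpha=1$ or $T(r)=r^2$ would leave a constant of super-exponential order $\Gamma((d+2\gamma)/2)$, and only the saddle-point optimizations $\alpha=(d+2\gamma)/2$ in part (2) and $T(r)\propto r^2/(d+2\gamma)$ in part (1) collapse the constants to the claimed $\sqrt{d+2\gamma}$ and $d+2\gamma$ growths. A secondary subtlety is that Stein's theorem applies to the full semigroup maximum $\sup_t H_t^\kappa$, not to the ergodic average appearing in Theorem \ref{HDS}, which is why two distinct kernel comparisons are needed for the two parts.
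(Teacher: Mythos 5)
Your proposal follows essentially the same route as the paper: domination of the normalized ball indicator by the Dunkl heat kernel at the optimized time scale $t\asymp r^2/(d+2\gamma)$, transferred by positivity of $\tau_x^W$ on nonnegative radial functions, with the Hopf--Dunford--Schwartz theorem handling the weak $(1,1)$ bound, Stein's maximal theorem for symmetric diffusion semigroups handling $1<p\leqslant+\infty$, and Stirling's formula collapsing the Gamma factors to $d+2\gamma$ and $\sqrt{d+2\gamma}$ respectively. The only step you pass over that the paper verifies at length is the interchange $\tau_x^W\bigl(\int_0^{T}q_t^{W}\,\mathrm{d}t\bigr)=\int_0^{T}\tau_x^{W}(q_t^{W})\,\mathrm{d}t$ needed to identify the translated time-average with the ergodic average of the semigroup; this is a routine approximation argument via the inversion formula, not a gap in the idea.
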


In the particular case where $\gamma=0$, the previous theorem coincides with a theorem due to Stein and Strömberg  for the Hardy-Littlewood maximal operator (\cite{SteinStromberg}).

Our second result deals with the vector-valued extension of the scalar maximal theorem which has been proved in \cite{Luc} in the case where  the reflection group is $\mathbb Z_2^d$. Let us recall this theorem. Denote by $\mathcal M_\kappa^{\mathbb Z_2^d}$ the Dunkl-type Fefferman-Stein operator given for a sequence $f=(f_n)_{n\geqslant1}$ of measurable functions by
\[
\mathcal M_\kappa^{\mathbb Z_2^d}f=\bigl(M_\kappa^{\mathbb Z_2^d}f_n\bigr)_{n\geqslant1}.
\]

\begin{theo}[\bf{Vector-valued maximal theorem}] \label{FS}
Let $W=\mathbb Z_2^d$ and let $f=(f_n)_{n\geqslant1}$ be a sequence of measurable functions defined on $\mathbb R^d$. 
\begin{enumerate} 
\item Let $1<r<+\infty$. If $\|f\|_{\ell^r} \in L^1(\mu^{\mathbb Z^d_2}_\kappa)$, then for every $\lambda>0$, 
\[
\mu^{\mathbb Z^d_2}_\kappa\Bigl(\Bigl\{x \in \mathbb R^d: \|\mathcal M_\kappa^{\mathbb Z_2^d}(x)\|_{\ell^r}>\lambda\Bigr\} \Bigr)\leqslant \frac{C}{\lambda}\bigl\|\,\|f\|_{\ell^r}\bigr\|_{\mathbb Z^d_2,1},
\]
where $C=C(d,\kappa,r)$ is a constant independent of $(f_n)_{n\geqslant1}$ and $\lambda$.
\item Let $1<r,p<+\infty$. If $\|f\|_{\ell^r} \in L^p(\mu^{\mathbb Z^d_2}_\kappa)$, then 
\[
\bigl\|\,\|\mathcal M_\kappa^{\mathbb Z_2^d}f\|_{\ell^r}\bigr\|_{\mathbb Z_2^d,p}\leqslant C\bigl\|\,\|f\|_{\ell^r}\bigr\|_{\mathbb Z_2^d,p},
\]
where $C=C(d,\kappa,p,r)$ is a constant independent of $(f_n)_{n\geqslant1}$.
\end{enumerate}
\end{theo}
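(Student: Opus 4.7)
The plan is to derive the vector-valued Fefferman-Stein type theorem from the scalar maximal theorem (Theorem \ref{TheoMaxXu}) by a standard Fefferman-Stein argument, realised at the level of real analysis in the Dunkl setting. The reason the statement is restricted to $W = \mathbb Z_2^d$ is that for this particular reflection group the Dunkl translation $\tau_x^{\mathbb Z_2^d}$ admits an explicit positive integral representation, from which one can control $\tau_x^{\mathbb Z_2^d}(\chi_{{}_{B_r}})$ pointwise. Such control is what compensates for the lack of a genuine Calder\'on-Zygmund machinery in the Dunkl framework: it permits, in the weighted space $(\mathbb{R}^d, \mu_\kappa^{\mathbb Z_2^d})$, a Whitney-type decomposition and the construction of suitable ``Dunkl balls'' on which a Calder\'on-Zygmund decomposition can be carried out.

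First I would prove the diagonal case $p = r$, which follows immediately by applying the strong-type part of Theorem \ref{TheoMaxXu} to each $f_n$ and using Tonelli's theorem:
\[
\int_{\mathbb{R}^d} \sum_{n \ge 1} \bigl(M_\kappa^{\mathbb Z_2^d} f_n(x)\bigr)^r \, \mathrm d\mu_\kappa^{\mathbb Z_2^d}(x) \le C^r \int_{\mathbb{R}^d} \sum_{n \ge 1} |f_n(x)|^r \, \mathrm d\mu_\kappa^{\mathbb Z_2^d}(x),
\]
which is the strong-type $\ell^r$-valued bound at exponent $p = r$.

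Next I would establish the vector-valued weak-type $(1,1)$ inequality of part (1) for this fixed $r$. The idea is to perform a Calder\'on-Zygmund decomposition of the scalar function $F := \|f\|_{\ell^r}$ at height $\lambda$ with respect to the measure $\mu_\kappa^{\mathbb Z_2^d}$, writing $F = g + \sum_j b_j$ where $\|g\|_\infty$ is bounded by a multiple of $\lambda$, each $b_j$ is supported on a set $Q_j$ with controlled mean, and $\sum_j \mu_\kappa^{\mathbb Z_2^d}(Q_j)$ is bounded by a multiple of $\|F\|_{\mathbb Z_2^d,1}/\lambda$. Transferring this splitting coordinate-wise to the sequence $(f_n)$ and decomposing $\mathcal M_\kappa^{\mathbb Z_2^d} f$ accordingly, the good part is controlled via the $(r,r)$ bound already obtained, while the bad part is bounded on the complement of a controlled enlargement of $\bigcup_j Q_j$ using the explicit pointwise estimates for $\tau_x^{\mathbb Z_2^d}$. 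Combining this weak-type $(1,1)$ estimate with the strong-type $(r,r)$ estimate through a vector-valued Marcinkiewicz interpolation then yields the strong-type $(p,p)$ bound for $1 < p \le r$, and the range $r < p < \infty$ is handled by a standard duality argument in $\ell^{r'}$-valued Lebesgue spaces, which again reduces to the scalar theorem.

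The main obstacle is the construction of the Calder\'on-Zygmund decomposition and the cancellation estimate for the bad part in this weighted, non translation-invariant framework: neither a dyadic structure respecting both the weight $\prod_{i=1}^d |x_i|^{2\kappa_i}$ and the action of $\tau_x^{\mathbb Z_2^d}$, nor a genuine Vitali covering lemma, are available in the Dunkl setting. What makes the case $W = \mathbb Z_2^d$ tractable is that the product nature of the weight together with the coordinate-wise sign-flip formula for $\tau_x^{\mathbb Z_2^d}$ allow a product-type decomposition to play the role of the classical one; extending such a construction to a general reflection group $W$ remains a substantial open problem, which is precisely why the theorem is stated only for $\mathbb Z_2^d$.
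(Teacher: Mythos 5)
First, a point of reference: the present paper does not prove this theorem at all --- it is quoted from \cite{Luc} --- and the only structural information given here is in the proof of Lemma \ref{ie2}, which records that the argument of \cite{Luc} (i) constructs an auxiliary operator $M_\kappa^{\mathbb Z_2^d,R}$ dominating $M_\kappa^{\mathbb Z_2^d}$ pointwise, (ii) proves for it a scalar maximal theorem and a Fefferman--Stein weighted inequality, and (iii) then follows the classical Fefferman--Stein proof almost verbatim. Your overall scheme (diagonal case $p=r$ by Tonelli, weak type $(1,1)$ via a Calder\'on--Zygmund decomposition of $\|f\|_{\ell^r}$, Marcinkiewicz interpolation for $1<p\leqslant r$, duality for $p>r$) is exactly that classical architecture, so at this level you are on the same track as the cited proof.

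There are, however, two genuine gaps, both located at the Dunkl-specific heart of the matter. The first is the missing domination of $M_\kappa^{\mathbb Z_2^d}$ by an honest averaging operator over a doubling family of sets: without it, the bad-part estimate in the weak $(1,1)$ step cannot be run, since $\tau_x^{\mathbb Z_2^d}(\chi_{{}_{B_r}})(-\cdot)$ is not concentrated near a Euclidean ball around $x$ but near a union of reflected annuli, so the needed decay of $M_\kappa^{\mathbb Z_2^d}b_{n,j}$ away from $Q_j$ is not automatic. You correctly identify this as the ``main obstacle'' but then leave it unresolved with an appeal to ``explicit pointwise estimates''; yet this construction (the operator $M_\kappa^{\mathbb Z_2^d,R}$ built from the kernel $\mathcal K_\kappa$ and the support and total-variation properties of $\nu^{\mathbb Z_2}_{x,y}$ in Theorem \ref{mesros}) is precisely the content of the proof, not a routine preliminary. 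Note also that your premise of a ``positive integral representation'' of $\tau_x^{\mathbb Z_2^d}$ is false: the paper stresses that $\tau_x^W$ is not a positive operator, the representing measure being signed with $\|\nu^{\mathbb Z_2}_{x,y}\|\leqslant4$ (what is true, by Theorem \ref{TransRad}, is that $\tau_x^W(\chi_{{}_{B_r}})\geqslant0$). The second gap is in the range $r<p<+\infty$: the duality argument does not ``reduce to the scalar theorem''. After writing $\bigl\|\,\|\mathcal M_\kappa^{\mathbb Z_2^d}f\|_{\ell^r}\bigr\|_{\mathbb Z_2^d,p}^{r}=\sup_{u}\int\sum_{n}\bigl(M_\kappa^{\mathbb Z_2^d}f_n\bigr)^{r}u\,\mathrm{d}\mu_\kappa^{\mathbb Z_2^d}$ over nonnegative $u$ with $\|u\|_{\mathbb Z_2^d,(p/r)'}\leqslant1$, one must invoke a weighted inequality of the form $\int\bigl(M_\kappa^{\mathbb Z_2^d,R}g\bigr)^{r}u\,\mathrm{d}\mu_\kappa^{\mathbb Z_2^d}\leqslant C\int|g|^{r}\,M_\kappa^{\mathbb Z_2^d,R}u\,\mathrm{d}\mu_\kappa^{\mathbb Z_2^d}$ before H\"older and the scalar maximal theorem in $L^{(p/r)'}(\mu_\kappa^{\mathbb Z_2^d})$ can be applied; this weighted inequality is the second nontrivial lemma of \cite{Luc} and is absent from your proposal. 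Its role is also visible in Lemma \ref{ie2}, where the constant $C(d,\kappa,p,r)$ is governed by the maximal constant in $L^{p/(p-r)}(\mu_\kappa^{\mathbb Z_2^d})$ raised to the power $\tfrac{1}{r}$.
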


We will see that no analogue of $(2)$ holds when $p=+\infty$. However,  in this case we will give the following result of exponential integrability on every compact set, which generalizes the classical one due to Fefferman and Stein (see \cite{FeffermanStein} or \cite[page 75]{SteinGros}).

\begin{theo} \label{expint}
Let $f=(f_n)_{n\geqslant1}$ be a sequence of measurable functions defined on $\mathbb{R}^d$ and let $1<r<+\infty$. If  $\|f\|_{\ell^r} \in L^\infty(\mu^{\mathbb{Z}^d_2}_\kappa)$ is such that
\[
\mu^{\mathbb{Z}^d_2}_\kappa\Bigl(\mathrm{supp }\|f\|_{\ell^r}^r\Bigr)<+\infty, 
\]
then the function $\|\mathcal{M}_\kappa^{\mathbb{Z}_2^d}f\|_{\ell^r}^r$ is exponentially integrable on every compact set. More precisely there exists a constant $C_{d,\kappa,r}$, which depends only on $d$, $\kappa$ and $r$, such that for every compact subset $K$ of  $\mathbb{R}^d$ and for every $\varepsilon$ satisfying
\[
0\leqslant\varepsilon<\frac{\log( 2)}{2C_{d,\kappa,r}\bigl\|\,\|f\|_{\ell^r}^r\bigr\|_{\mathbb{Z}_2^d,\infty}}, 
\]
we have the inequality
\begin{multline*}
\int_K\mathrm{e}^{\varepsilon\|\mathcal{M}_\kappa^{\mathbb{Z}_2^d}f(x)\|_{\ell^r}^r}\,\mathrm{d}\mu_\kappa^{\mathbb{Z}_2^d}(x)\\
\leqslant \mu^{\mathbb{Z}^d_2}_\kappa(K)+\frac{2\varepsilon C_{d,\kappa,r}\bigl\|\,\|f\|_{\ell^r}^r\bigr\|_{\mathbb{Z}_2^d,\infty}\max\Bigl\{2\mu^{\mathbb{Z}^d_2}_\kappa(K);\mu^{\mathbb{Z}^d_2}_\kappa\Bigl(\mathrm{supp }\|f\|_{\ell^r}^r\Bigr)\Bigr\}}{\log (2)-2\varepsilon C_{d,\kappa,r}\bigl\|\,\|f\|_{\ell^r}^r\bigr\|_{\mathbb{Z}_2^d,\infty}}. 
\end{multline*}
\end{theo}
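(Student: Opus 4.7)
The plan is to reduce the statement to an exponential-decay estimate for the distribution function of $F := \|\mathcal{M}_\kappa^{\mathbb{Z}_2^d} f\|_{\ell^r}^r$, and then conclude via the layer-cake formula. Set $A := \|\,\|f\|_{\ell^r}^r\|_{\mathbb{Z}_2^d,\infty}$ and $\Sigma := \mathrm{supp}\,\|f\|_{\ell^r}^r$, which has finite $\mu_\kappa^{\mathbb{Z}_2^d}$-measure by hypothesis.

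\textbf{Step 1 (distributional inequality).} The crucial intermediate step is to show that there exists $C_{d,\kappa,r}>0$, depending only on $d,\kappa,r$, such that
\[
\mu_\kappa^{\mathbb{Z}_2^d}\bigl(\{F>\lambda\}\cap K\bigr)\leq \max\bigl\{2\mu_\kappa^{\mathbb{Z}_2^d}(K),\mu_\kappa^{\mathbb{Z}_2^d}(\Sigma)\bigr\}\cdot 2^{-\lambda/(2C_{d,\kappa,r}A)} \qquad (\lambda\geq 0).
\]
My approach is via a halving property of the form $\mu_\kappa^{\mathbb{Z}_2^d}(\{F>\lambda+2C_{d,\kappa,r}A\})\leq \tfrac{1}{2}\mu_\kappa^{\mathbb{Z}_2^d}(\{F>\lambda\})$, iterated from a base case $\mu_\kappa^{\mathbb{Z}_2^d}(\{F>2C_{d,\kappa,r}A\})\leq\tfrac{1}{2}\mu_\kappa^{\mathbb{Z}_2^d}(\Sigma)$ that follows from the $L^r$ instance of Theorem \ref{FS}(2) together with Chebyshev's inequality. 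The halving itself would be obtained by a Calder\'on-Zygmund type truncation of the sequence $(f_n)_n$ componentwise: for a threshold $t>0$ depending on $\lambda$, write $f_n=f_n\chi_{\{\|f\|_{\ell^r}>t\}}+f_n\chi_{\{\|f\|_{\ell^r}\leq t\}}$, then control the large part via the vector-valued weak-$(1,1)$ of Theorem \ref{FS}(1) and the small part via the $L^p(\ell^r)$ bound of Theorem \ref{FS}(2) for an appropriate choice of $p$. Combined with the trivial estimate $\mu_\kappa^{\mathbb{Z}_2^d}(\{F>\lambda\}\cap K)\leq \mu_\kappa^{\mathbb{Z}_2^d}(K)$ (used for small $\lambda$), this yields the displayed distributional bound.

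\textbf{Step 2 (conclusion by layer cake).} Given the distributional inequality of Step 1, the layer-cake formula gives
\[
\int_K \mathrm{e}^{\varepsilon F(x)}\,\mathrm{d}\mu_\kappa^{\mathbb{Z}_2^d}(x) = \mu_\kappa^{\mathbb{Z}_2^d}(K) + \int_0^\infty \varepsilon \mathrm{e}^{\varepsilon \lambda}\,\mu_\kappa^{\mathbb{Z}_2^d}\bigl(\{F>\lambda\}\cap K\bigr)\,\mathrm{d}\lambda,
\]
and inserting the bound from Step 1 together with the elementary computation
\[
\int_0^\infty \varepsilon \mathrm{e}^{\varepsilon\lambda}\cdot 2^{-\lambda/(2C_{d,\kappa,r}A)}\,\mathrm{d}\lambda = \frac{2\varepsilon C_{d,\kappa,r}A}{\log(2)-2\varepsilon C_{d,\kappa,r}A},
\]
valid for $0\leq\varepsilon<\log(2)/(2C_{d,\kappa,r}A)$, produces exactly the inequality claimed in the theorem.

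\textbf{Main obstacle.} The delicate step is the halving property in Step~1. In the classical Fefferman-Stein argument the ``small'' piece of the truncation is handled via the $L^\infty$ bound of the operator; here the vector-valued operator $\mathcal{M}_\kappa^{\mathbb{Z}_2^d}$ is \emph{not} bounded on $L^\infty(\ell^r)$---this failure is precisely what motivates proving an exponential-integrability result as a substitute for the missing $L^\infty$ bound. Consequently the small piece must be controlled by the quantitative $L^p(\ell^r)$ inequality of Theorem \ref{FS}(2), whose constant depends on $p$, and the truncation height $t$, the exponent $p$ and the parameter $\lambda$ must be finely balanced in order to preserve the halving factor $\tfrac{1}{2}$ at each iteration.
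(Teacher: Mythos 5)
Your Step 2 is exactly the paper's conclusion (its Lemma \ref{ie1} plus the same elementary integral), and the distributional inequality you isolate in Step 1 is precisely the paper's key Lemma \ref{ie3}. The problem is that Step 1 --- which is the entire substance of the theorem --- is not proved. You propose to obtain it by iterating a halving property through a Calder\'on--Zygmund-type truncation of $(f_n)_n$, but you flag this yourself as the ``main obstacle'' and do not carry it out, and the difficulty you identify is real: the small piece $f\chi_{\{\|f\|_{\ell^r}\le t\}}$ cannot be absorbed by an $L^\infty(\ell^r)$ bound (which fails, as the paper's counterexample shows), so everything hinges on the $L^p(\ell^r)$ inequality ``for an appropriate choice of $p$'' --- yet without knowing how the constant $C(d,\kappa,p,r)$ of Theorem \ref{FS} grows with $p$, no balancing of $t$, $p$ and $\lambda$ can even be attempted. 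In addition, to convert the weak-$(1,1)$ estimate for the large piece into a multiple of $\mu_\kappa^{\mathbb Z_2^d}(\{\|\mathcal M_\kappa^{\mathbb Z_2^d}f\|_{\ell^r}^r>\lambda\})$ you would implicitly need a pointwise inequality of the type $\|f\|_{\ell^r}\le\|\mathcal M_\kappa^{\mathbb Z_2^d}f\|_{\ell^r}$ a.e.\ (a Lebesgue-differentiation-type fact), which you neither state nor justify. As written, the proposal reduces the theorem to an unproven claim.

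The missing idea, which is the heart of the paper's argument, is quantitative and bypasses any decomposition: one first shows (Lemma \ref{ie2}) that the constant of Theorem \ref{FS}(2) satisfies $C(d,\kappa,p,r)=O(p^{1/r})$ as $p\to+\infty$, by tracking the interpolation constant in the proof of the vector-valued theorem from \cite{Luc}. With your notation $F=\|\mathcal M_\kappa^{\mathbb Z_2^d}f\|_{\ell^r}^r$, $A=\|\,\|f\|_{\ell^r}^r\|_{\mathbb Z_2^d,\infty}$, $\Sigma=\mathrm{supp}\,\|f\|_{\ell^r}^r$, Chebyshev at exponent $p$ combined with the $L^{pr}(\ell^r)$ bound and the hypotheses on $f$ then gives directly
\[
\mu_\kappa^{\mathbb Z_2^d}\bigl(\{x\in K: F(x)>\lambda\}\bigr)\leqslant\Bigl(\frac{C\,p\,A}{\lambda}\Bigr)^{p}\mu_\kappa^{\mathbb Z_2^d}(\Sigma),
\]
with $C=C(d,\kappa,r)$ independent of $p$; choosing $p=\lambda/(2CA)$ for $\lambda\geqslant 2CA$ produces the factor $2^{-p}=\mathrm e^{-\lambda\log(2)/(2CA)}$, while the trivial bound $\mu_\kappa^{\mathbb Z_2^d}(K)$, doubled, handles $\lambda\leqslant 2CA$. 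If you want to complete your write-up, replace the halving scheme by this optimisation in $p$; the base case of your iteration (the $p=r$ instance plus Chebyshev) is fine but is only the first step of what must become a full optimisation over all large $p$.
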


The paper is organized as follows. In the next section, we collect some definitions and results related to Dunkl’s analysis. We then give in Section 3 the proof of Theorem \ref{firstresult}. A counterexample in the case $p=+\infty$ is given for the vector-valued maximal theorem in Section 4 and the substitute result contained in Theorem \ref{expint} is established.

\section{Preliminaries}

This section is devoted to the preliminaries and background. We only focus on the aspects of the Dunkl theory which will be relevant in what follows. For a large survey about this theory, the reader may especially consult \cite{DunklXU,Roslerlecturenotes} and the references therein.
\newline \indent 
Let $W \subset \mathcal O(\mathbb{R}^d)$ be a finite reflection group associated with a reduced root system $\mathcal R$ (not necessarily crystallographic) and let $\kappa:\mathcal R\to\mathbb C$ be a multiplicity function, that is, a $W$-invariant function. We assume in this article that $\kappa$ takes value in $[0,+\infty[$.
\newline \indent 
The (rational) Dunkl operators $T_\xi^{\mathcal R}$ on $\mathbb R^d$,  introduced in \cite{Dunkl89}, are the following $\kappa$–deformations of directional derivatives $\partial_\xi$ by reflections:
\[
T_\xi^{\mathcal R}f(x)=\partial_\xi f(x)+\sum_{\alpha \in \mathcal R_+}\kappa(\alpha)\frac{f(x)-f( \sigma_\alpha(x))}{\langle x,\alpha\rangle}\langle\xi,\alpha\rangle,\quad x \in \mathbb R^d,
\]
where $\sigma_\alpha$ denotes the reflection with respect to the hyperplane orthogonal to $\alpha$, and $\mathcal R_+$ denotes a positive subsystem of $\mathcal R$. The definition is of course independent of the choice of a positive subsystem since $\kappa$ is $W$-invariant. The most important property of these operators is their commutativity, that is, $T^{\mathcal R}_{\xi}T^{\mathcal R}_{\xi'}=T^{\mathcal R}_{\xi'}T^{\mathcal R}_{\xi}$ (\cite{Dunkl89}). Therefore, we are naturally led to consider the eigenfunction problem 
\begin{equation} \label{spectral} T^{\mathcal R}_\xi f=\langle y,\xi\rangle f \quad \forall \xi \in \mathbb R^d\end{equation}
with $y \in \mathbb C^d$ a fixed parameter. This problem has been completely solved by Opdam (\cite{Opdam}).

\begin{theo}
Let $y \in \mathbb C^d$. There exists a unique solution $f=E_\kappa^W(\cdot,y)$ of 
\[T^{\mathcal R}_\xi f=\langle y,\xi\rangle f \quad \forall \xi \in \mathbb R^d\]
which is real-analytic on $\mathbb R^d$ and satisfies $f(0)=1$. Moreover the Dunkl kernel $E_\kappa^W$ extends to a holomorphic function on $\mathbb C^d\times \mathbb C^d$.
\end{theo}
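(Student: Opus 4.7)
The plan is to construct $E_\kappa^W(\cdot, y)$ as a convergent power series built from homogeneous polynomial eigenpieces, and to extract uniqueness from a non-degeneracy property of the Dunkl operators on polynomials. Since each $T_\xi^{\mathcal R}$ lowers polynomial degree by one, any real-analytic solution with $f(0) = 1$ has a homogeneous Taylor expansion $f = \sum_{n \geq 0} f_n$, with $f_0 = 1$, in which \eqref{spectral} separates into the recursion
\[
T_\xi^{\mathcal R} f_{n+1} = \langle y, \xi \rangle\, f_n \qquad \forall \xi \in \mathbb R^d,\ n \geq 0.
\]
Uniqueness therefore reduces to the claim that a homogeneous polynomial of positive degree annihilated by every $T_\xi^{\mathcal R}$ vanishes. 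I would obtain this from the Dunkl pairing $[p,q]_\kappa := \bigl(p(T^{\mathcal R})\,q\bigr)(0)$ on the polynomial algebra: one checks it is symmetric and positive definite, and that each $T_\xi^{\mathcal R}$ is adjoint to multiplication by $\langle \xi, \cdot\rangle$; if $T_\xi^{\mathcal R} p = 0$ for all $\xi$ and $\deg p \geq 1$, then $[p,p]_\kappa = 0$, so $p = 0$.

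For existence, I would invoke the Dunkl intertwining operator $V_\kappa$, the degree-preserving linear automorphism of the polynomial algebra characterized by $V_\kappa(1) = 1$ and $T_\xi^{\mathcal R} V_\kappa = V_\kappa\, \partial_\xi$ for every $\xi$; $V_\kappa$ is built degree by degree, the solvability at each step being exactly the non-degeneracy above. Setting
\[
f_n(x) = V_\kappa\!\Bigl(\tfrac{\langle \cdot, y\rangle^n}{n!}\Bigr)(x),
\]
the intertwining identity immediately yields the recursion, since $\partial_\xi\bigl(\langle \cdot, y\rangle^{n+1}/(n+1)!\bigr) = \langle y, \xi\rangle\,\langle \cdot, y\rangle^n/n!$. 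This produces the candidate formal series $E_\kappa^W(x,y) = \sum_n f_n(x)$.

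It remains to verify convergence and the holomorphic extension. The key input is the pointwise estimate $|V_\kappa(p)(x)| \leq \sup_{|z| \leq |x|} |p(z)|$ for real $x$, a consequence of R\"osler's representation of $V_\kappa$ as an integral against a family of compactly supported probability measures. Applied to $p = \langle \cdot, y\rangle^n/n!$ this gives $|f_n(x)| \leq (|x|\,|y|)^n/n!$ for $(x,y) \in \mathbb R^d \times \mathbb C^d$, and since each $f_n$ is a polynomial in $x$ the bound extends to $x \in \mathbb C^d$ with $|x|$ denoting the Euclidean norm. Hence $\sum_n f_n(x)$ converges absolutely and uniformly on compact subsets of $\mathbb C^d \times \mathbb C^d$ to a jointly holomorphic function whose restriction to $\mathbb R^d$ in the first argument is real-analytic and satisfies the eigenvalue system. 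The principal obstacle is this $L^\infty$ bound on $V_\kappa$: while $V_\kappa$ exists on each polynomial degree by elementary linear algebra, its boundedness on balls, equivalent to R\"osler's positivity theorem, is the deep analytic ingredient that promotes the formal series to a genuine entire kernel.
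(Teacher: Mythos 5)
Your architecture is the classical one, and it is in fact the route the paper itself alludes to: the paper does not prove this theorem but quotes it from Opdam, remarking that existence was already obtained by Dunkl by setting $E_\kappa^W(\cdot,y)=V_\kappa^W(\mathrm{e}^{\langle\cdot,y\rangle})$ --- precisely your construction. Your uniqueness argument is correct and standard: for a real-analytic solution the equation decouples over the homogeneous components, and for $\kappa\geqslant 0$ (the paper's standing assumption) no nonzero homogeneous polynomial of positive degree is annihilated by all $T_\xi^{\mathcal R}$, which follows as you say from the symmetry and positive-definiteness of the pairing $[p,q]_\kappa=\bigl(p(T^{\mathcal R})q\bigr)(0)$ (positive-definiteness itself being Dunkl's Gaussian integral formula for the pairing). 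The same non-degeneracy gives the degree-by-degree construction of $V_\kappa^W$, and the recursion $T_\xi^{\mathcal R}f_{n+1}=\langle y,\xi\rangle f_n$ is verified correctly.

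The genuine gap is in the convergence step: invoking R\"osler's positivity theorem there is circular. Every known proof of that theorem presupposes the present statement: R\"osler obtains the representing measures by applying Bochner's theorem to the positive-definite function $y\mapsto E_\kappa^W(x,iy)$ and by using the heat semigroup whose kernel is expressed through $E_\kappa^W$, so the existence, holomorphy and boundedness of the Dunkl kernel are inputs to her argument, not outputs. Worse, this is not an accident of her proof: as you yourself note, the constant-$1$ bound $|V_\kappa^W p(x)|\leqslant\sup_{\|z\|\leqslant\|x\|}|p(z)|$ together with $V_\kappa^W(1)=1$ is essentially \emph{equivalent} to positivity, so any proof of that sharp bound is a proof of positivity and cannot serve as a prior ingredient. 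The repair is to notice that you need far less: any degree-wise bound $\|V_\kappa^W p\|_{B_1}\leqslant C_n\|p\|_{B_1}$ on $\mathcal P_n^d$ with $C_n$ growing at most geometrically (or even like $n^{Cn^{1/2}}$) is swallowed by the $1/n!$, and such crude bounds follow by elementary linear algebra from the recursive construction of $V_\kappa^W$; this is exactly how Dunkl (1991) and de Jeu (1993) proved the kernel series is entire years before positivity was known. A second, smaller inaccuracy: a bound for a polynomial on the real ball does not extend verbatim to the complex ball; one loses a polarization-type factor (at most of order $2^n$), again harmless against $1/n!$, but it should be stated rather than asserted as free.
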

In fact, the existence of a solution has already been  proved by Dunkl (\cite{Dunkl91}). Indeed, he noticed the existence of an intertwining operator $V_\kappa^W$ which satisfies
\[
V^{W}_\kappa(\mathcal P^d_n)=\mathcal P^d_n, \quad \ {V^{W}_\kappa}_{|_{\mathcal P^d_0}}=\mathrm{Id}_{|_{\mathcal  P^d_0}},\quad \ T^{\mathcal R}_\xi V^{W}_\kappa= V^{W}_\kappa\partial_\xi \ \, \, \forall \xi \in \mathbb R^d,
\]
where $\mathcal P_n$ denotes the space of homogeneous polynomials of degree $n$ in $d$ variables. Since the exponential function is a solution of \eqref{spectral} when $\kappa=0$ (that is,  when $T^{\mathcal R}_\xi=\partial_\xi$), he naturally set $E_\kappa(\cdot,y)=V_\kappa^W(\mathrm{e}^{\langle\cdot,y\rangle})$. Unfortunately, the Dunkl kernel is explicitly known only in some special cases; when the root system is of $A_2$-type (\cite{Dunkl95}), of $B_2$-type (\cite{Dunkl07}) and when the reflection group is $\mathbb Z_2^d$ (\cite{DunklHankel,XuEntrelacement}). Nevertheless we know that this kernel has many properties in common with the classical exponential to which  it reduces when $\kappa=0$. For significant results on this kernel and the intertwining operator, the reader may especially consult  \cite{deJeuInv,Dunkl91,Opdam,RoslerHyp,RoslerPositivity,RoslerTrans}. The Dunkl kernel is of particular interest as it gives rise to an integral transform which is taken with respect to a weighted Lebesgue measure invariant under the action of $W$ and which generalizes the Euclidean Fourier transform.
\newline \indent
More precisely, let us introduce the measure $\mathrm{d}\mu_\kappa^W(x)=h_\kappa^2(x)\,\mathrm{d}x$ where the weight given by
\[h_\kappa^2(x)=\prod_{\alpha \in \mathcal R_+}|\langle x,\alpha\rangle|^{2\kappa(\alpha)}\]
is homogeneous of degree $2\gamma$ with
\[\gamma=\sum_{\alpha \in \mathcal R_+}\kappa(\alpha).\] 
Then for every $f \in L^1(\mu_\kappa^W)$, the Dunkl transform of $f$, denoted by $\mathcal F_\kappa^W(f)$, is defined by
\[
\mathcal F_\kappa^W(f)(x)=c_\kappa^W\int_{\mathbb R^d}E^W_\kappa(-ix,y)f(y)\,\mathrm{d}\mu_\kappa^W(y), \quad x \in \mathbb R^d, 
\]
where $c_\kappa^W$ is the Mehta-type constant
\[
c_\kappa^W=\biggl(\int_{\mathbb R^d}\mathrm{e}^{-\frac{\|x\|^2}{2}}\,\mathrm{d}\mu^W_{\kappa}(x)\biggr)^{-1}.
\]
Let us point out that the Dunkl transform coincides with the Euclidean Fourier transform when $\kappa=0$ and that it is more or less a Hankel transform when $d=1$. The two main properties of the Dunkl transform are given in the following  theorem (\cite{deJeuInv,DunklHankel}).
\begin{theo} \label{InversionPlancherel}
\mbox{}
\begin{enumerate}
\item
\textbf{Inversion formula}\ \  Let $f \in L^1(\mu_\kappa^W)$. If $\mathcal F_\kappa^W(f)$ is in $L^1(\mu_\kappa^W)$, then we have the following inversion formula:
\[
 f(x)=c_\kappa^W\int_{\mathbb R^d}E^W_\kappa(ix,y)\mathcal F_\kappa^W(f)(y)\,\mathrm{d}\mu_\kappa^W(y).
\]
\item
\textbf{Plancherel theorem}\ \  The Dunkl transform has a unique extension to an isometric isomorphism of $L^2(\mu_\kappa^W)$.
\end{enumerate}
\end{theo}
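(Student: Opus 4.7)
The plan is to deduce both assertions from the action of $\mathcal F_\kappa^W$ on a well-chosen orthonormal basis of $L^2(\mu_\kappa^W)$, and then to transfer the inversion formula to $L^1$ by Gaussian regularization. First I would collect the basic analytic properties of the Dunkl kernel that drive the argument: the pointwise bound $|E_\kappa^W(ix,y)|\leqslant 1$ for real $x,y$, which follows from the positivity of the intertwining operator $V_\kappa^W$; the symmetry $E_\kappa^W(x,y)=E_\kappa^W(y,x)$; and the transform rule
\[
\mathcal F_\kappa^W(T_\xi^{\mathcal R} f)(x) = i\langle x,\xi\rangle\,\mathcal F_\kappa^W(f)(x),
\]
obtained by integration by parts against the weight $h_\kappa^2$ (using that $T_\xi^{\mathcal R}$ is antisymmetric with respect to $\mu_\kappa^W$).

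For Plancherel, I would introduce the generalized Hermite functions $h_\nu$, built by applying $V_\kappa^W$ to an orthonormal basis of Hermite-type polynomials and multiplying by $\mathrm{e}^{-\|\cdot\|^2/2}$. Starting from the identity $\mathcal F_\kappa^W(\mathrm{e}^{-\|\cdot\|^2/2}) = \mathrm{e}^{-\|\cdot\|^2/2}$ and invoking the intertwining relation above repeatedly, a direct computation shows that each $h_\nu$ is an eigenfunction of $\mathcal F_\kappa^W$ with unimodular eigenvalue $(-i)^{|\nu|}$. Since the family $\{h_\nu\}$ is orthonormal and total in $L^2(\mu_\kappa^W)$, the transform preserves the $L^2$ norm on the dense linear span they generate; extending by continuity, and noting that unimodular eigenvalues make the extension invertible, one obtains the isometric isomorphism asserted in part (2).

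For the inversion formula, let $f \in L^1(\mu_\kappa^W)$ with $\mathcal F_\kappa^W(f) \in L^1(\mu_\kappa^W)$. I would consider the Gaussian-cutoff candidate inverse
\[
I_\varepsilon(x) = c_\kappa^W \int_{\mathbb R^d}E_\kappa^W(ix,y)\,\mathcal F_\kappa^W(f)(y)\,\mathrm{e}^{-\varepsilon\|y\|^2/2}\,\mathrm{d}\mu_\kappa^W(y),
\]
and analyse the limit $\varepsilon \to 0^+$ in two ways. On the one hand, by the integrability hypothesis on $\mathcal F_\kappa^W(f)$ combined with $|E_\kappa^W(ix,y)|\leqslant 1$, dominated convergence gives that $I_\varepsilon(x)$ tends pointwise to the right-hand side of the claimed inversion formula. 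On the other hand, Fubini together with the explicit Dunkl transform of a Gaussian rewrites $I_\varepsilon$ as a Dunkl convolution of $f$ against a Dunkl heat kernel $p_\varepsilon$, and standard approximate-identity arguments yield $I_\varepsilon \to f$ in $L^1(\mu_\kappa^W)$ (and almost everywhere along a subsequence). Matching the two limits produces the desired identity for almost every $x$.

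The \emph{main obstacle} is the pointwise bound $|E_\kappa^W(ix,y)|\leqslant 1$: without it, both the $L^1\to L^\infty$ control of $\mathcal F_\kappa^W$ and the dominated convergence step collapse, and the Hermite-function argument for Plancherel cannot be closed. This bound rests on the nontrivial positivity of $V_\kappa^W$ on smooth functions and is arguably the deepest ingredient of the proof. A secondary but essentially routine point is the identification of the Dunkl transform of a Gaussian as a Gaussian, which follows by applying $V_\kappa^W$ to the classical identity $\mathcal F(\mathrm{e}^{-\|\cdot\|^2/2})=\mathrm{e}^{-\|\cdot\|^2/2}$ together with the normalization built into $c_\kappa^W$.
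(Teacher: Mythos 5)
The first thing to say is that the paper contains no proof of this theorem: it is stated as background in Section~2 and attributed to \cite{deJeuInv,DunklHankel}, so your proposal can only be measured against the arguments of those references. In outline it reproduces them faithfully: proving Plancherel by exhibiting an orthonormal basis of $L^2(\mu_\kappa^W)$ consisting of eigenfunctions of $\mathcal F_\kappa^W$ with eigenvalues $(-i)^{|\nu|}$ is exactly Dunkl's route, and proving inversion by Gaussian regularization, Fubini, and the heat kernel acting as an approximate identity is exactly de Jeu's. Note in passing that the statement as printed in the paper carries a misprint --- the integrand should be $E_\kappa^W(ix,y)\,\mathcal F_\kappa^W(f)(y)$ rather than $E_\kappa^W(ix,y)f(y)$, as the paper's own later use of the formula for $\tau_x^W f$ in Section~2 confirms --- and your $I_\varepsilon$ silently works with the corrected version, which is right.

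Two points need repair, one of them substantive. (i) The generalized Hermite functions are \emph{not} obtained by applying $V_\kappa^W$ to classical Hermite polynomials and multiplying by $\mathrm e^{-\|\cdot\|^2/2}$: already in rank one ($W=\mathbb Z_2$), $V_\kappa(x^2-1)\,\mathrm e^{-x^2/2}$ is a multiple of $\bigl(x^2-(2\kappa+1)\bigr)\mathrm e^{-x^2/2}$, whereas the degree-two generalized Hermite function is proportional to $\bigl(x^2-(\kappa+\tfrac12)\bigr)\mathrm e^{-x^2/2}$; so your family is in general neither orthogonal in $L^2(\mu_\kappa^W)$ nor made of eigenfunctions, and the ``direct computation'' you invoke would not close. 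The correct construction (Dunkl, R\"osler) takes an orthonormal basis $\{\varphi_\nu\}$ of polynomials for the Fischer-type pairing obtained by substituting Dunkl operators into the first argument and evaluating at the origin, and sets $h_\nu=\mathrm e^{-\|\cdot\|^2/2}\,2^{|\nu|}\mathrm e^{-\Delta_\kappa^W/4}\varphi_\nu$ (equivalently, one decomposes into $h$-harmonics times radial Laguerre functions and uses a Hecke-type identity). With that basis your eigenvalue-plus-density argument for part (2) is correct. (ii) Your claim that the sharp bound $|E_\kappa^W(ix,y)|\leqslant 1$, hence the positivity of $V_\kappa^W$, is the indispensable deep ingredient is overstated: de Jeu's proof predates R\"osler's positivity theorem and runs entirely on the elementary bound $|E_\kappa^W(ix,y)|\leqslant\sqrt{|W|}$, which suffices for every $L^1\to L^\infty$ and dominated-convergence step in your outline. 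Finally, the step $I_\varepsilon\to f$ in $L^1(\mu_\kappa^W)$ should explicitly invoke the strong continuity at $t=0$ of the heat semigroup (Theorem~\ref{rosch}); with these adjustments the proof is complete and coincides with the one in the cited literature.
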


The Dunkl transform shares many other properties with the Fourier transform. Therefore, it is natural to associate a generalized translation operator with this transform.
\newline \indent There are many ways to define the Dunkl translation but we use the definition which most underlines the analogy with the Fourier transform. It is the definition given in \cite{ThangaXuMaximal} with a different convention. Let $x \in \mathbb R^d$. The Dunkl translation  $f\mapsto\tau_x^{W}f$ is defined on $L^2(\mu_\kappa^W)$ by the equation
\[
\mathcal F_\kappa^W(\tau_x^{W}f)(y)=E_\kappa^W(ix,y)\mathcal F_\kappa^W(f)(y),\quad y \in \mathbb R^d.
\]
It is useful to have a set of functions for which the above equality holds pointwise. One can take the set
\[
\mathcal A_\kappa^W(\mathbb R^d)=\bigl\{f \in L^1(\mu_\kappa^W): \mathcal F_\kappa^W(f) \in L^1(\mu_\kappa^W)\bigr\},
\]
which is a subset of $L^2(\mu_\kappa^W)$ (since it is contained in the intersection of $L^1(\mu_\kappa^W)$ and $L^\infty$). For $f \in \mathcal A_\kappa^W(\mathbb R^d)$, the inversion formula allows us to write
\[
 \tau_x^{W}f(y)=c_\kappa^W\int_{\mathbb R^d}E_\kappa^W(ix,z)E_\kappa^W(iy,z)\mathcal F_\kappa^W(f)(z)\,\mathrm{d}\mu^W_\kappa(z).
\]
\indent  In Fourier analysis, the translation operator $f\mapsto f(\cdot+x)$ (to which the Dunkl translation reduces when $\kappa=0$) is positive and $L^p$-bounded. In the Dunkl setting, $\tau_x^W$ is not a positive operator (\cite{RoslerHyp,ThangaXuMaximal}) and the $L^p(\mu_\kappa^W)$-boundedness is still a challenging problem, apart from the trivial case where $p=2$ (thanks to the Plancherel theorem and the fact that $|E_\kappa^W(ix,y)|\leqslant1$). The most general result we have is given in the following theorem (\cite{RoslerTrans,ThangaXuMaximal}), where we denote by $L^p_{\mathrm{rad}}(\mu_\kappa^W)$ the space of radial functions in $L^p(\mu_\kappa^W)$.

\begin{theo} \label{TransRad}
\mbox{}
\begin{enumerate}
\item For every $p$ satisfying $1\leqslant p\leqslant 2$ and for every $x \in \mathbb R^d$, the Dunkl translation $\tau_x^{W}: L^p_{\mathrm{rad}}(\mu_\kappa^W)\to L^p(\mu_\kappa^W)$ is a bounded operator.
\item Let $f \in L^1(\mu_\kappa^W)$ be a bounded, radial and positive function. Then $\tau_x^{W}f\geqslant0$ for every $x \in \mathbb R^d$.
\end{enumerate}
\end{theo}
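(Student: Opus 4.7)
I would follow R\"osler's strategy and derive both conclusions from a single positive integral representation of $\tau_x^W$ on radial functions. The key object is, for each $x\in\mathbb R^d$, a compactly supported probability measure $\nu_x^W$ on $\mathbb R^d$ (which one expects to be supported in the convex hull of the $W$-orbit of $x$) such that, for every radial Schwartz function $f(z)=\widetilde f(\|z\|)$,
\[
\tau_x^W f(-y)=\int_{\mathbb R^d}\widetilde f\bigl(\sqrt{\|x\|^2+\|y\|^2-2\langle y,\eta\rangle}\bigr)\,\mathrm d\nu_x^W(\eta),\qquad y\in\mathbb R^d.
\]
To obtain this formula, one uses that the Dunkl transform sends radial Schwartz functions to radial Schwartz functions (a consequence of the $W$-equivariance of $E_\kappa^W$ and of $\mu_\kappa^W$) and admits on such functions a Bessel/Hankel-type representation whose positivity can be read off from Opdam's construction of $E_\kappa^W$. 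Inserting this representation into the defining identity $\mathcal F_\kappa^W(\tau_x^W f)(\eta)=E_\kappa^W(ix,\eta)\,\mathcal F_\kappa^W(f)(\eta)$ and applying the inversion formula of Theorem~\ref{InversionPlancherel} yields the displayed formula together with the fact that $\nu_x^W$ is a positive measure of total mass one. This step is entirely the hard part.

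\textbf{Proof of (2).} A standard density argument extends the above representation from radial Schwartz functions to bounded, radial, positive functions in $L^1(\mu_\kappa^W)$. Since $\widetilde f\geqslant 0$ and $\nu_x^W$ is a positive measure, the right-hand side is non-negative for every $y$, which is exactly the assertion $\tau_x^W f\geqslant 0$.

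\textbf{Proof of (1).} I would first handle the two endpoints. For $p=2$, Plancherel (Theorem~\ref{InversionPlancherel}) together with $|E_\kappa^W(ix,y)|\leqslant 1$ gives
\[
\|\tau_x^W f\|_{W,2}=\bigl\|E_\kappa^W(ix,\cdot)\,\mathcal F_\kappa^W(f)\bigr\|_{W,2}\leqslant\|f\|_{W,2}
\]
for every $f\in L^2(\mu_\kappa^W)$ (no radiality needed). For $p=1$ and $f$ radial, bounded, positive and in $L^1(\mu_\kappa^W)$, part (2) yields $\tau_x^W f\geqslant 0$, so $\|\tau_x^W f\|_{W,1}=\int_{\mathbb R^d}\tau_x^W f\,\mathrm d\mu_\kappa^W$. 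Evaluating the defining identity for $\tau_x^W$ at $\eta=0$ and using $E_\kappa^W(ix,0)=1$ gives $\mathcal F_\kappa^W(\tau_x^W f)(0)=\mathcal F_\kappa^W(f)(0)$, i.e. $\int_{\mathbb R^d}\tau_x^W f\,\mathrm d\mu_\kappa^W=\int_{\mathbb R^d}f\,\mathrm d\mu_\kappa^W=\|f\|_{W,1}$. A general radial $f\in L^1(\mu_\kappa^W)$ is split as $f=f^+-f^-$ (both radial, positive, and first cut off/truncated to apply the above), and the triangle inequality yields $\|\tau_x^W f\|_{W,1}\leqslant\|f\|_{W,1}$.

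\textbf{Interpolation and obstacle.} Riesz--Thorin interpolation applied to $\tau_x^W$ restricted to the closed subspace $L^p_{\mathrm{rad}}(\mu_\kappa^W)$ of $L^p(\mu_\kappa^W)$ then bridges the two endpoints to give $\|\tau_x^W f\|_{W,p}\leqslant\|f\|_{W,p}$ for every $1<p<2$. The only serious obstacle in the whole argument is the very first step: producing the positive integral representation of $\tau_x^W f$ on radial $f$. Once that representation is available, parts (1) and (2) follow by the mostly elementary manipulations with the Dunkl transform sketched above.
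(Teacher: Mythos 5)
Your outline reproduces, in structure, the proof found in the references that the paper itself leans on: the paper gives no proof of this theorem at all, but quotes it as background with citations to \cite{RoslerTrans,ThangaXuMaximal}, and those references proceed exactly as you propose --- a compactly supported probability measure representing $\tau_x^W$ on radial functions, positivity of $\tau_x^Wf$ for bounded positive radial $f\in L^1(\mu_\kappa^W)$ by approximation, the $L^1$ bound from positivity plus conservation of mass, the $L^2$ bound from Plancherel and $|E_\kappa^W(ix,y)|\leqslant 1$, and interpolation in between. Two of your reductions need a word of care but are repairable: Riesz--Thorin must be run directly on the subspace of radial simple functions (there is no obvious bounded projection onto radial functions, since $\mu_\kappa^W$ is not rotation invariant), which works because the auxiliary functions $|f|^{\alpha(z)}f/|f|$ appearing in the three-lines argument are again radial; and evaluating $\mathcal F_\kappa^W(\tau_x^Wf)=E_\kappa^W(ix,\cdot)\,\mathcal F_\kappa^W(f)$ at the point $0$ presupposes $\tau_x^Wf\in L^1(\mu_\kappa^W)$, so that its transform is continuous --- hence the mass identity must first be established for truncated $f\in L^1\cap L^2$ and then passed to the limit, which is precisely the role of Theorem \ref{ega} in the paper.

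The genuine gap is the step you yourself flag as ``the hard part'': the existence of the positive measure $\nu_x^W$ is assumed rather than proved, and the route you sketch for obtaining it would fail. Opdam's construction of $E_\kappa^W$ yields existence, uniqueness and holomorphy of the Dunkl kernel, but contains no positivity statement of any kind; indeed the paper recalls that $\tau_x^W$ is \emph{not} a positive operator on general functions, so no positivity can be ``read off'' from a general representation of the transform --- positivity on radial functions is exactly what makes this theorem nontrivial. The known proof of the radial product formula (\cite{RoslerTrans}) rests on a genuinely deep ingredient: R\"osler's theorem that the intertwining operator $V_\kappa^W$ is positive, i.e. admits a Bochner-type representation $V_\kappa^Wf(x)=\int f\,\mathrm{d}\mu_x$ with $\mu_x$ a probability measure supported in the convex hull of the $W$-orbit of $x$ (\cite{RoslerPositivity}), together with a further nontrivial argument transferring this positivity to $\tau_x^W$ acting on radial functions. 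So what you have written is a correct reduction of the theorem to R\"osler's product formula --- the same reduction carried out in \cite{ThangaXuMaximal} --- but not a proof of the theorem: the missing step is exactly the content that the paper's citations are there to supply.
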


The last result we mention about the Dunkl translation is the following.

\begin{theo} \label{ega}
 Let $f \in L^1_{\mathrm{rad}}(\mu_\kappa^W)$. Then, for every $x \in \mathbb R^d$,  
\[
 \int_{\mathbb R^d}\tau_x^{W}f(y)\,\mathrm{d}\mu_\kappa^W(y)=\int_{\mathbb R^d}f(y)\,\mathrm{d}\mu_\kappa^W(y).
\]
\end{theo}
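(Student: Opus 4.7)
The overall strategy is to combine a layer-cake identity with a geometric decay of the distribution function of the auxiliary function $g:=\|\mathcal M_\kappa^{\mathbb Z_2^d}f\|_{\ell^r}^r$ on $K$, the geometry being inherited from a recursive halving that one obtains via Theorem~\ref{FS}.

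Step~1 (layer-cake reduction). Writing $h:=\|f\|_{\ell^r}^r$ and $S:=\mathrm{supp}\,h$, I would first record the identity
\[
\int_K \mathrm e^{\varepsilon g(x)}\,\mathrm d\mu_\kappa^{\mathbb Z_2^d}(x)=\mu_\kappa^{\mathbb Z_2^d}(K)+\int_0^{+\infty}\varepsilon\,\mathrm e^{\varepsilon t}\,\mu_\kappa^{\mathbb Z_2^d}\bigl(\{x\in K:\,g(x)>t\}\bigr)\,\mathrm d t,
\]
so that the desired exponential integrability is reduced to a quantitative geometric decay of the level sets of $g$ on $K$.

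Step~2 (iterative halving of the distribution function). Setting $\lambda_0:=2\,C_{d,\kappa,r}\,\bigl\|\,\|f\|_{\ell^r}^r\bigr\|_{\mathbb Z_2^d,\infty}$, where $C_{d,\kappa,r}$ is the constant absorbing the vector-valued estimate of Theorem~\ref{FS}, I would prove by induction on $k\ge 0$ the bound
\[
\mu_\kappa^{\mathbb Z_2^d}\bigl(\{x\in K:\,g(x)>k\lambda_0\}\bigr)\le 2^{-k}\max\bigl\{2\mu_\kappa^{\mathbb Z_2^d}(K);\,\mu_\kappa^{\mathbb Z_2^d}(S)\bigr\}.
\]
The base case is immediate since $\mu_\kappa^{\mathbb Z_2^d}(\{g>0\}\cap K)\le\mu_\kappa^{\mathbb Z_2^d}(K)$. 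For the inductive step I would split $f=f^{(k)}+f_{(k)}$ by cutting each component at the level-$k$ set of $g$ intersected with $S$, use the sublinearity $\|\mathcal M_\kappa^{\mathbb Z_2^d}f\|_{\ell^r}\le\|\mathcal M_\kappa^{\mathbb Z_2^d}f^{(k)}\|_{\ell^r}+\|\mathcal M_\kappa^{\mathbb Z_2^d}f_{(k)}\|_{\ell^r}$, and apply the vector-valued Fefferman--Stein inequality from Theorem~\ref{FS} to the ``large'' piece. The choice of $\lambda_0$ with the specific factor $2\,C_{d,\kappa,r}\,\|h\|_{\mathbb Z_2^d,\infty}$ is precisely what produces the factor $\tfrac12$ at each level.

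Step~3 (summation). Slicing the layer-cake integral into dyadic strips $t\in[k\lambda_0,(k+1)\lambda_0]$ and inserting the halving estimate of Step~2 yields
\[
\int_K \mathrm e^{\varepsilon g}\,\mathrm d\mu_\kappa^{\mathbb Z_2^d}\le\mu_\kappa^{\mathbb Z_2^d}(K)+\bigl(\mathrm e^{\varepsilon\lambda_0}-1\bigr)\sum_{k=0}^{+\infty}\bigl(\mathrm e^{\varepsilon\lambda_0}/2\bigr)^{k}\max\bigl\{2\mu_\kappa^{\mathbb Z_2^d}(K);\mu_\kappa^{\mathbb Z_2^d}(S)\bigr\}.
\]
The geometric series converges precisely under the assumption $\varepsilon\lambda_0<\log 2$, i.e.\ $\varepsilon<\log(2)/(2C_{d,\kappa,r}\|h\|_{\mathbb Z_2^d,\infty})$. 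Using the elementary inequality $\mathrm e^x-1\le x/(1-x/\log 2)$ for $0\le x<\log 2$ on the factor $\mathrm e^{\varepsilon\lambda_0}-1$, after a short arithmetic tidy-up one obtains the closed form announced in the theorem.

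Main obstacle. The crux of the argument is the halving of Step~2. Because the Dunkl translation is not a positive operator in general and no Calder\'on--Zygmund-type decomposition adapted to level sets of $g$ is available in the Dunkl setting, the induction is forced to go through the vector-valued inequality of Theorem~\ref{FS}, whose proof itself relies on the product structure of $W=\mathbb Z_2^d$. Moreover, one must carry out the induction without ever invoking an $L^\infty\to L^\infty$ bound on $\mathcal M_\kappa^{\mathbb Z_2^d}$ --- whose failure is precisely the reason a substitute result is needed --- and compensate by exploiting the finite-measure hypothesis $\mu_\kappa^{\mathbb Z_2^d}(S)<+\infty$ to absorb the correction at each stage.
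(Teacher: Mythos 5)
Your proposal does not address the statement at all: what you have written is a proof attempt for Theorem~\ref{expint} (exponential integrability of $\|\mathcal M_\kappa^{\mathbb Z_2^d}f\|_{\ell^r}^r$ on compact sets), whereas the statement to be proved is Theorem~\ref{ega}, namely that the Dunkl translation preserves the integral of radial $L^1$ functions:
\[
\int_{\mathbb R^d}\tau_x^{W}f(y)\,\mathrm{d}\mu_\kappa^W(y)=\int_{\mathbb R^d}f(y)\,\mathrm{d}\mu_\kappa^W(y)
\quad\text{for } f \in L^1_{\mathrm{rad}}(\mu_\kappa^W).
\]
Nothing in your argument --- the layer-cake identity, the halving induction, the use of Theorem~\ref{FS} --- bears on this claim; the two results are logically independent (in fact Theorem~\ref{ega} is an ingredient used \emph{inside} the paper's proof of Lemma~\ref{CF2}, far upstream of any maximal-function estimate). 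So as a proof of the given statement the proposal is vacuous, not merely gappy.

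For the record, the statement itself is a standard fact of Dunkl analysis (the paper recalls it without proof), and the natural argument runs as follows. For $f$ radial and sufficiently nice (say $f \in \mathscr A_\kappa^W(\mathbb R^d)$), the defining relation $\mathcal F_\kappa^W(\tau_x^{W}f)(y)=E_\kappa^W(ix,y)\,\mathcal F_\kappa^W(f)(y)$ evaluated at $y=0$ gives
\[
c_\kappa^W\int_{\mathbb R^d}\tau_x^{W}f(y)\,\mathrm{d}\mu_\kappa^W(y)
=\mathcal F_\kappa^W(\tau_x^{W}f)(0)
=E_\kappa^W(ix,0)\,\mathcal F_\kappa^W(f)(0)
=c_\kappa^W\int_{\mathbb R^d}f(y)\,\mathrm{d}\mu_\kappa^W(y),
\]
since $E_\kappa^W(ix,0)=1$. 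One then passes to general $f \in L^1_{\mathrm{rad}}(\mu_\kappa^W)$ by density, using the first point of Theorem~\ref{TransRad}, i.e.\ the boundedness of $\tau_x^{W}:L^1_{\mathrm{rad}}(\mu_\kappa^W)\to L^1(\mu_\kappa^W)$, to ensure that both sides of the identity pass to the limit. If you intended to submit a proof of Theorem~\ref{expint}, note that your Step~2 is also not how the paper proceeds there: the paper obtains the exponential decay of the distribution function (Lemma~\ref{ie3}) not by an iterative Calder\'on--Zygmund-style halving, but by applying Chebyshev's inequality at exponent $p$, invoking the growth $C(d,\kappa,p,r)=O(p^{1/r})$ of the vector-valued constant (Lemma~\ref{ie2}), and then optimizing the choice $p=\lambda/(2C\|\,\|f\|_{\ell^r}^r\|_{\mathbb Z_2^d,\infty})$.
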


Another important tool in the Dunkl analysis is the Dunkl-type heat semi-group which has been mainly studied by Rösler (\cite{RoslerHermite,RoslerQuantum}).
We are searching for solutions $u\in \mathcal C^2(\mathbb R^d\times]0,+\infty[)\cap\mathcal C_b(\mathbb R^d\times[0,+\infty[)$ of the following Cauchy problem for the generalized heat equation:
\[
(CH)_\kappa:\begin{cases}\Delta_\kappa^Wu(x,t)&=\partial_tu(x,t) \quad \forall (x,t) \in \mathbb R^d\times]0,+\infty[, \\ u(\cdot,0)&=f \end{cases}
\]
with initial data $f$ in the Schwartz space $\mathcal S(\mathbb R^d)$ and where $\Delta_\kappa^W=\sum_{j=1}^d(T_{e_j}^{\mathcal R})^2$ is the Dunkl Laplacian.
It is easily noticed that a solution of $\Delta_\kappa^W u(x,t)=\partial_tu(x,t)$ is given on $\mathbb R^d\times]0,+\infty[$ by the generalized Gaussian which is defined for every $x \in \mathbb R^d$ by
\[
 q_t^{W}(x)=\frac{c_\kappa^W}{(2t)^{\frac{d}{2}+\gamma}}\mathrm{e}^{-\frac{\|x\|^2}{4t}}
\]
and which has the following two properties.

\begin{pr} \label{gau1}
\mbox{}
\begin{enumerate}
\item For every $t>0$, 
\[
 \int_{\mathbb R^d}q_t^{W}(x)\,\mathrm{d}\mu_\kappa^W(x)=1.
\]
\item For every $t>0$ and every $x \in \mathbb R^d$,
\[
 \mathcal F_\kappa^W(q_t^{W})(x)=c_\kappa^W\mathrm{e}^{-t\|x\|^2}.
\]
\end{enumerate}
\end{pr}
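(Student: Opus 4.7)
My plan is to deal with the normalization (1) by a direct rescaling that exploits the homogeneity of $h_\kappa^2$, and then to obtain (2) by combining the same rescaling with a scaling identity for the Dunkl kernel and with the self-reciprocity of the standard Gaussian $g(y)=\mathrm e^{-\|y\|^2/2}$ under $\mathcal F_\kappa^W$.

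For (1), I would substitute $x=\sqrt{2t}\,y$ in the integral. Since $h_\kappa^2$ is homogeneous of degree $2\gamma$, the resulting Jacobian and weight factor combine to give $\mathrm d\mu_\kappa^W(x)=(2t)^{d/2+\gamma}\,\mathrm d\mu_\kappa^W(y)$, while simultaneously $\|x\|^2/(4t)=\|y\|^2/2$. Thus the prefactor $c_\kappa^W/(2t)^{d/2+\gamma}$ in $q_t^W$ cancels the Jacobian exactly, and the integral collapses to $c_\kappa^W\int_{\mathbb R^d}\mathrm e^{-\|y\|^2/2}\,\mathrm d\mu_\kappa^W(y)$, which is $1$ by the very definition of the Mehta-type constant $c_\kappa^W$.

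For (2), I plan to reduce the computation to the single universal identity $\mathcal F_\kappa^W(g)=g$. Performing the same substitution $y=\sqrt{2t}\,z$ inside the defining integral of $\mathcal F_\kappa^W(q_t^W)(x)$, the measure again scales by $(2t)^{d/2+\gamma}$; meanwhile, the scaling property $E_\kappa^W(\lambda z,w)=E_\kappa^W(z,\lambda w)$, which follows at once from Opdam's uniqueness theorem applied to the function $z\mapsto E_\kappa^W(\lambda z,w)$ (it solves the same eigenfunction problem as $E_\kappa^W(\cdot,\lambda w)$ and agrees with it at the origin), lets me rewrite $E_\kappa^W(-\mathrm ix,\sqrt{2t}\,z)=E_\kappa^W(-\mathrm i\sqrt{2t}\,x,z)$. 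After cancellation of all constants and Jacobian factors I am left with
\[
\mathcal F_\kappa^W(q_t^W)(x)=c_\kappa^W\,\mathcal F_\kappa^W(g)\bigl(\sqrt{2t}\,x\bigr),
\]
which, by the self-reciprocity $\mathcal F_\kappa^W(g)=g$, is exactly $c_\kappa^W\mathrm e^{-t\|x\|^2}$.

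The only genuine Dunkl-analytic input, and the main obstacle, is the self-reciprocity of $g$. I would prove it by expanding $E_\kappa^W(-\mathrm ix,y)$ as a power series in $y$, integrating termwise against $\mathrm e^{-\|y\|^2/2}\,\mathrm d\mu_\kappa^W(y)$, and using the Macdonald--Mehta-type moment formulas for $\mu_\kappa^W$ together with the fact that the intertwining operator $V_\kappa^W$ preserves degrees of homogeneity, in order to recognize the resulting series as the Taylor expansion of $\mathrm e^{-\|x\|^2/2}$; alternatively, one may invoke that $g$ is the ground state of the Dunkl harmonic oscillator $-\Delta_\kappa^W+\|\cdot\|^2$ and that $\mathcal F_\kappa^W$ interchanges $\Delta_\kappa^W$ with multiplication by $-\|\cdot\|^2$, forcing $\mathcal F_\kappa^W(g)$ to be proportional to $g$, with the proportionality constant pinned down to $1$ by evaluating at $x=0$ via part (1).
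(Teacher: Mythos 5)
The paper never proves this proposition: it is stated as a known fact, imported from R\"osler's and Dunkl's work on the heat semi-group and the transform of the Gaussian, so there is no ``paper proof'' to compare against, and your attempt must be judged on its own merits. On those merits the architecture is correct and part (1) is complete: the substitution $x=\sqrt{2t}\,y$, the homogeneity $h_\kappa^2(\sqrt{2t}\,y)=(2t)^{\gamma}h_\kappa^2(y)$ (hence $\mathrm{d}\mu_\kappa^W(x)=(2t)^{\frac{d}{2}+\gamma}\mathrm{d}\mu_\kappa^W(y)$), and the definition of $c_\kappa^W$ do exactly what you say. For part (2), the reduction to the single identity $\mathcal F_\kappa^W(\mathrm{e}^{-\|\cdot\|^2/2})=\mathrm{e}^{-\|\cdot\|^2/2}$ is also correct; the scaling identity $E_\kappa^W(\lambda z,w)=E_\kappa^W(z,\lambda w)$ does follow from Opdam's uniqueness theorem as you indicate, because Dunkl operators are homogeneous of degree $-1$, with one point you should make explicit: you use it with the complex argument $-ix$ in one slot, which requires the holomorphic extension of $E_\kappa^W$ to $\mathbb C^d\times\mathbb C^d$ (fortunately part of the same theorem as stated in the paper). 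The only place where your proof is not self-contained is precisely the self-reciprocity of the Gaussian, which is the real content of statement (2): this is a foundational theorem of Dunkl, not a routine computation, and you sketch it rather than prove it. Your route (a) is in essence Dunkl's original argument (termwise integration of the homogeneous components $K_n(x,\cdot)$ of the kernel against the Gaussian, using the moment formula $c_\kappa^W\int p\,\mathrm{e}^{-\|y\|^2/2}\,\mathrm{d}\mu_\kappa^W(y)=(\mathrm{e}^{\Delta_\kappa^W/2}p)(0)$ together with $\Delta_\kappa^W K_n(x,\cdot)=\|x\|^2K_{n-2}(x,\cdot)$), so it can be completed, though the moment formula you invoke is itself a nontrivial result; your route (b) contains an additional unproved ingredient, namely the simplicity of the lowest eigenvalue of the Dunkl harmonic oscillator, which in the literature is established via the generalized Hermite functions and is not cheaper than route (a). In short: (1) is fully proved, (2) is correctly and cleanly reduced to a classical identity that you only sketch --- which is still more than the paper itself does, since it quotes the whole proposition without proof.
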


The Dunkl-type heat kernel $Q_\kappa^W$ is defined by taking the Dunkl translation of $q_t^{W}$, that is, according to \cite{RoslerHermite},
\begin{align*}
Q_\kappa^W(x,y,t)&=\tau_x^Wq_t^W(-y)\\
&=\frac{c_\kappa^W}{(2t)^{\frac{d}{2}+\gamma}}\mathrm{e}^{-\frac{(\|x\|^2+\|y\|^2)}{4t}}E_\kappa^W\Bigl(\frac{x}{\sqrt{2t}},\frac{y}{\sqrt{2t}}\Bigr),\quad x,y \in \mathbb R^d, t>0. 
\end{align*}

This positive kernel (\cite{RoslerHermite}) allows us to define a generalized heat operator (or Dunkl-type heat operator). More precisely for every $f \in L^p(\mu_\kappa^W)$, with $1\leqslant p \leqslant+\infty$, and for every $t\geqslant0$, we set
\[H_t^{W}f=\begin{cases}\int_{\mathbb R^d}f(y)Q_\kappa^W(\cdot,y,t)\,\mathrm{d}\mu_\kappa^W(y) &\text{if}\ t>0,
\\ f &\text{if}\ t=0.
 \end{cases}\]

The following fundamental result about this operator is due to Rösler (see \cite{RoslerHermite} and \cite{RoslerQuantum}).

\begin{theo} \label{rosch}
For every $p$ satisfying $1\leqslant p \leqslant+\infty$, the family $\{H_t^{W}\}_{t\geqslant0}$ is a positive and contraction semigroup on $L^p(\mu_\kappa^W)$. Moreover, for every $f \in \mathcal S(\mathbb R^d)$, the function $u$ given for every $(x,t) \in \mathbb R^d\times[0,+\infty[$ by
\[ u(x,t)=H_t^{W}f(x) \]
belongs to $\mathcal C^2(\mathbb R^d\times]0,+\infty[)\cap\mathcal C_b(\mathbb R^d\times[0,+\infty[)$ and is a solution of the Cauchy problem $(CH)_\kappa$. 
\end{theo}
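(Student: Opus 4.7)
The plan is to deduce all the statements of Theorem \ref{rosch} by combining the explicit formula for the heat kernel $Q_\kappa^W$, the fact that $q_t^W$ is a radial nonnegative function with total mass $1$ (Proposition \ref{gau1}), the integral-invariance property under the Dunkl translation of radial $L^1$ functions (Theorem \ref{ega}), and the Plancherel theorem for the Dunkl transform (Theorem \ref{InversionPlancherel}).

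First I would handle positivity and the contraction property. Positivity of $H_t^W$ is immediate from the cited positivity of $Q_\kappa^W$. To get the $L^p$-contraction I would first show that for every fixed $x$ and $t>0$, the measure $Q_\kappa^W(x,y,t)\,\mathrm{d}\mu_\kappa^W(y)$ is a probability measure on $\mathbb{R}^d$. Nonnegativity is known, and total mass $1$ follows by writing $Q_\kappa^W(x,y,t)=\tau_x^W q_t^W(-y)$, changing variable $y\mapsto -y$ (the measure $\mu_\kappa^W$ is invariant under $y\mapsto -y$ since $-\mathrm{Id}$ normalises the positive subsystem or, more concretely, $h_\kappa^2$ is even), and then applying Theorem \ref{ega} together with Proposition \ref{gau1}(1). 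The symmetry $Q_\kappa^W(x,y,t)=Q_\kappa^W(y,x,t)$ read off from the explicit formula (using $E_\kappa^W(a,b)=E_\kappa^W(b,a)$) then gives total mass $1$ also when one integrates in the $x$-variable. Contraction on $L^\infty$ is then obvious, contraction on $L^1$ follows from Fubini and the $x$-integration, and for $1<p<\infty$ I would apply Jensen's inequality in the probability measure $Q_\kappa^W(x,y,t)\,\mathrm{d}\mu_\kappa^W(y)$ to obtain $|H_t^Wf(x)|^p\leqslant\int|f(y)|^p Q_\kappa^W(x,y,t)\,\mathrm{d}\mu_\kappa^W(y)$ and integrate.

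Next I would establish the semigroup property $H_{s+t}^W=H_s^W\circ H_t^W$. Working on $\mathscr{A}_\kappa^W(\mathbb{R}^d)$ (which is dense in each $L^p$), the identity $\mathcal{F}_\kappa^W(\tau_x^W f)(\xi)=E_\kappa^W(ix,\xi)\mathcal{F}_\kappa^W(f)(\xi)$ and Proposition \ref{gau1}(2) combine to give $\mathcal{F}_\kappa^W(H_t^Wf)(\xi)=\mathrm{e}^{-t\|\xi\|^2}\mathcal{F}_\kappa^W(f)(\xi)$; the semigroup identity then reduces on the Fourier side to $\mathrm{e}^{-(s+t)\|\xi\|^2}=\mathrm{e}^{-s\|\xi\|^2}\mathrm{e}^{-t\|\xi\|^2}$ and transfers back by Plancherel. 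Density and the $L^p$-contraction just established extend the identity to all of $L^p(\mu_\kappa^W)$. Together with $H_0^W=\mathrm{Id}$, this finishes the semigroup part.

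For the Cauchy problem with $f\in\mathcal{S}(\mathbb{R}^d)$, I would write $u(x,t)=c_\kappa^W\int_{\mathbb{R}^d}\mathrm{e}^{-t\|\xi\|^2}E_\kappa^W(ix,\xi)\mathcal{F}_\kappa^W(f)(\xi)\,\mathrm{d}\mu_\kappa^W(\xi)$ using the Dunkl inversion formula. Because $\mathcal{F}_\kappa^W$ preserves $\mathcal{S}(\mathbb{R}^d)$, the factor $\mathrm{e}^{-t\|\xi\|^2}\mathcal{F}_\kappa^W(f)(\xi)$ is Schwartz in $\xi$ uniformly on compacts in $t\geqslant 0$, so using polynomial bounds on $E_\kappa^W(ix,\cdot)$ and its $\xi$-derivatives one can justify differentiation under the integral to obtain continuity, boundedness, and $\mathcal{C}^2$ regularity in $(x,t)$. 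The symbol of $\Delta_\kappa^W$ being $-\|\xi\|^2$ (apply $T_{e_j}^{\mathcal R}$ in $x$ inside the integral twice, using the eigenfunction equation $T_\xi^{\mathcal R}E_\kappa^W(\cdot,y)=\langle y,\xi\rangle E_\kappa^W(\cdot,y)$) then gives $\Delta_\kappa^W u(x,t)=\partial_t u(x,t)$ directly. The initial condition $u(\cdot,0)=f$ follows from the fact that $q_t^W$ is an approximate identity as $t\to 0^+$ (for Schwartz $f$ this is an elementary consequence of Proposition \ref{gau1}(1), positivity of $Q_\kappa^W$, and the concentration of $q_t^W$ near the origin). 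The main technical obstacle is the justification of differentiation under the integral and the $\mathcal{C}_b$ control up to $t=0$; once the Schwartz decay of $\mathcal{F}_\kappa^W(f)$ is exploited together with the uniform bound $|E_\kappa^W(ix,\xi)|\leqslant 1$, this becomes routine.
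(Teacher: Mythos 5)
The paper offers no proof of Theorem \ref{rosch}: it is stated as a known result of R\"osler, with a pointer to \cite{RoslerHermite} and \cite{RoslerQuantum}, so there is nothing in the paper itself to compare your argument against. Judged on its own terms, your sketch is a sound reconstruction of the standard argument from those references. The chain you propose --- positivity of $Q_\kappa^W$ (which you correctly treat as an imported, cited fact rather than something visible in the explicit formula), then total mass one of $Q_\kappa^W(x,\cdot,t)\,\mathrm{d}\mu_\kappa^W$ via evenness of $h_\kappa^2$, Theorem \ref{ega} and Proposition \ref{gau1}, then the $L^\infty$, $L^1$ and Jensen-interpolated $L^p$ contractions --- is correct, as is the multiplier identity $\mathcal F_\kappa^W(H_t^Wf)=\mathrm{e}^{-t\|\cdot\|^2}\mathcal F_\kappa^W(f)$ on $\mathscr A_\kappa^W(\mathbb R^d)$ and the treatment of the Cauchy problem by differentiating the inversion-formula representation of $u$ under the integral, using the Schwartz-invariance of $\mathcal F_\kappa^W$, the bound $|E_\kappa^W(ix,\xi)|\leqslant 1$, and the eigenfunction equation for the Dunkl kernel.

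The one genuine (though easily repaired) gap is the extension of the semigroup identity to $p=+\infty$: your argument ``density and the $L^p$-contraction extend the identity to all of $L^p(\mu_\kappa^W)$'' fails there, because $\mathscr A_\kappa^W(\mathbb R^d)$ is not dense in $L^\infty(\mu_\kappa^W)$. The standard fix is to first establish the Chapman--Kolmogorov identity
\[
\int_{\mathbb R^d} Q_\kappa^W(x,y,s)\,Q_\kappa^W(y,z,t)\,\mathrm{d}\mu_\kappa^W(y)=Q_\kappa^W(x,z,s+t),
\]
which follows by applying your $\mathscr A_\kappa^W$-case computation to the function $Q_\kappa^W(\cdot,z,t)$ (this function does belong to $\mathscr A_\kappa^W(\mathbb R^d)$, since it has Gaussian decay and its Dunkl transform is bounded by $\mathrm{e}^{-t\|\cdot\|^2}$ up to constants), and then to obtain $H_s^W(H_t^Wf)=H_{s+t}^Wf$ for $f\in L^\infty(\mu_\kappa^W)$ directly from Fubini. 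A cosmetic remark in the same spirit: for the $\mathcal C^2$ regularity you need bounds on the $x$-derivatives of $E_\kappa^W(ix,\xi)$, not its $\xi$-derivatives; by the symmetry $E_\kappa^W(ix,\xi)=E_\kappa^W(i\xi,x)$ these are controlled by powers of $\|\xi\|$, which the Schwartz decay of $\mathcal F_\kappa^W(f)$ absorbs, so the argument goes through as you intend.
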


We can easily improve the previous theorem.

\begin{theo} \label{diffusion}
For every $p$ satisfying $1\leqslant p \leqslant+\infty$, the family $\{H_t^{W}\}_{t\geqslant0}$ is a symmetric diffusion semigroup on $L^p(\mu_\kappa^W)$, that is,  a semigroup which satisfies:
\begin{enumerate}
\item $H_t^{W}$ is a contraction on $L^p(\mu_\kappa^W)$;
\item $H_t^{W}$ is symmetric, that is,  self-adjoint on $L^2(\mu_\kappa^W)$;
\item $H_t^{W}$ is positive;
\item $H_t^{W}(1)=1$.
\end{enumerate}
\end{theo}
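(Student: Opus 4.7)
The plan is to check the four defining conditions of a symmetric diffusion semi-group, noting that two of them are already handed to us by Theorem \ref{rosch}. Specifically, condition (i) (contraction on $L^p$ for every $p \in [1,+\infty]$) and condition (iii) (positivity) are exactly what Theorem \ref{rosch} asserts, so nothing is left to do for them. The work is therefore concentrated on (ii) the self-adjointness on $L^2(\mu_\kappa^W)$ and (iv) the fact that $H_t^W(1)=1$.

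For (iv), I would start from the definition
\[
H_t^W(1)(x)=\int_{\mathbb R^d}Q_\kappa^W(x,y,t)\,\mathrm{d}\mu_\kappa^W(y)=\int_{\mathbb R^d}\tau_x^Wq_t^W(-y)\,\mathrm{d}\mu_\kappa^W(y),
\]
and use the fact that $\mu_\kappa^W$ is invariant under the map $y\mapsto -y$ (the weight $h_\kappa^2$ involves $|\langle y,\alpha\rangle|^{2\kappa(\alpha)}$) to get rid of the sign. Since $q_t^W$ is radial and belongs to $L^1(\mu_\kappa^W)$, Theorem \ref{ega} converts the integral into $\int_{\mathbb R^d}q_t^W(y)\,\mathrm{d}\mu_\kappa^W(y)$, which is $1$ by the first item of Proposition \ref{gau1}. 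This closes (iv) in two lines.

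For (ii), the key observation is that the heat kernel is symmetric in $(x,y)$. Indeed, from the explicit formula
\[
Q_\kappa^W(x,y,t)=\frac{c_\kappa^W}{(2t)^{d/2+\gamma}}\mathrm{e}^{-(\|x\|^2+\|y\|^2)/4t}E_\kappa^W\Bigl(\frac{x}{\sqrt{2t}},\frac{y}{\sqrt{2t}}\Bigr),
\]
the exponential prefactor is manifestly symmetric in its two arguments, and the Dunkl kernel $E_\kappa^W$ itself satisfies $E_\kappa^W(u,v)=E_\kappa^W(v,u)$, a standard property already contained in Opdam's theorem through the construction $E_\kappa^W(\cdot,y)=V_\kappa^W(\mathrm{e}^{\langle\cdot,y\rangle})$ and the adjointness of the intertwining operator (or equivalently, through the relation $\mathcal F_\kappa^W(\tau_x^Wf)(y)=E_\kappa^W(ix,y)\mathcal F_\kappa^W(f)(y)$ combined with Plancherel). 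Hence $Q_\kappa^W(x,y,t)=Q_\kappa^W(y,x,t)$, and Fubini's theorem gives
\[
\int_{\mathbb R^d}H_t^Wf(x)\,g(x)\,\mathrm{d}\mu_\kappa^W(x)=\int_{\mathbb R^d}f(y)\,H_t^Wg(y)\,\mathrm{d}\mu_\kappa^W(y)
\]
for $f,g\in L^2(\mu_\kappa^W)$, which is the self-adjointness in (ii).

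There is no real obstacle here: the result is essentially a repackaging of Theorem \ref{rosch} together with two structural facts (invariance of $\mu_\kappa^W$ under $y\mapsto -y$, symmetry of $E_\kappa^W$) and the stability property of $\tau_x^W$ on radial functions given by Theorem \ref{ega}. The only point where one should be slightly careful is to invoke symmetry of the Dunkl kernel in a clean way; everything else is a matter of bookkeeping.
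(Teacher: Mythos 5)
Your proposal is correct and follows exactly the route the paper intends: the paper offers no written proof (it only remarks that Theorem \ref{rosch} is ``easily improved''), and the two missing properties are obtained precisely as you do --- $H_t^W(1)=1$ from the evenness of $\mu_\kappa^W$, Theorem \ref{ega} and Proposition \ref{gau1} (a computation the paper itself carries out inside the proof of Lemma \ref{CF2}), and self-adjointness from the symmetry $E_\kappa^W(u,v)=E_\kappa^W(v,u)$ of the Dunkl kernel together with Fubini. The only cosmetic improvement would be to cite the symmetry of $E_\kappa^W$ directly from the literature (Dunkl, de Jeu or R\"osler) rather than sketching its derivation, and to note that Fubini is legitimate because the kernel is positive and the double integral of $|f|\,|g|$ is finite by Cauchy--Schwarz and the $L^2$-contraction property.
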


The reader is referred to the book of Stein \cite{SteinTopics} for a detailed study of this kind of semigroup.

\section{Behavior of the Dunkl maximal operator in the scalar case}

 In this section we give the proof of Theorem \ref{firstresult} for a general reflection group. For convenience, we prove each point of the theorem separately. Thus, we first establish the following result.

\begin{theo} \label{ConstanteFaible}
There exists a numerical constant $C$ such that for every $f \in L^1(\mu_\kappa^W)$ and every $\lambda>0$, 
\[
\mu_\kappa^W\Bigl(\Bigl\{x \in \mathbb{R}^d: M_\kappa^Wf(x)>\lambda\Bigr\}\Bigr)\leqslant C\frac{d+2\gamma}{\lambda}\|f\|_{W,1}.
\]
\end{theo}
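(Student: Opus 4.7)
The plan is to follow the strategy of Stein and Str\"omberg and reduce the study of $M_\kappa^W$ to the Hopf--Dunford--Schwartz maximal operator $\mathscr{M}$ attached to the Dunkl heat semi-group $\{H_t^W\}_{t\geqslant 0}$, for which Theorem~\ref{HDS} provides a weak-$(1,1)$ constant equal to $2$, independent of the dimension and of $\kappa$.

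The cornerstone of the argument is the pointwise radial estimate
\[
\frac{\chi_{B_r}(y)}{\mu_\kappa^W(B_r)} \leqslant C(d+2\gamma)\,\frac{1}{r^2}\int_0^{r^2} q_s^W(y)\,\mathrm{d}s,\qquad r>0,\ y \in \mathbb{R}^d,
\]
with $C$ a numerical constant. To establish it I would exploit the homogeneity $\mu_\kappa^W(B_r)=r^{d+2\gamma}\mu_\kappa^W(B_1)$ and evaluate the right-hand side via the change of variable $s=\|y\|^2/(4u)$, which turns the integral into an incomplete Gamma integral $\int_{\|y\|^2/(4r^2)}^{\infty} u^{d/2+\gamma-2}\mathrm{e}^{-u}\,\mathrm{d}u$. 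Passing to polar coordinates in the definition of $c_\kappa^W$ shows that $\mu_\kappa^W(B_1)\,c_\kappa^W$ is proportional to $1/\bigl((d+2\gamma)\,\Gamma((d+2\gamma)/2)\bigr)$, and it is precisely this identity that cancels the $\Gamma$-type normalizations appearing on the two sides. The worst case $\|y\|=r$ is then handled via $\Gamma(d/2+\gamma-1)=\Gamma(d/2+\gamma)/(d/2+\gamma-1)$, which produces a constant of order $d+2\gamma$.

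Once this pointwise radial estimate is in hand, the positivity of $\tau_x^W$ on bounded radial positive functions (Theorem~\ref{TransRad}) allows one to apply $\tau_x^W$ to both sides without changing the direction of the inequality, giving
\[
\frac{1}{\mu_\kappa^W(B_r)}\,\tau_x^W(\chi_{B_r})(-y) \leqslant C(d+2\gamma)\,\frac{1}{r^2}\int_0^{r^2} Q_\kappa^W(x,y,s)\,\mathrm{d}s.
\]
Multiplying by $|f(y)|$, integrating against $\mathrm{d}\mu_\kappa^W(y)$, and taking the supremum over $r>0$ yields the pointwise domination $M_\kappa^W f(x)\leqslant C(d+2\gamma)\,\mathscr{M}f(x)$. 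Theorem~\ref{diffusion} ensures that $\{H_t^W\}_{t\geqslant 0}$ is a contraction semi-group on $L^1(\mu_\kappa^W)$, so Theorem~\ref{HDS} yields the weak-$(1,1)$ inequality for $\mathscr{M}$ with dimension-free constant $2$, and combining this with the domination produces the announced estimate.

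The main obstacle lies in the initial radial comparison. A direct bound of $\chi_{B_r}/\mu_\kappa^W(B_r)$ by a single Gaussian $q_{r^2}^W$ would introduce the factor $(d+2\gamma)^{d/2+\gamma}$ coming from the normalization of $q_t^W$, which is far too large. Only the averaging in $s$ over $(0,r^2)$, coupled with the explicit identity linking $\mu_\kappa^W(B_1)$, $c_\kappa^W$ and $\Gamma((d+2\gamma)/2)$, allows one to absorb this exponential into a linear dependence on $d+2\gamma$.
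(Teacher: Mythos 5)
Your overall strategy is exactly the one of the paper: dominate $\chi_{B_r}/\mu_\kappa^W(B_r)$ pointwise by a time average of the heat kernel, apply $\tau_x^W$ using positivity on radial functions, and conclude via the Hopf--Dunford--Schwartz theorem applied to $\{H_t^W\}_{t\geqslant0}$. However, your central estimate is quantitatively wrong, and this is precisely where the whole difficulty of the theorem lies. You claim
\[
\frac{\chi_{B_r}(y)}{\mu_\kappa^W(B_r)} \leqslant C(d+2\gamma)\,\frac{1}{r^2}\int_0^{r^2} q_s^W(y)\,\mathrm{d}s
\]
with $C$ numerical. Test it at the worst point $\|y\|=r=1$. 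On the one hand, the identities $\mu_\kappa^W(B_1)=a(S^{d-1})/(d+2\gamma)$ and $(c_\kappa^W)^{-1}=2^{\frac{d}{2}+\gamma-1}\Gamma(\frac{d}{2}+\gamma)a(S^{d-1})$ give $1/\mu_\kappa^W(B_1)=(d+2\gamma)c_\kappa^W2^{\frac{d}{2}+\gamma-1}\Gamma(\frac{d}{2}+\gamma)$. On the other hand, your incomplete Gamma computation gives at best $\int_0^{1}{q_s^{W}}_{|_{S^{d-1}}}\,\mathrm{d}s\leqslant\int_0^{+\infty}{q_s^{W}}_{|_{S^{d-1}}}\,\mathrm{d}s=\frac{c_\kappa^W2^{\frac{d}{2}+\gamma}}{4}\Gamma(\frac{d}{2}+\gamma-1)$. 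The ratio of the two sides is therefore of order
\[
(d+2\gamma)\,\frac{\Gamma\bigl(\frac{d}{2}+\gamma\bigr)}{\Gamma\bigl(\frac{d}{2}+\gamma-1\bigr)}=(d+2\gamma)\Bigl(\frac{d}{2}+\gamma-1\Bigr)\sim (d+2\gamma)^2,
\]
not $d+2\gamma$: there is one factor from the homogeneity of $\mu_\kappa^W(B_1)$ \emph{and} one from the quotient of Gamma functions, and your sketch only accounts for one of them. So averaging over $(0,r^2)$ only yields the weak-type bound with constant $O((d+2\gamma)^2)$.

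The missing idea is the choice of the averaging window. One must average over $(0,r^2t_0)$ with $t_0=\frac{1}{d+2\gamma}$: the prefactor $\frac{1}{r^2t_0}$ then contributes an extra factor $d+2\gamma$ on the right-hand side, which cancels one of the two factors above, \emph{provided} the truncated integral $\int_0^{1/(d+2\gamma)}{q_t^{W}}_{|_{S^{d-1}}}\,\mathrm{d}t$ still captures a fixed proportion of $\int_0^{+\infty}{q_t^{W}}_{|_{S^{d-1}}}\,\mathrm{d}t$. That last point is not free: it is the content of the tail estimate of Lemma \ref{CF1} combined with Stirling's formula, showing $\bigl(\frac{n}{4}\bigr)^{\frac{n}{2}-1}\mathrm{e}^{-\frac{n}{4}}=\mathrm{o}\bigl(\Gamma(\frac{n}{2}-1)\bigr)$. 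Without this rescaling of the time interval and the accompanying tail bound, your argument proves a weaker theorem. A secondary, repairable point: when you apply $\tau_x^W$ to the time-averaged kernel you also need to justify interchanging $\tau_x^W$ with the integral in $s$ (the paper does this via the inversion formula on $\mathscr A_\kappa^W(\mathbb R^d)$ and a limiting argument), since $\tau_x^W$ is only defined spectrally.
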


To prove this theorem, we need two lemmas. The first one is  basic calculus. Before stating it, we introduce some notation.
 
\begin{nota}
We define
\[
a(S^{d-1})=\int_{S^{d-1}}h^2_{\kappa}(x)\,\mathrm{d}\omega(x)
\]
where $\omega$ is the usual Lebesgue measure on $S^{d-1}$. We also write
\[
 {q_t^{W}}_{|_{S^{d-1}}}=\frac{c_\kappa^W}{(2t)^{\frac{d}{2}+\gamma}}\mathrm{e}^{-\frac{1}{4t}}.
\]
\end{nota}

With this notation, we can formulate the lemma.

\begin{lem} \label{CF1}
 We have the following equalities:
\begin{align*}
&\mu_\kappa^W(B_1)=\frac{a(S^{d-1})}{d+2\gamma}, \\
 &(c_\kappa^W)^{-1}=2^{\frac{d}{2}+\gamma-1}\Gamma\Bigl(\frac{d}{2}+\gamma\Bigr)a(S^{d-1}), \\
&\int_0^{+\infty}{q_t^{W}}_{|_{S^{d-1}}}\,\mathrm{d}t=\frac{c_\kappa^W2^{\frac{d}{2}+\gamma}}{4}\Gamma\Bigl(\frac{d}{2}+\gamma-1\Bigr).
\end{align*}
Moreover if  $d+2\gamma\geqslant8$, then 
\[
 \int_{\frac{1}{d+2\gamma}}^{+\infty}{q_t^{W}}_{|_{S^{d-1}}}\,\mathrm{d}t\leqslant\frac{c_\kappa^W2^{\frac{d}{2}+\gamma}}{4}\Bigl(\frac{d+2\gamma}{4}\Bigr)^{\frac{d}{2}+\gamma-1}\mathrm{e}^{-\frac{d+2\gamma}{4}}.
\]
\end{lem}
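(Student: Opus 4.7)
All four statements are computations; the only non-routine one is the last inequality, which needs a short monotonicity argument under the hypothesis $d+2\gamma\geqslant 8$.

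For the first identity, I would pass to polar coordinates $x=r\theta$, $r>0$, $\theta\in S^{d-1}$, and use the fact that $h_\kappa^2$ is homogeneous of degree $2\gamma$. This gives
\[
\mu_\kappa^W(B_1)=\int_0^1 r^{d-1+2\gamma}\,\mathrm{d}r\int_{S^{d-1}}h_\kappa^2(\theta)\,\mathrm{d}\omega(\theta)=\frac{a(S^{d-1})}{d+2\gamma}.
\]
For the second identity, I would apply the same polar decomposition to the Mehta integral and substitute $u=r^2/2$:
\[
(c_\kappa^W)^{-1}=a(S^{d-1})\int_0^{+\infty}r^{d-1+2\gamma}\mathrm{e}^{-r^2/2}\,\mathrm{d}r=a(S^{d-1})\,2^{\frac{d}{2}+\gamma-1}\,\Gamma\Bigl(\frac{d}{2}+\gamma\Bigr).
\]

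For the third identity, I would use the change of variable $u=1/(4t)$, so that $\mathrm{d}t=-\mathrm{d}u/(4u^2)$ and $(2t)^{d/2+\gamma}=2^{-(d/2+\gamma)}u^{-(d/2+\gamma)}$. Substituting into the integral converts it into a Gamma integral:
\[
\int_0^{+\infty}{q_t^{W}}_{|_{S^{d-1}}}\,\mathrm{d}t=\frac{c_\kappa^W\,2^{\frac{d}{2}+\gamma}}{4}\int_0^{+\infty}u^{\frac{d}{2}+\gamma-2}\mathrm{e}^{-u}\,\mathrm{d}u=\frac{c_\kappa^W\,2^{\frac{d}{2}+\gamma}}{4}\Gamma\Bigl(\frac{d}{2}+\gamma-1\Bigr).
\]

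For the final inequality, I would apply the same substitution $u=1/(4t)$; the condition $t\geqslant 1/(d+2\gamma)$ translates to $u\in(0,(d+2\gamma)/4]$, so that
\[
\int_{\frac{1}{d+2\gamma}}^{+\infty}{q_t^{W}}_{|_{S^{d-1}}}\,\mathrm{d}t=\frac{c_\kappa^W\,2^{\frac{d}{2}+\gamma}}{4}\int_0^{\frac{d+2\gamma}{4}}u^{\frac{d}{2}+\gamma-2}\mathrm{e}^{-u}\,\mathrm{d}u.
\]
The main (and only) observation is that the function $u\mapsto u^{d/2+\gamma-2}\mathrm{e}^{-u}$ attains its maximum at $u=d/2+\gamma-2$, and under the hypothesis $d+2\gamma\geqslant 8$ we have $d/2+\gamma-2\geqslant (d+2\gamma)/4$, hence the integrand is nondecreasing on $[0,(d+2\gamma)/4]$. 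Bounding it by its value at the right endpoint and multiplying by the length of the interval gives exactly
\[
\int_0^{\frac{d+2\gamma}{4}}u^{\frac{d}{2}+\gamma-2}\mathrm{e}^{-u}\,\mathrm{d}u\leqslant\frac{d+2\gamma}{4}\Bigl(\frac{d+2\gamma}{4}\Bigr)^{\frac{d}{2}+\gamma-2}\mathrm{e}^{-\frac{d+2\gamma}{4}}=\Bigl(\frac{d+2\gamma}{4}\Bigr)^{\frac{d}{2}+\gamma-1}\mathrm{e}^{-\frac{d+2\gamma}{4}},
\]
which yields the claim. The only mildly subtle point is the monotonicity check, which is precisely what forces the assumption $d+2\gamma\geqslant 8$; everything else is direct substitution and polar coordinates.
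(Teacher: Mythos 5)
Your proposal is correct and follows essentially the same route as the paper: polar coordinates and elementary substitutions for the three equalities, then the change of variables $u=1/(4t)$ and a pointwise bound of the integrand by its value at the right endpoint for the final inequality. You merely make explicit the monotonicity justification (the maximum of $u\mapsto u^{\frac{d}{2}+\gamma-2}\mathrm{e}^{-u}$ sits at $\frac{d}{2}+\gamma-2\geqslant\frac{d+2\gamma}{4}$ precisely when $d+2\gamma\geqslant8$) that the paper leaves implicit.
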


\begin{proof} 
The equalities are easy to prove by passing to polar coordinates and by substitution. Therefore we only prove the inequality. By definition we have
\[
\int_{\frac{1}{d+2\gamma}}^{+\infty}{q_t^{W}}_{|_{S^{d-1}}}\,\mathrm{d}t=c_\kappa^W\int_{\frac{1}{d+2\gamma}}^{+\infty}\frac{1}{(2t)^{\frac{d}{2}+\gamma}}\mathrm{e}^{-\frac{1}{4t}}\,\mathrm{d}t,
\]
which leads after a change of variables to
\[
 \int_{\frac{1}{d+2\gamma}}^{+\infty}{q_t^{W}}_{|_{S^{d-1}}}\,\mathrm{d}t=\frac{c_\kappa^W2^{\frac{d}{2}+\gamma}}{4}\int_{0}^{\frac{d+2\gamma}{4}}t^{\frac{d}{2}+\gamma-2}\mathrm{e}^{-t}\,\mathrm{d}t.
\]
Since for $d+2\gamma\geqslant8$ and $t \in [0,\frac{d+2\gamma}{4}]$ we have
\[
t^{\frac{d}{2}+\gamma-2}\mathrm{e}^{-t}\leqslant \Bigl(\frac{d+2\gamma}{4}\Bigr)^{\frac{d}{2}+\gamma-2}\mathrm{e}^{-\frac{d+2\gamma}{4}},
\]
we  obtain
\[
 \int_{0}^{\frac{d+2\gamma}{4}}t^{\frac{d}{2}+\gamma-2}\mathrm{e}^{-t}\,\mathrm{d}t\leqslant \Bigl(\frac{d+2\gamma}{4}\Bigr)^{\frac{d}{2}+\gamma-1}\mathrm{e}^{-\frac{d+2\gamma}{4}},
\]
and the inequality is proved.
\end{proof}

The second lemma allows us to reduce the inequality of Theorem \ref{ConstanteFaible} to a more convenient one.

\begin{lem} \label{CF2}
If there exists $t_0>0$ such that
\[
 \frac{1}{\mu_\kappa^W(B_1)}\leqslant\frac{C(d,\kappa)}{t_0}\int_0^{t_0}{q_t^{W}}_{|_{S^{d-1}}}\,\mathrm{d}t,
\]
then  for every $f \in L^1(\mu_\kappa^W)$ and every $\lambda>0$,
\[
 \mu_\kappa^W\Bigl(\Bigl\{x \in \mathbb{R}^d: M_\kappa^Wf(x)>\lambda\Bigr\}\Bigr)\leqslant 4\frac{C(d,\kappa)}{\lambda}\|f\|_{W,1},
\]
where $C(d,\kappa)$ is the same positive constant in both hypothesis and conclusion of the lemma.
\end{lem}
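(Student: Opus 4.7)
\medskip

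The plan is to dominate $M_\kappa^W$ pointwise by the maximal operator $\mathscr{M}$ associated with the Dunkl heat semi-group $\{H_t^W\}_{t\geqslant 0}$, and then apply Hopf-Dunford-Schwartz (Theorem~\ref{HDS}) to the symmetric diffusion semi-group of Theorem~\ref{diffusion}. The factor $2$ in the conclusion is precisely the constant appearing in the weak-$(1,1)$ bound of Theorem~\ref{HDS}, so the ``$C(d,\kappa)$'' in the hypothesis is what will get carried through to the conclusion.

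First, I observe that $q_t^{W}$ is radial and radially decreasing, hence $q_t^{W}(y)\geqslant {q_t^{W}}_{|_{S^{d-1}}}$ for every $y\in B_1$ and every $t>0$. Integrating in $t$ over $[0,t_0]$ and invoking the hypothesis
\[
\frac{1}{\mu_\kappa^W(B_1)}\leqslant\frac{C(d,\kappa)}{t_0}\int_0^{t_0}{q_t^{W}}_{|_{S^{d-1}}}\,\mathrm{d}t,
\]
I obtain the pointwise bound
\[
\frac{\chi_{B_1}(y)}{\mu_\kappa^W(B_1)}\leqslant \frac{C(d,\kappa)}{t_0}\int_0^{t_0}q_t^{W}(y)\,\mathrm{d}t\qquad\text{for every }y\in\mathbb{R}^d,
\]
the inequality being trivial outside $B_1$. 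A rescaling $y\mapsto y/r$ combined with the homogeneity $q_t^W(y)=r^{-(d+2\gamma)}q_{t/r^2}^W(y/r)$ and the identity $\mu_\kappa^W(B_r)=r^{d+2\gamma}\mu_\kappa^W(B_1)$ promotes this to
\[
\frac{\chi_{B_r}(y)}{\mu_\kappa^W(B_r)}\leqslant \frac{C(d,\kappa)}{T}\int_0^{T}q_s^{W}(y)\,\mathrm{d}s,\qquad T=t_0r^2.
\]

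The key step is now to transfer this radial inequality through the Dunkl translation. The function
\[
G_r(y)=\frac{C(d,\kappa)}{T}\int_0^{T}q_s^{W}(y)\,\mathrm{d}s-\frac{\chi_{B_r}(y)}{\mu_\kappa^W(B_r)}
\]
is nonnegative, radial, bounded and in $L^1(\mu_\kappa^W)$, so Theorem~\ref{TransRad}(2) gives $\tau_x^W G_r\geqslant 0$ pointwise. By linearity and the definition $Q_\kappa^W(x,y,s)=\tau_x^Wq_s^W(-y)$, evaluating at $-y$ yields
\[
\frac{\tau_x^W(\chi_{B_r})(-y)}{\mu_\kappa^W(B_r)}\leqslant \frac{C(d,\kappa)}{T}\int_0^{T}Q_\kappa^W(x,y,s)\,\mathrm{d}s.
\]
Since $\tau_x^W(\chi_{B_r})(-y)\geqslant 0$, one may pull absolute values inside, multiply by $|f(y)|$, integrate against $\mathrm{d}\mu_\kappa^W(y)$, and recognise the right-hand side as the heat average $H_s^W(|f|)(x)$. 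Taking the supremum over $r>0$, equivalently over $T>0$, gives
\[
M_\kappa^Wf(x)\leqslant C(d,\kappa)\,\mathscr{M}(|f|)(x),
\]
where $\mathscr{M}$ is the ergodic-theoretic maximal operator attached to $\{H_t^W\}_{t\geqslant 0}$.

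Finally, by Theorem~\ref{diffusion} the semi-group $\{H_t^W\}_{t\geqslant 0}$ is a contraction semi-group on every $L^p(\mu_\kappa^W)$, and Theorem~\ref{HDS}(1) applied to $|f|\in L^1(\mu_\kappa^W)$ gives the weak-$(1,1)$ bound
\[
\mu_\kappa^W\bigl(\{x:\mathscr{M}(|f|)(x)>\lambda\}\bigr)\leqslant \frac{2}{\lambda}\|f\|_{W,1},
\]
from which the stated inequality with constant $2C(d,\kappa)/\lambda$ follows by the pointwise domination above. The main obstacle is the middle step: the fact that $\tau_x^W$ is not positivity-preserving in general forces the argument to be set up so that only \emph{radial, bounded, positive, $L^1$} functions are translated, which is exactly what Theorem~\ref{TransRad}(2) allows.
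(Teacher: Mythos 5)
Your strategy is the paper's: dominate $\chi_{{}_{B_r}}/\mu_\kappa^W(B_r)$ by a time average of the heat kernel, push the inequality through $\tau_x^W$ via positivity of the translation on radial functions, recognise the semi-group $\{H_t^W\}_{t\geqslant0}$, and invoke Theorem~\ref{HDS}(1) to get the factor $2$. Everything up to and including the rescaling, and everything from the pointwise bound $M_\kappa^Wf\leqslant C(d,\kappa)\,\mathscr M(|f|)$ onward, is correct and matches the paper. The gap is concentrated in the transfer step. First, your auxiliary function $G_r$ is \emph{not} bounded: the potential $\int_0^T q_s^W(y)\,\mathrm{d}s$ diverges at the origin as soon as $d+2\gamma\geqslant 2$ (it behaves like $\|y\|^{2-(d+2\gamma)}$ there), so Theorem~\ref{TransRad}(2), which is stated for bounded radial positive $L^1$ functions, does not apply to $G_r$ as written.

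Second, and more seriously, the identity
\[
\tau_x^W\Bigl(\int_0^T q_s^W(\cdot)\,\mathrm{d}s\Bigr)(-y)=\int_0^T \tau_x^W(q_s^W)(-y)\,\mathrm{d}s
\]
is not ``linearity''. For a general reflection group $W$, $\tau_x^W$ is defined spectrally through the Dunkl transform and is not known to be given by integration against a measure of bounded total variation, so commuting it with an integral in the parameter $s$ genuinely requires proof; this is exactly the point (equation \eqref{tempconst}) to which the paper devotes the bulk of its argument. The paper truncates to $\int_{1/n}^{T}q_s^W\,\mathrm{d}s$, which \emph{is} bounded and lies in $\mathscr A_\kappa^W(\mathbb R^d)$, commutes the truncated integral with $\tau_x^W$ by means of the inversion formula and Fubini, and then passes to the limit $n\to\infty$ in $L^1(\mu_\kappa^W)$ on both sides, using Theorem~\ref{TransRad}(1) on one side and the mass conservation of Theorem~\ref{ega} on the other. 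Without some such truncation-and-limit argument, your pointwise domination of $M_\kappa^Wf$ by the ergodic maximal function is not justified.
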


\begin{proof} We can assume that $f$ is nonnegative. If there exists $t_0>0$ such that
\begin{equation*} 
\frac{1}{\mu_\kappa^W(B_1)}\leqslant\frac{C(d,\kappa)}{t_0}\int_0^{t_0}{q_t^{W}}_{|_{S^{d-1}}}\,\mathrm{d}t,
\end{equation*}
we can deduce that for $n$ large enough,
\begin{equation} \label{largeenough}
\frac{1}{\mu_\kappa^W(B_1)}\leqslant\frac{2C(d,\kappa)}{t_0}\int_{\frac{1}{n}}^{t_0}{q_t^{W}}_{|_{S^{d-1}}}\,\mathrm{d}t.
\end{equation}
Then we claim that  for every  $y \in \mathbb{R}^d$,
\begin{equation} \label{tailleint1}
\frac{1}{\mu_\kappa^W(B_1)}\chi_{{}_{B_1}}(y)\leqslant\frac{2C(d,\kappa)}{t_0}\int_{\frac{1}{n}}^{t_0}q_t^{W}(y)\,\mathrm{d}t.
\end{equation}
Indeed if $\|y\|>1$, there is nothing to do since $\chi_{{}_{B_1}}(y)=0$. If $\|y\|\leqslant1$, it is enough to use \eqref{largeenough} and the fact that ${q_t^{W}}_{|_{S^{d-1}}}\leqslant q_t^{W}(y)$.
\newline 
As a result, we can write, for every $r>0$ and every $y \in \mathbb{R}^d$,
\[
 \frac{1}{\mu_\kappa^W(B_r)}\chi_{{}_{B_r}}(y)=\frac{1}{r^{d+2\gamma}\mu_\kappa^W(B_1)}\chi_{{}_{B_1}}\Bigl(\frac{y}{r}\Bigr)\leqslant\frac{2C(d,\kappa)}{r^{d+2\gamma}t_0}\int_{\frac{1}{n}}^{t_0}q_t^{W}\Bigl(\frac{y}{r}\Bigr)\,\mathrm{d}t,
\]
which leads after a change of variables to
\[
\frac{1}{\mu_\kappa^W(B_r)}\chi_{{}_{B_r}}(y)\leqslant \frac{2C(d,\kappa)}{r^2t_0}\int_{\frac{r^2}{n}}^{r^2t_0}q_t^{W}(y)\,\mathrm{d}t.
\]
Let $x \in \mathbb{R}^d$. By Theorem \ref{TransRad}$(2)$,
\[
\frac{1}{\mu_\kappa^W(B_r)}\tau_x^{W}(\chi_{{}_{B_r}})(-y)\leqslant \frac{2C(d,\kappa)}{r^2t_0}\tau_x^{W}\biggl(\int_{\frac{r^2}{n}}^{r^2t_0}q_t^{W}(\cdot)\,\mathrm{d}t\biggr)(-y).
\]
Let us temporarily assume that
\begin{equation} \label{tempconst}
\tau_x^{W}\biggl(\int_{\frac{r^2}{n}}^{r^2t_0}q_t^{W}(\cdot)\,\mathrm{d}t\biggr)(-y)=\int_{\frac{r^2}{n}}^{r^2t_0}\tau_x^{W}(q_t^{W})(-y)\,\mathrm{d}t.
\end{equation}
This implies
\[
\frac{1}{\mu_\kappa^W(B_r)}\tau_x^{W}(\chi_{{}_{B_r}})(-y)\leqslant \frac{2C(d,\kappa)}{r^2t_0}\int_{\frac{r^2}{n}}^{r^2t_0}\tau_x^{W}(q_t^{W})(-y)\,\mathrm{d}t.
\]
Multiplying both sides by $f(y)$ and integrating over $\mathbb{R}^d$  we obtain
\[
\frac{1}{\mu_\kappa^W(B_r)}\int_{\mathbb{R}^d}f(y)\tau_x^{W}(\chi_{{}_{B_r}})(-y)\,\mathrm{d}\mu_\kappa^W(y)\leqslant \frac{2C(d,\kappa)}{r^2t_0}\int_{\frac{r^2}{n}}^{r^2t_0}H_t^{W}f(x)\,\mathrm{d}t,
\]
and so
\[
\frac{1}{\mu_\kappa^W(B_r)}\int_{\mathbb{R}^d}f(y)\tau_x^{W}(\chi_{{}_{B_r}})(-y)\,\mathrm{d}\mu_\kappa^W(y)\leqslant \frac{2C(d,\kappa)}{r^2t_0}\int_{0}^{r^2t_0}H_t^{W}f(x)\,\mathrm{d}t,
\]
This easily yields
\[
\frac{1}{\mu_\kappa^W(B_r)}\int_{\mathbb{R}^d}f(y)\tau_x^{W}(\chi_{{}_{B_r}})(-y)\,\mathrm{d}\mu_\kappa^W(y)\leqslant 2C(d,\kappa)\sup_{s>0}\biggl(\frac{1}{s}\int_0^{s}H_t^{W}f(x)\,\mathrm{d}t\biggr),
\]
from which we can deduce that
\[
M_\kappa^Wf(x)\leqslant 2C(d,\kappa)\sup_{s>0}\biggl(\frac{1}{s}\int_0^{s}H_t^{W}f(x)\,\mathrm{d}t\biggr). 
\]
Since $\{H_t^{W}\}_{t\geqslant0}$ is a semigroup which satisfies the contraction property on $L^p(\mu_\kappa^W)$ (Theorem \ref{rosch}), the first point of the Hopf-Dunford-Schwartz ergodic theorem implies the desired conclusion and we are left with the task of establishing \eqref{tempconst}.

 Let $n$ be an integer. Due to Proposition \ref{gau1}, it is easily seen that the radial function $\int_{\frac{r^2}{n}}^{r^2t_0}q_t^{W}(\cdot)\,\mathrm{d}t$ is in $L^1(\mu_\kappa^W)$ so  Theorem \ref{TransRad} leads  to
\[\tau_x^{W}\biggl(\int_{\frac{r^2}{n}}^{r^2t_0}q_t^{W}(\cdot)\,\mathrm{d}t\biggr) \in L^1(\mu_\kappa^W).\]
 Since  on one hand
\[\mathcal F_\kappa^W\biggl(\int_{\frac{r^2}{n}}^{r^2t_0}q_t^{W}(\cdot)\,\mathrm{d}t\biggr)=\int_{\frac{r^2}{n}}^{r^2t_0}c_\kappa^W\mathrm{e}^{-t\|\cdot\|^2}\,\mathrm{d}t \in L^1(\mu_\kappa^W),\]
and on the other hand
\[\mathcal F_\kappa^W\biggl(\tau_x^{W}\biggl(\int_{\frac{r^2}{n}}^{r^2t_0}q_t^{W}(\cdot)\,\mathrm{d}t\biggr)\biggr)=E_\kappa^W(ix,\cdot)\,\mathcal F_\kappa^W\biggl(\int_{\frac{r^2}{n}}^{r^2t_0}q_t^{W}(\cdot)\,\mathrm{d}t\biggr),\]
we find
\[\mathcal F_\kappa^W\biggl(\tau_x^{W}\biggl(\int_{\frac{r^2}{n}}^{r^2t_0}q_t^{W}(\cdot)\,\mathrm{d}t\biggr)\biggr) \in L^1(\mu_\kappa^W).\]
Consequently, $\tau_x^{W}\bigl(\int_{\frac{r^2}{n}}^{r^2t_0}q_t^{W}(\cdot)\,\mathrm{d}t\bigr) \in \mathcal A_\kappa^W(\mathbb{R}^d)$ and we can use the inversion formula to obtain
\[\tau_x^{W}\biggl(\int_{\frac{r^2}{n}}^{r^2t_0}q_t^{W}(\cdot)\,\mathrm{d}t\biggr)(y)=c_\kappa^W\int_{\mathbb{R}^d}E_\kappa^W(ix,z)E_\kappa^W(iy,z)\int_{\frac{r^2}{n}}^{r^2t_0}c_\kappa^W\mathrm{e}^{-t\|z\|^2}\,\mathrm{d}t\,\mathrm{d}\mu_\kappa^W(z), \]
that is,
\begin{multline*} \tau_x^{W}\biggl(\int_{\frac{r^2}{n}}^{r^2t_0}q_t^{W}(\cdot)\,\mathrm{d}t\biggr)(y)\\
=\int_{\frac{r^2}{n}}^{r^2t_0}\biggl(c_\kappa^W\int_{\mathbb{R}^d}E_\kappa^W(ix,z)E_\kappa^W(iy,z)c_\kappa^W\mathrm{e}^{-t\|z\|^2}\,\mathrm{d}\mu_\kappa^W(z)\biggr)\,\mathrm{d}t.
\end{multline*}
The  inversion formula leads  to
\[\tau_x^{W}\biggl(\int_{\frac{r^2}{n}}^{r^2t_0}q_t^{W}(\cdot)\,\mathrm{d}t\biggr)(y)=\int_{\frac{r^2}{n}}^{r^2t_0}\tau_x^{W}(q_t^{W})(y)\,\mathrm{d}t,\]
and \eqref{tempconst} is established. The lemma is proved.\end{proof}

\begin{rem}
Of course, we can take the limit of \eqref{tempconst}  to prove 
\begin{equation} \label{boff}
\tau_x^{W}\biggl(\int_{0}^{r^2t_0}q_t^{W}(\cdot)\,\mathrm{d}t\biggr)(y)=\int_{0}^{r^2t_0}\tau_x^{W}(q_t^{W})(y)\,\mathrm{d}t.
\end{equation}
 On one hand, $\int_{\frac{r^2}{n}}^{r^2t_0}q_t^{W}(\cdot)\,\mathrm{d}t \to \int_{0}^{r^2t_0}q_t^{W}(\cdot)\,\mathrm{d}t$ in $L^1(\mu_\kappa^W)$ as $n$ goes to infinity. Therefore, by Theorem \ref{TransRad}$(1)$,
\[\tau_x^{W}\biggl(\int_{\frac{r^2}{n}}^{r^2t_0}q_t^{W}(\cdot)\,\mathrm{d}t\biggr) \to \tau_x^{W}\biggl(\int_{0}^{r^2t_0}q_t^{W}(\cdot)\,\mathrm{d}t\biggr)\]
in $L^1(\mu_\kappa^W)$ as $n$ goes to infinity. \newline

On the other hand
\[0\leqslant\int_{0}^{r^2t_0}\tau_x^{W}(q_t^{W})(y)\,\mathrm{d}t-\int_{\frac{r^2}{n}}^{r^2t_0}\tau_x^{W}(q_t^{W})(y)\,\mathrm{d}t=\int^{\frac{r^2}{n}}_0\tau_x^{W}(q_t^{W})(y)\,\mathrm{d}t.\]
But 
\[\int_{\mathbb{R}^d}\int^{\frac{r^2}{n}}_0\tau_x^{W}(q_t^{W})(y)\,\mathrm{d}t\,\mathrm{d}\mu_\kappa^W(y)=\int^{\frac{r^2}{n}}_0\int_{\mathbb{R}^d}\tau_x^{W}(q_t^{W})(y)\,\mathrm{d}\mu_\kappa^W(y)\,\mathrm{d}t=\frac{r^2}{n}\]
since  thanks to Theorem \ref{ega},
\[ \int_{\mathbb{R}^d}\tau_x^{W}(q_t^{W})(y)\,\mathrm{d}\mu_\kappa^W(y)=\int_{\mathbb{R}^d}q_t^{W}(y)\,\mathrm{d}\mu_\kappa^W(y)=1.\]
Consequently
\[ \int_{\frac{r^2}{n}}^{r^2t_0}\tau_x^{W}(q_t^{W})(y)\,\mathrm{d}t\to\int_{0}^{r^2t_0}\tau_x^{W}(q_t^{W})(y)\,\mathrm{d}t\]
in $L^1(\mu_\kappa^W)$ as $n$ goes to infinity. The equality \eqref{boff} is thus true. 
\end{rem}

We are now in a position to prove Theorem \ref{ConstanteFaible}.

\begin{proof} Due to Lemma \ref{CF2}, it is enough to find $t_0>0$ such that 
\begin{equation} \label{CFint1}
 \frac{1}{\mu_\kappa^W(B_1)}\leqslant C\frac{d+2\gamma}{t_0}\int_0^{t_0}{q_t^{W}}_{|_{S^{d-1}}}\,\mathrm{d}t,
\end{equation}
where $C$ is a numerical constant.
Set $t_0=\tfrac{1}{d+2\gamma}$. The inequality of Lemma \ref{CF1} asserts that for $d+2\gamma\geqslant8$,
\[
\int_{\frac{1}{d+2\gamma}}^{+\infty}{q_t^{W}}_{|_{S^{d-1}}}\,\mathrm{d}t\leqslant\frac{c_\kappa^W2^{\frac{d}{2}+\gamma}}{4}\Bigl(\frac{d+2\gamma}{4}\Bigr)^{\frac{d}{2}+\gamma-1}\mathrm{e}^{-\frac{d+2\gamma}{4}}.
\]
 On one hand, Stirling's formula implies that
\[
\Bigl(\frac{n}{4}\Bigr)^{\frac{n}{2}-1}\mathrm{e}^{-\frac{n}{4}}=\underset{n \to +\infty}{\mathrm{o}}\Bigl(\Gamma\Bigl(\frac{n}{2}-1\Bigr)\Bigr),
\]
 and, on the other hand, the third equality of Lemma \ref{CF1} yields 
\[
 \int_0^{+\infty}{q_t^{W}}_{|_{S^{d-1}}}\,\mathrm{d}t=\frac{c_\kappa^W2^{\frac{d}{2}+\gamma}}{4}\Gamma\Bigl(\frac{d}{2}+\gamma-1\Bigr).
\]
Therefore, we conclude that there exists a numerical constant $C$ such that 
\begin{equation} \label{nvl}
c_\kappa^W2^{\frac{d}{2}+\gamma}\Gamma\Bigl(\frac{d}{2}+\gamma-1\Bigr)\leqslant  C\int_0^{\frac{1}{d+2\gamma}}{q_t^{W}}_{|_{S^{d-1}}}\,\mathrm{d}t.
\end{equation}
By the first two equalities of Lemma \ref{CF1} we can write
\[
 c_\kappa^W2^{\frac{d}{2}+\gamma}\Gamma\Bigl(\frac{d}{2}+\gamma-1\Bigr)=\frac{2\,\Gamma\bigl(\frac{d}{2}+\gamma-1\bigr)}{\mu_\kappa^W(B_1)(d+2\gamma)\Gamma\bigl(\frac{d}{2}+\gamma\bigr)},
\]
and by inserting this  in \eqref{nvl} we are led to 
\[
 \frac{1}{\mu_\kappa^W(B_1)}\leqslant C\frac{(d+2\gamma)\,\Gamma\bigl(\frac{d}{2}+\gamma\bigr)}{\Gamma\bigl(\frac{d}{2}+\gamma-1\bigr)}\int_0^{\frac{1}{d+2\gamma}}{q_t^{W}}_{|_{S^{d-1}}}\,\mathrm{d}t,
\]
which finally implies that 
\[
 \frac{1}{\mu_\kappa^W(B_1)}\leqslant C(d+2\gamma)^2\int_0^{\frac{1}{d+2\gamma}}{q_t^{W}}_{|_{S^{d-1}}}\,\mathrm{d}t.
\]
This proves \eqref{CFint1} and  establishes the theorem.
\end{proof}

We now turn to Theorem \ref{firstresult}$(2)$ that we recall below.

\begin{theo} \label{ConstanteForte}
There exists a numerical constant $C$ such that for every $f \in L^p(\mu_\kappa^W)$ with $1<p<+\infty$, 
\[
\bigl\|M_\kappa^Wf\bigr\|_{W,p}\leqslant C\frac{p}{p-1}\sqrt{d+2\gamma}\,\|f\|_{W,p}.
\]
\end{theo}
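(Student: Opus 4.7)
The plan is to dominate $M_\kappa^W f$ pointwise by $C\sqrt{d+2\gamma}$ times the maximal function of the Dunkl-type heat semigroup, and then to invoke the Stein maximal theorem for symmetric diffusion semigroups. The improvement over the factor $d+2\gamma$ obtained in Theorem~\ref{ConstanteFaible} will come from replacing the ergodic average of the generalized Gaussian on the unit sphere by a single, optimally chosen Gaussian.

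First I would look for the smallest $A>0$ and a time $t^\ast>0$ for which
\[
\frac{\chi_{B_1}(y)}{\mu_\kappa^W(B_1)}\leq A\,q_{t^\ast}^W(y),\quad y\in\mathbb{R}^d.
\]
Both sides are radial and $q_{t^\ast}^W$ is decreasing in $|y|$, so the inequality is forced by its value at $|y|=1$, which reads $A\geq (2t^\ast)^{d/2+\gamma}e^{1/(4t^\ast)}/(\mu_\kappa^W(B_1)c_\kappa^W)$. Differentiating the right-hand side in $t^\ast$ yields the critical value $t^\ast=1/(2(d+2\gamma))$; substituting the first two equalities of Lemma~\ref{CF1} and applying Stirling's formula then shows that $A=C\sqrt{d+2\gamma}$ is an admissible choice, with $C$ a numerical constant. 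Scaling by $r$ gives $\chi_{B_r}(y)/\mu_\kappa^W(B_r)\leq C\sqrt{d+2\gamma}\,q_{r^2 t^\ast}^W(y)$ for every $y\in\mathbb{R}^d$.

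Next I would transfer this pointwise bound through the Dunkl translation. Since the functions $C\sqrt{d+2\gamma}\,q_{r^2 t^\ast}^W\pm\chi_{B_r}/\mu_\kappa^W(B_r)$ are bounded, radial, non-negative and in $L^1(\mu_\kappa^W)$, Theorem~\ref{TransRad}(2) applied to each yields, for almost every $y\in\mathbb{R}^d$,
\[
\frac{|\tau_x^W(\chi_{B_r})(-y)|}{\mu_\kappa^W(B_r)}\leq C\sqrt{d+2\gamma}\,\tau_x^W(q_{r^2 t^\ast}^W)(-y).
\]
Multiplying by $|f(y)|$, integrating in $y$ and taking the supremum over $r>0$ (equivalently, over $s=r^2 t^\ast>0$) produces the key pointwise bound
\[
M_\kappa^W f(x)\leq C\sqrt{d+2\gamma}\,\sup_{s>0}H_s^W|f|(x).
\]

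Finally, by Theorem~\ref{diffusion} the family $\{H_s^W\}_{s\geq 0}$ is a symmetric diffusion semigroup, so the Stein maximal theorem (see~\cite{SteinTopics}) furnishes $\|\sup_{s>0}H_s^W g\|_{W,p}\leq C\,(p/(p-1))\|g\|_{W,p}$ for every $1<p\leq +\infty$, with $C$ a numerical constant, and composing with the previous pointwise bound yields the claimed inequality. The main technical point in the transfer step is to justify that $\tau_x^W$ may legitimately be applied to the radial integrable functions above and that the resulting inequality holds pointwise in $y$: this requires an approximation procedure in the spirit of the one used to establish \eqref{tempconst} in the proof of Lemma~\ref{CF2}, relying on the membership of $q_{r^2 t^\ast}^W$ in $\mathscr A_\kappa^W(\mathbb{R}^d)$.
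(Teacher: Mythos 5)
Your proposal is correct and follows essentially the same route as the paper: dominate $\chi_{B_1}/\mu_\kappa^W(B_1)$ by a single Gaussian $q_{t_0}^W$ with the optimal choice $t_0=\tfrac{1}{2(d+2\gamma)}$, quantify the constant $C\sqrt{d+2\gamma}$ via Lemma \ref{CF1} and Stirling's formula, transfer the bound through $\tau_x^W$ using positivity on radial functions, and conclude with Stein's maximal theorem for the symmetric diffusion semi-group $\{H_t^W\}_{t\geqslant0}$. This is exactly the content of Lemma \ref{CFO} and the paper's proof of Theorem \ref{ConstanteForte}.
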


\begin{rem}
This result  is better that what one would obtain by using Theorem \ref{ConstanteFaible}, the $L^\infty$ case and the Marcinkiewicz interpolation theorem.
\end{rem}

In order to prove Theorem \ref{ConstanteForte}, we need the following lemma which reduces the inequality of that theorem to a more convenient one.

\begin{lem} \label{CFO}
If there exists $t_0>0$ such that
\[
 \frac{1}{\mu_\kappa^W(B_1)}\leqslant C(d,\kappa){q_{t_0}^{W}}_{|_{S^{d-1}}},
\]
then there exists a numerical constant $C$ such that for every $p$ satisfying $1<p<+\infty$ and every $f \in L^p(\mu_\kappa^W)$,
\[
 \bigl\|M_\kappa^Wf\bigr\|_{W,p}\leqslant C\frac{p}{p-1}C(d,\kappa)\,\|f\|_{W,p},
\]
where $C(d,\kappa)$ is the same positive constant in both  hypothesis and  conclusion of the lemma.
\end{lem}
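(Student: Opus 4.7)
The strategy is to transpose the argument of Lemma \ref{CF2} in a cleaner setting: since the hypothesis here bounds $1/\mu_\kappa^W(B_1)$ by ${q_{t_0}^W}_{|_{S^{d-1}}}$ itself rather than by its time average, we will obtain a pointwise domination of $M_\kappa^W f$ by $\sup_{t>0}H_t^W f$ directly, rather than by the Cesàro averages of the heat semi-group. The role of the Hopf--Dunford--Schwartz ergodic theorem will then be played by Stein's maximal theorem for symmetric diffusion semi-groups, which is exactly what produces the factor $p/(p-1)$.

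After reducing to $f\geq 0$, I would first translate the hypothesis into the pointwise inequality
\[
\frac{1}{\mu_\kappa^W(B_1)}\chi_{{}_{B_1}}(y)\leq C(d,\kappa)\,q_{t_0}^W(y),\quad y\in\mathbb R^d,
\]
which is immediate since ${q_{t_0}^W}_{|_{S^{d-1}}}\leq q_{t_0}^W(y)$ whenever $\|y\|\leq 1$ and the left-hand side vanishes otherwise. A change of variables, based on the scaling identity $r^{-(d+2\gamma)}q_{t_0}^W(y/r)=q_{r^2 t_0}^W(y)$, upgrades this to
\[
\frac{1}{\mu_\kappa^W(B_r)}\chi_{{}_{B_r}}(y)\leq C(d,\kappa)\,q_{r^2 t_0}^W(y)
\]
for all $r>0$ and all $y\in\mathbb R^d$. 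Both sides are bounded, radial, nonnegative functions in $L^1(\mu_\kappa^W)$, so the positivity statement of Theorem \ref{TransRad} transfers the inequality through $\tau_x^W$. Multiplying by $f(y)$, integrating in $y$ against $\mathrm{d}\mu_\kappa^W(y)$, and recalling that $\tau_x^W q_t^W(-y)=Q_\kappa^W(x,y,t)$ then yields the pointwise domination
\[
M_\kappa^W f(x)\leq C(d,\kappa)\sup_{s>0}H_s^W f(x).
\]
Contrary to Lemma \ref{CF2}, no Fubini-type exchange between a time integration and the Dunkl translation is required here, since no time integral appears.

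To finish, I would invoke Theorem \ref{diffusion} together with Stein's maximal theorem for symmetric diffusion semi-groups (see \cite{SteinTopics}), which yields a bound of the form $\|\sup_{t>0}H_t^W f\|_{W,p}\leq C\frac{p}{p-1}\|f\|_{W,p}$ for $1<p\leq+\infty$; the factor $p/(p-1)$ is produced via Rota's dilation theorem and Doob's $L^p$ maximal inequality for martingales. Combining this bound with the preceding pointwise estimate delivers the conclusion of the lemma with the same constant $C(d,\kappa)$ in hypothesis and conclusion. The only genuinely delicate step is the first one, namely propagating a scalar pointwise inequality through the non-positive operator $\tau_x^W$; this is handled precisely by the radial-positivity part of Theorem \ref{TransRad}.
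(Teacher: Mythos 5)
Your proposal is correct and follows essentially the same route as the paper: pointwise domination of the normalized ball indicator by $q_{r^2t_0}^W$ via scaling, transfer through $\tau_x^W$ using the positivity statement of Theorem \ref{TransRad} applied to the (radial, bounded, nonnegative, integrable) difference, and then Stein's maximal theorem for the symmetric diffusion semi-group $\{H_t^W\}_{t\geqslant0}$ to get the $p/(p-1)$ bound. The paper abbreviates the middle step as ``the same arguments as in Lemma \ref{CF2}''; you have simply written those arguments out, correctly noting that no interchange of the time integral with $\tau_x^W$ is needed here.
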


\begin{proof}
The proof is quite similar to the one of Lemma \ref{CF2}. One can assume that $f$ is nonnegative. If there exists $t_0>0$ such that 
\begin{equation} \label{tailleint2}
 \frac{1}{\mu_\kappa^W(B_1)}\leqslant C(d,\kappa){q_{t_0}^{W}}_{|_{S^{d-1}}},
\end{equation}
then we can deduce that for every $y \in \mathbb{R}^d$,
\begin{equation} \label{tailleint3}
 \frac{1}{\mu_\kappa^W(B_1)}\chi_{{}_{B_1}}(y)\leqslant C(d,\kappa)q_{t_0}^{W}(y).
\end{equation}
Indeed, if $\|y\|>1$, it is obvious since $\chi_{{}_{B_1}}(y)=0$. If $\|y\|\leqslant1$, it is enough to use \eqref{tailleint2} and the fact that ${q_{t_0}^{W}}_{|_{S^{d-1}}}\leqslant q_{t_0}^{W}(y)$.

Consequently, for every $r>0$, we can write
\begin{align*}
 \frac{1}{\mu_\kappa^W(B_r)}\chi_{{}_{B_r}}(y)&=\frac{1}{r^{d+2\gamma}\mu_\kappa^W(B_1)}\chi_{{}_{B_1}}\Bigl(\frac{y}{r}\Bigr)\\
 &\leqslant \frac{C(d,\kappa)}{r^{d+2\gamma}}q_{t_0}^{W}\Bigl(\frac{y}{r}\Bigr)=C(d,\kappa)q_{r^2t_0}^{W}(y).
\end{align*}
Let $x \in \mathbb R^d$. Due to Theorem \ref{TransRad}$(2)$ we can assert that
\[
\frac{1}{\mu_\kappa^W(B_r)}\tau_x^{W}(\chi_{{}_{B_r}})(-y)\leqslant C(d,\kappa)\tau_x^{W}(q^{W,\kappa}_{r^2t_0})(-y).
\]
Multiplying both sides by $f(y)$ and integrating over $\mathbb R^d$ we obtain
\[
\frac{1}{\mu_\kappa^W(B_r)}\int_{\mathbb R^d}f(y)\tau_x^{W}(\chi_{{}_{B_r}})(-y)\,\mathrm{d}\mu_\kappa^W(y)\leqslant C(d,\kappa)H^{W,\kappa}_{r^2t_0}f(x),
\]
from which we deduce
\begin{equation} \label{CFOint1}
M_\kappa^Wf(x)\leqslant C(d,\kappa)\sup_{t>0}H_t^{W}f(x).
\end{equation}
But $\{H^{W}_t\}_{t\geqslant 0}$ is a symmetric diffusion semigroup  on $L^p(\mu_\kappa^W)$ (Theorem \ref{diffusion}). Therefore, by a result due to Stein (see \cite[chapter 4]{SteinTopics}),  for every $p$ satisfying $1<p<+\infty$,
\[
\bigl\|\sup_{t>0}H_t^{W}f\bigr\|_{W,p}\leqslant C\frac{p}{p-1}\|f\|_{W,p} ,
\]
where $C$ is a numerical constant. By using this inequality in \eqref{CFOint1} we obtain the desired result. 
\end{proof}

We can now turn to the proof of Theorem \ref{ConstanteForte}.

\begin{proof}
Thanks to the previous lemma, it is enough to find $t_0>0$ such that 
\[
\frac{1}{\mu_\kappa^W(B_1)}\leqslant C\sqrt{d+2\gamma}\,{q_{t_0}^{W}}_{|_{S^{d-1}}}
\]
or, equivalently, such that
\[
\frac{1}{\mu_\kappa^W(B_1)}\leqslant Cc_\kappa^W\sqrt{d+2\gamma}\Bigl(\frac{1}{2t_0}\Bigr)^{\frac{d}{2}+\gamma}\mathrm{e}^{-\frac{1}{4t_0}}.
\]
On one hand, the first two equalities of Lemma \ref{CF1} allow us to write
\[
\frac{1}{\mu_\kappa^W(B_1)}=c_\kappa^W(d+2\gamma)2^{\frac{d}{2}+\gamma-1}\Gamma\Bigl(\frac{d}{2}+\gamma\Bigr), 
\]
and on the other hand, Stirling's formula gives us 
\[
 2^\frac{n}{2}\Gamma\Bigl(\frac{n}{2}\Bigr)=\underset{n \to +\infty}{\mathrm{O}}\bigl(n^{\frac{n-1}{2}}\mathrm{e}^{-\frac{n}{2}}\bigr).
\]
We finally obtain the desired result by choosing $t_0=\tfrac{1}{2d+4\gamma}$.
\end{proof}

\section{Exponential integrability in the vector-valued case}

This section is devoted to the proof of Theorem \ref{expint} in the case where the reflection group is $\mathbb Z_2^d$ (that is, the root system is $\mathcal R=\{\pm e_j:1\leqslant j\leqslant d\}$). Let us recall some facts related to Dunkl's analysis associated with this particular reflection group.

In this case, an explicit formula of the intertwining operator $V_\kappa^{\mathbb Z_2^d}$ is known (see \cite{XuEntrelacement}) and there is an explicit formula for the Dunkl translation. In the one-dimensional case, the following formula has been proved by Rösler in the setting of signed hypergroups (see \cite{RoslerHyp}):
\begin{multline} \label{extau}
\tau^{\mathbb Z_2}_xf(y)=\frac{1}{2}\int_{-1}^1f\Bigl(\sqrt{x^2+y^2+2xyt}\Bigr)\biggl(1+\frac{x+y}{\sqrt{x^2+y^2+2xyt}}\biggr)\psi_\kappa(t)\,\mathrm{d}t \\
+\frac{1}{2}\int_{-1}^1f\Bigl(-\sqrt{x^2+y^2+2xyt}\Bigr)\biggl(1-\frac{x+y}{\sqrt{x^2+y^2+2xyt}}\biggr)\psi_\kappa(t)\,\mathrm{d}t
\end{multline}
where $\psi$ is given by $\psi_\kappa(t)=\bigl(B\bigl(\kappa,\frac{1}{2}\bigr)\bigr)^{-1}(1+t)(1-t^2)^{\kappa-1}$ (with $B$ the beta function) and where $\kappa$ is the only value taken by the multiplicity function $\kappa$. This formula implies an explicit one in the case $\mathbb Z_2^d$ which gives us the boundedness of the Dunkl translation. In order to give an equivalent formula to \eqref{extau}, we need to introduce some notation.

\begin{nota}
\mbox{}
\begin{enumerate} 
\item
For $x,y,z \in \mathbb R$, we put 
\[\sigma_{x,y,z}=
\begin{cases}
\frac{1}{2xy}(x^2+y^2-z^2)& \text{if } x,y \neq 0, \\
0& \text{if } x=0 \text{ or } y=0,
\end{cases}
\]
as well as
\[
\varrho(x,y,z)=\frac{1}{2}(1-\sigma_{x,y,z}+\sigma_{z,x,y}+\sigma_{z,y,x}).
\]
\item
For $x,y,z>0$, we put
\[
K_\kappa(x,y,z)=2^{2\kappa-2}\Bigl(B\Bigl(\kappa,\frac{1}{2}\Bigr)\Bigr)^{-1}\frac{\Delta(x,y,z)^{2\kappa-2}}{(xyz)^{2\kappa-1}}\chi_{{}_{[|x-y|,x+y]}}(z),
\]
where  $\Delta(x,y,z)$ denotes the area of the triangle (perhaps degenerated) with sides $x,y,z$.
\end{enumerate}
\end{nota} 
 
With this notation, \eqref{extau} can be reformulated (using a change of variables) as follows:
\begin{equation} \label{extaubis}
\tau^{\mathbb Z_2}_xf(y)=\int_{\mathbb R}f(z)\,\mathrm{d}\nu^{\mathbb Z_2}_{x,y}(z)
\end{equation}
where the measure $\nu^{\mathbb Z_2}_{x,y}$ is given by
\[
\mathrm d\nu^{\mathbb Z_2}_{x,y}(z)=
\begin{cases}
\mathcal K_\kappa(x,y,z)\,\mathrm{d}\mu^{\mathbb Z_2}_\kappa(z)& \text{if } x,y \neq 0, \\
\mathrm d\delta_x(z)& \text{if } y=0, \\
\mathrm d\delta_y(z)& \text{if } x=0,
\end{cases}
\]
with 
\[
\mathcal K_\kappa(x,y,z)=K_\kappa\bigl(|x|,|y|,|z|\bigr)\varrho(x,y,z).
\]
We have the following one-dimensional result due to  Rösler, .

\begin{theo} \label{mesros}
The measure $\nu^{\mathbb Z_2}_{x,y}$ satisfies
\begin{enumerate}
\item $\mathrm{supp}\,\nu^{\mathbb Z_2}_{x,y}=\Bigl[-|x|-|y|,-\bigl||x|-|y|\bigr|\Bigr]\bigcup\Bigl[\bigl||x|-|y|\bigr|,|x|+|y|\Bigr]$ for $x, y \neq 0$.
\item $\nu^{\mathbb Z_2}_{x,y}(\mathbb R)=1$ and $\|\nu^{\mathbb Z_2}_{x,y}\|=\int_{\mathbb R}|\mathrm d\nu^{\mathbb Z_2}_{x,y}|\leqslant 4$, for $x, y \in \mathbb R$.
\end{enumerate}
\end{theo}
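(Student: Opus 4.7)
Set $g=\|f\|_{\ell^r}^r$, $g^{\ast}=\|\mathcal{M}_\kappa^{\mathbb{Z}_2^d}f\|_{\ell^r}^r$ and $N=\bigl\|g\bigr\|_{\mathbb{Z}_2^d,\infty}$. The first step is the layer-cake identity
\[
\int_K\mathrm{e}^{\varepsilon g^{\ast}(x)}\,\mathrm{d}\mu_\kappa^{\mathbb{Z}_2^d}(x)=\mu_\kappa^{\mathbb{Z}_2^d}(K)+\varepsilon\int_0^{+\infty}\mathrm{e}^{\varepsilon s}\,\mu_\kappa^{\mathbb{Z}_2^d}\bigl(\{x\in K:g^{\ast}(x)>s\}\bigr)\,\mathrm{d}s,
\]
which reduces the problem to proving the geometric decay
\[
\mu_\kappa^{\mathbb{Z}_2^d}\bigl(\{x\in K:g^{\ast}(x)>s\}\bigr)\leqslant M_K\cdot 2^{-s/(2C_{d,\kappa,r}N)}\qquad(s\geqslant 0),
\]
where $M_K=\max\{2\mu_\kappa^{\mathbb{Z}_2^d}(K);\mu_\kappa^{\mathbb{Z}_2^d}(\mathrm{supp}\,g)\}$. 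Indeed, inserting this bound into the layer-cake formula produces an integral convergent precisely when $\varepsilon<\log 2/(2C_{d,\kappa,r}N)$, with value $\mu_\kappa^{\mathbb{Z}_2^d}(K)+2\varepsilon C_{d,\kappa,r}N\,M_K/(\log 2-2\varepsilon C_{d,\kappa,r}N)$, which is exactly the right-hand side of the statement.

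To establish the geometric decay I would employ a doubling iteration in the spirit of the Fefferman--Stein argument of~\cite{FeffermanStein} (see also~\cite[page~75]{SteinGros}). The core is the inequality
\[
\mu_\kappa^{\mathbb{Z}_2^d}\bigl(\{x\in K:g^{\ast}(x)>s+2C_{d,\kappa,r}N\}\bigr)\leqslant\tfrac{1}{2}\,\mu_\kappa^{\mathbb{Z}_2^d}\bigl(\{x\in K:g^{\ast}(x)>s\}\bigr)\qquad(s\geqslant 0),
\]
which, iterated from the trivial initial estimate $\mu_\kappa^{\mathbb{Z}_2^d}(\{g^{\ast}>0\}\cap K)\leqslant\mu_\kappa^{\mathbb{Z}_2^d}(K)\leqslant\tfrac{1}{2}M_K$ and interpolated across each interval $[2kC_{d,\kappa,r}N,2(k+1)C_{d,\kappa,r}N]$, yields the displayed geometric bound with the prefactor $M_K$. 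The doubling inequality itself is obtained by the Fefferman--Stein truncation trick: at each level $s$, write $f=f^{(s)}+f^{[s]}$ with $f_n^{[s]}=f_n\,\chi_{\{g\leqslant c_{d,\kappa,r}s\}}$ for a constant $c_{d,\kappa,r}$ to be chosen, so that $\|f^{[s]}\|_{\ell^r}^r\leqslant c_{d,\kappa,r}s$ pointwise; the contribution of $f^{[s]}$ to $g^{\ast}$ is estimated via the strong-$(p,p)$ part of Theorem~\ref{FS} applied with a well-chosen auxiliary exponent $p>1$, while the principal part $f^{(s)}$ is handled through the weak-$(1,1)$ part of the same theorem, both quantities being calibrated so that the measure of $\{g^{\ast}>s+2C_{d,\kappa,r}N\}\cap K$ is at most one-half of the measure of $\{g^{\ast}>s\}\cap K$.

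The main obstacle, and the reason the scalar Hardy-Littlewood proof cannot simply be transposed, lies in this truncation step: contrary to the scalar setting the pointwise bound $\|f^{[s]}\|_{\ell^r}^r\leqslant c_{d,\kappa,r}s$ does \emph{not} entail $\|\mathcal{M}_\kappa^{\mathbb{Z}_2^d}f^{[s]}\|_{\ell^r}^r\leqslant c_{d,\kappa,r}s$, because the $\ell^r$-sum of the componentwise Dunkl maximal functions is in no way controlled by the supremum of the $\ell^r$-norm of the initial sequence. One is consequently forced to appeal to a genuinely vector-valued maximal inequality (Theorem~\ref{FS}), and the careful choice of the cut-off constant $c_{d,\kappa,r}$ and of the auxiliary exponent $p$ is what fixes the explicit value of $C_{d,\kappa,r}$ and, with it, the critical threshold $\log 2/(2C_{d,\kappa,r}N)$ beyond which the exponential integral ceases to converge.
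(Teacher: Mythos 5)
There is a fundamental mismatch: your proposal does not prove the statement in question at all. The statement is Theorem \ref{mesros}, a structural result about the measure $\nu^{\mathbb Z_2}_{x,y}$ that represents the one-dimensional Dunkl translation via \eqref{extaubis}: it asserts that $\mathrm{supp}\,\nu^{\mathbb Z_2}_{x,y}$ is the union of the two intervals $\bigl[-|x|-|y|,-\bigl||x|-|y|\bigr|\bigr]$ and $\bigl[\bigl||x|-|y|\bigr|,|x|+|y|\bigr]$, that $\nu^{\mathbb Z_2}_{x,y}(\mathbb R)=1$, and that the total variation satisfies $\|\nu^{\mathbb Z_2}_{x,y}\|\leqslant 4$. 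What you have written instead is a proof sketch of Theorem \ref{expint} (the exponential integrability of $\|\mathcal M_\kappa^{\mathbb Z_2^d}f\|_{\ell^r}^r$ on compact sets): you introduce $g=\|f\|_{\ell^r}^r$, the layer-cake identity, a geometric-decay estimate for the distribution function, and a Fefferman--Stein truncation argument. None of this touches the kernel $\mathcal K_\kappa(x,y,z)=K_\kappa(|x|,|y|,|z|)\varrho(x,y,z)$, the quantities $\sigma_{x,y,z}$ and $\varrho$, or the measure $\nu^{\mathbb Z_2}_{x,y}$, so nothing in your text can establish the support identity, the normalization, or the bound by $4$. (For the record, the paper itself does not reprove this theorem either: it is quoted from R\"osler's work on signed hypergroups, cited as \cite{RoslerHyp}.)

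To actually prove the statement you would argue along the following lines. The support assertion comes from the factor $\chi_{{}_{[|x-y|,x+y]}}(z)$ built into $K_\kappa$: the density $\mathcal K_\kappa(x,y,z)$ vanishes unless $\bigl||x|-|y|\bigr|\leqslant|z|\leqslant|x|+|y|$, which is exactly the stated union of two intervals, and one checks that the density is not identically zero on any relatively open piece of that set. The normalization $\nu^{\mathbb Z_2}_{x,y}(\mathbb R)=1$ follows by applying the translation formula \eqref{extau} to $f\equiv 1$: the two integrals combine to $\int_{-1}^{1}\psi_\kappa(t)\,\mathrm{d}t=1$, since the odd part of $\psi_\kappa$ integrates to zero and $B\bigl(\kappa,\tfrac12\bigr)$ normalizes the even part. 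Finally, for the total variation one observes that on the support each of $\sigma_{x,y,z}$, $\sigma_{z,x,y}$, $\sigma_{z,y,x}$ lies in $[-1,1]$ (this is the triangle inequality for the sides $|x|,|y|,|z|$), whence $|\varrho(x,y,z)|\leqslant 2$, and the positive kernel $K_\kappa(|x|,|y|,|\cdot|)$ integrates to $2$ against $\mu_\kappa^{\mathbb Z_2}$ over $\mathbb R$ (it is the Bessel--Kingman product-formula kernel, of mass $1$ on each half-line), giving $\|\nu^{\mathbb Z_2}_{x,y}\|\leqslant 4$. I would also point out that your sketch of Theorem \ref{expint} itself deviates from the paper's route: the paper obtains the geometric decay (its Lemma \ref{ie3}) not by a good-$\lambda$/truncation iteration but by Chebyshev's inequality combined with the $L^p$ vector-valued bound of Theorem \ref{FS} and the growth estimate $C(d,\kappa,p,r)=O(p^{1/r})$ of Lemma \ref{ie2}, optimizing in $p$ --- but that is a separate discussion from the statement you were asked to prove.
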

We now prove that no analogue of Theorem \ref{FS}$(2)$ holds when $p=+\infty$.

\begin{pr}
The conclusion of the second point of Theorem \ref{FS} does not hold when $p=+\infty$.
\end{pr}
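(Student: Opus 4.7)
The plan is to exhibit a single sequence $f=(f_n)_{n\geqslant 1}$ of characteristic functions of disjoint annuli centered at the origin whose vector-valued Dunkl maximal function blows up near $0$, exploiting the fact that the Dunkl translation at $x=0$ is the identity. Concretely, I set $E_n=\{y\in\mathbb{R}^d:2^{-n-1}\leqslant\|y\|\leqslant 2^{-n}\}$ for $n\geqslant 1$ and $f_n=\chi_{E_n}$. Since the $E_n$ are pairwise disjoint, $\|f(x)\|_{\ell^r}^r=\sum_n\chi_{E_n}(x)\leqslant 1$ for every $x$, hence $\|f\|_{\ell^r}\in L^\infty(\mu_\kappa^{\mathbb{Z}_2^d})$ with norm at most $1$.

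The next step is to bound $M_\kappa^{\mathbb{Z}_2^d}f_n(0)$ from below. The key observation is that $\tau_0^{\mathbb{Z}_2^d}=\mathrm{Id}$, which follows either from $E_\kappa^{\mathbb{Z}_2^d}(0,\cdot)\equiv 1$ combined with the Fourier-side definition of the Dunkl translation, or directly from the explicit formula \eqref{extaubis} (where $\nu^{\mathbb{Z}_2}_{0,y}$ degenerates to $\delta_y$) tensorized over coordinates. Selecting the radius $r=2^{-n}$ in the supremum defining $M_\kappa^{\mathbb{Z}_2^d}f_n(0)$ and using the $(d+2\gamma)$-homogeneity of $\mu_\kappa^{\mathbb{Z}_2^d}$ yields
\[
M_\kappa^{\mathbb{Z}_2^d}f_n(0)\geqslant\frac{\mu_\kappa^{\mathbb{Z}_2^d}(E_n)}{\mu_\kappa^{\mathbb{Z}_2^d}(B_{2^{-n}})}=1-2^{-(d+2\gamma)}=:c_0>0,
\]
uniformly in $n$. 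Summing over $n$ gives $\|\mathcal{M}_\kappa^{\mathbb{Z}_2^d}f(0)\|_{\ell^r}^r\geqslant\sum_{n\geqslant 1}c_0^r=+\infty$.

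Finally, I would upgrade this pointwise divergence at a single point to the essential-supremum norm by lower semicontinuity. Each $x\mapsto M_\kappa^{\mathbb{Z}_2^d}f_n(x)$ is the supremum of continuous averages (continuity in $x$ being routine from the integral representation \eqref{extaubis}), hence lower semicontinuous; the countable sum $\sum_n(M_\kappa^{\mathbb{Z}_2^d}f_n)^r$ is therefore LSC as a nonnegative sum of LSC functions. Being equal to $+\infty$ at the origin, it must equal $+\infty$ on an open neighborhood of $0$, which has positive $\mu_\kappa^{\mathbb{Z}_2^d}$-measure. This yields $\bigl\|\,\|\mathcal{M}_\kappa^{\mathbb{Z}_2^d}f\|_{\ell^r}\bigr\|_{\mathbb{Z}_2^d,\infty}=+\infty$ while $\bigl\|\,\|f\|_{\ell^r}\bigr\|_{\mathbb{Z}_2^d,\infty}\leqslant 1$, contradicting the $p=+\infty$ version of the inequality.

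The main subtlety I anticipate is justifying rigorously that $\tau_0^{\mathbb{Z}_2^d}(\chi_{B_r})=\chi_{B_r}$ in the pointwise sense needed by the definition of $M_\kappa^{\mathbb{Z}_2^d}$, since $\chi_{B_r}$ does not obviously belong to $\mathscr{A}_\kappa^{\mathbb{Z}_2^d}(\mathbb{R}^d)$; the explicit $\mathbb{Z}_2^d$ formula \eqref{extaubis} together with its tensor product in dimension $d$ settles this directly, bypassing any Fourier-theoretic difficulty. The continuity of the averages needed for the LSC upgrade is then a routine consequence of the same integral representation.
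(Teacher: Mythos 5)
Your construction is genuinely different from the paper's and is essentially sound, but it contains one false intermediate claim and one step that deserves more than a wave of the hand. The paper takes $d=1$ and dyadic intervals $[2^{n-1},2^n[$ going to infinity; for an arbitrary $x$ it chooses the $x$-dependent radius $|x|+2^n$ and uses the support and total-mass properties of the measure $\nu^{\mathbb Z_2}_{x,-y}$ to show $\tau_x^{\mathbb Z_2}(\chi_{]-|x|-2^n,|x|+2^n[})(-y)=1$, which makes $M_\kappa^{\mathbb Z_2}f_n(x)$ bounded below uniformly in $n$ for all $|x|\leqslant 2^n$, so the maximal function is infinite at \emph{every} point and no measure-theoretic upgrade is needed. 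You instead shrink the annuli toward the origin, exploit $\tau_0^{W}=\mathrm{Id}$ to get the uniform lower bound $1-2^{-(d+2\gamma)}$ at the single point $x=0$ (this part is correct, and it works in every dimension, which the paper's argument does not address), and then must propagate the divergence from a null set to a set of positive measure.

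Two points in that last step. First, your assertion that a lower semicontinuous function equal to $+\infty$ at the origin ``must equal $+\infty$ on an open neighborhood of $0$'' is false: the function $g(x)=1/\|x\|$, $g(0)=+\infty$, is lower semicontinuous and finite off the origin. What lower semicontinuity actually gives you is that for every $M>0$ the superlevel set $\{x:g(x)>M\}$ is open and contains $0$, hence has positive $\mu_\kappa^{W}$-measure, so $\mathrm{ess\,sup}\,g\geqslant M$ for all $M$ and therefore $\|g\|_{W,\infty}=+\infty$. That is all you need, so the gap is repaired in one line, but as written the step is wrong. Second, the continuity in $x$ of the averages $x\mapsto\mu_\kappa^W(B_r)^{-1}\int f_n(y)\tau_x^{W}(\chi_{{}_{B_r}})(-y)\,\mathrm{d}\mu_\kappa^W(y)$ is not obviously ``routine from \eqref{extaubis}'': the integrand $\chi_{{}_{B_r}}$ is discontinuous and the measures $\nu^{\mathbb Z_2}_{x,-y}$ degenerate to Dirac masses as $x\to0$, so a pointwise-kernel argument is delicate precisely at the point you care about. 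A cleaner justification is by Plancherel: $\|\tau_x^W\chi_{{}_{B_r}}-\tau_{x_0}^W\chi_{{}_{B_r}}\|_{W,2}^2=\int|E_\kappa^W(ix,\xi)-E_\kappa^W(ix_0,\xi)|^2|\mathcal F_\kappa^W(\chi_{{}_{B_r}})(\xi)|^2\,\mathrm{d}\mu_\kappa^W(\xi)\to0$ by dominated convergence (using $|E_\kappa^W(ix,\xi)|\leqslant1$ and continuity of the kernel), and since each $f_n\in L^2(\mu_\kappa^W)$ the averages are continuous in $x$, hence each $M_\kappa^{W}f_n$ is lower semicontinuous as a supremum of continuous functions. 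With these two repairs your proof is complete, and it has the merit of working for all $d$ and of isolating the elementary mechanism ($\tau_0^W=\mathrm{Id}$) behind the failure at $p=+\infty$; the paper's argument is heavier on the explicit $\mathbb Z_2$ translation kernel but yields the stronger conclusion that the maximal function is everywhere infinite.
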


\begin{proof}
Let $d=1$ and let $f=(f_n)_{n\geqslant1}$ where  $f_n=\chi_{{}_{[2^{n-1},2^n[}}$ for every $n\geqslant1$. We easily see that 
\[
\|f\|_{\ell^r}=\chi_{{}_{[1,+\infty[}} \in L^\infty(\mu_\kappa^{\mathbb Z_2}) 
\]
while we will prove that
\begin{equation} \label{linfty} 
\|\mathcal M^{\mathbb Z_2}_\kappa f\|_{\ell^r} \notin L^\infty(\mu_\kappa^{\mathbb Z_2}).
\end{equation} 
For all $x \in \mathbb R$ and  $n\geqslant 1$,  by definition of $M^{\mathbb Z_2}_\kappa\chi_{{}_{[2^{n-1},2^n[}}$,
\begin{multline*}
M^{\mathbb Z_2}_\kappa\chi_{{}_{[2^{n-1},2^n[}}(x)\\
\geqslant\frac{1}{\mu_\kappa^{\mathbb Z_2}(]-|x|-2^{n},|x|+2^n[)}\int_{2^{n-1}}^{2^n}\tau^{\mathbb Z_2}_x(\chi_{{}_{]-|x|-2^n,|x|+2^n[}})(-y)\,\mathrm{d}\mu_\kappa^{\mathbb Z_2}(y). 
\end{multline*}
We claim that for every $y \in [2^{n-1},2^n[$,
\begin{equation} \label{tau1}
\tau^{\mathbb Z_2}_x(\chi_{{}_{]-|x|-2^n,|x|+2^n[}})(-y)=1. 
\end{equation}
Indeed,  by using \eqref{extaubis} we can write
\begin{multline*}
2^{2-2\kappa}B\Bigl(\kappa,\frac{1}{2}\Bigr)\tau^{\mathbb Z_2}_x(\chi_{{}_{]-|x|-2^n,|x|+2^n[}})(-y)\\
=\int_{\mathbb R}\chi_{{}_{]-|x|-2^n,|x|+2^n[}}(z)\chi_{{}_{]||x|-|y||,|x|+|y|[}}(|z|)\frac{\Delta(|x|,|y|,|z|)^{2\kappa-2}}{(|xyz|)^{2\kappa-1}}\varrho{(x,-y,z)}\,\mathrm{d}\mu_\kappa^{\mathbb Z_2}(z),
\end{multline*}
and, since $|x|+|y|<|x|+2^{n}$, we  deduce that
\begin{multline*}
 2^{2-2\kappa}B\Bigl(\kappa,\frac{1}{2}\Bigr)\tau^{\mathbb Z_2}_x(\chi_{{}_{]-|x|-2^n,|x|+2^n[}})(-y)\\
=\int_{\mathbb R}\chi_{{}_{]||x|-|y||,|x|+|y|[}}(|z|)\frac{\Delta(|x|,|y|,|z|)^{2\kappa-2}}{(|xyz|)^{2\kappa-1}}\varrho{(x,-y,z)}\,\mathrm{d}\mu_\kappa^{\mathbb Z_2}(z),
\end{multline*}
that is,
\[
 \tau^{\mathbb Z_2}_x(\chi_{{}_{]-|x|-2^n,|x|+2^n[}})(-y)=\nu^{\mathbb Z_2}_{x,-y}(\mathbb R).
\]
We obtain \eqref{tau1} by using Theorem \ref{mesros}.
As a result we have the inequality
\[
M^{\mathbb Z_2}_\kappa\chi_{{}_{[2^{n-1},2^n[}}(x)\geqslant\frac{1}{\mu_\kappa^{\mathbb Z_2}(]-|x|-2^{n},|x|+2^n[)}\int_{2^{n-1}}^{2^n}\mathrm{d}\mu_\kappa^{\mathbb Z_2}(y),
\]
and since $\mathrm{d}\mu_\kappa^{\mathbb Z_2}(y)=|y|^{2\kappa}\mathrm{d}y$, we are led to
\[
 M^{\mathbb Z_2}_\kappa\chi_{{}_{[2^{n-1},2^n[}}(x)\geqslant\frac{2^{n(2\kappa+1)}-2^{(n-1)(2\kappa+1)}}{2(|x|+2^{n})^{2\kappa+1}}.
\]
For every $x$ satisfying  $|x|\leqslant2^{n}$ we thus obtain
\[
 M^{\mathbb Z_2}_\kappa\chi_{{}_{[2^{n-1},2^n[}}(x)\geqslant\frac{2^{n(2\kappa+1)}-2^{(n-1)(2\kappa+1)}}{2^{(n+1)(2\kappa+1)+1}},
\]
and after simplifications  we get 
\[
M^{\mathbb Z_2}_\kappa\chi_{{}_{[2^{n-1},2^n[}}(x)\geqslant\frac{1-2^{-(2\kappa+1)}}{2^{2\kappa+2}}.
\]
Thus,  for every $x \in \mathbb R$,
\[
\|\mathcal M^{\mathbb Z_2}_\kappa f\|_{\ell^r}^r\geqslant\sum_{\{n:2^{n}\geqslant|x|\}}\Bigl(\frac{1-2^{-(2\kappa+1)}}{2^{2\kappa+2}}\Bigr)^r=+\infty,
\]
and the proof of \eqref{linfty} is finished.
\end{proof}

A substitute result in the case  $p=+\infty$ is  given in Theorem \ref{expint} that we now prove. In order to do that, we need three lemmas. The first one is just a trivial functional equality.

\begin{lem} \label{ie1}
 Let $X$ be a measure space and let $m$ be a positive measure on $X$. Let $\varphi$ be an increasing continuously differentiable function on $[0,+\infty[$ which satisfies $\varphi(0)=0$. Then for every measurable function $f$ on $X$, 
\[
 \int_X\varphi\bigl(|f(x)|\bigr)\,\mathrm{d}m(x)=\int_0^{+\infty}\varphi'(\lambda)m\Bigl(\Bigl\{x \in X: |f(x)|>\lambda\Bigr\}\Bigr)\,\mathrm{d}\lambda.
\]
\end{lem}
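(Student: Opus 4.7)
The plan is to prove this via the standard layer-cake (Cavalieri) identity. Since $\varphi$ is continuously differentiable on $[0,+\infty[$ with $\varphi(0)=0$, the fundamental theorem of calculus gives, for every $x \in X$,
\[
\varphi\bigl(|f(x)|\bigr)=\int_0^{|f(x)|}\varphi'(\lambda)\,\mathrm{d}\lambda=\int_0^{+\infty}\varphi'(\lambda)\chi_{{}_{\{\lambda<|f(x)|\}}}(\lambda)\,\mathrm{d}\lambda.
\]
First I would integrate this identity pointwise against $\mathrm{d}m(x)$ on $X$. The integrand on the right-hand side is jointly measurable in $(x,\lambda)$ and, since $\varphi$ is increasing, $\varphi'$ is nonnegative; therefore the double integral is a well-defined element of $[0,+\infty]$ to which the Tonelli theorem applies.

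Next I would apply Tonelli to interchange the order of integration:
\[
\int_X\varphi\bigl(|f(x)|\bigr)\,\mathrm{d}m(x)=\int_0^{+\infty}\varphi'(\lambda)\biggl(\int_X\chi_{{}_{\{|f(x)|>\lambda\}}}(x)\,\mathrm{d}m(x)\biggr)\,\mathrm{d}\lambda,
\]
and the inner integral is precisely $m\bigl(\{x \in X:|f(x)|>\lambda\}\bigr)$, which yields the announced equality.

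There is essentially no obstacle: the statement is a direct consequence of the fundamental theorem of calculus combined with Tonelli's theorem, which is why the author describes it as a ``trivial functional equality''. The only mild point to mention is that the nonnegativity of $\varphi'$ (granted by the hypothesis that $\varphi$ is increasing) is exactly what legitimises the use of Tonelli rather than Fubini, so no integrability assumption on $f$ or $\varphi(|f|)$ is needed for the formula to hold in $[0,+\infty]$.
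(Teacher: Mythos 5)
Your proof is correct and follows essentially the same route as the paper: the author likewise writes $\int_0^{+\infty}\varphi'(\lambda)\,m(\{|f|>\lambda\})\,\mathrm{d}\lambda=\int_X\bigl(\int_0^{|f(x)|}\varphi'(\lambda)\,\mathrm{d}\lambda\bigr)\,\mathrm{d}m(x)$ using the nonnegativity of $\varphi'$ to justify the interchange, and then concludes via the fundamental theorem of calculus and $\varphi(0)=0$. Your remark that it is really Tonelli (so no integrability hypothesis is needed) is a welcome precision over the paper's loose invocation of ``Fubini''.
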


\begin{proof}
 Since $\varphi'$ is nonnegative, the Fubini theorem yields
\[
\int_0^{+\infty}\varphi'(\lambda)m\Bigl(\Bigl\{x \in X: |f(x)|>\lambda\Bigr\}\Bigr)\,\mathrm{d}\lambda=\int_X\biggl(\int_0^{|f(x)|}\varphi'(\lambda)\,\mathrm{d}\lambda\biggr)\,\mathrm{d}m(x). 
\]
 The desired result follows by integrating and using the fact that $\varphi(0)=0$.
\end{proof}

The second lemma gives us the behavior of the constant of  Theorem \ref{FS}$(2)$ when $p$ grows and $r$ is fixed. The result of exponential integrability is closely related to this behavior.

\begin{lem} \label{ie2}
The constant $C(d,\kappa,p,r)$ of the vector-valued maximal theorem is such that
\[
C(d,\kappa,p,r)=\underset{p\to+\infty}{O}\bigl(p^{\frac{1}{r}}\bigr). 
\]
 \end{lem}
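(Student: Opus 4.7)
The strategy is to dominate the Dunkl-type Fefferman-Stein operator by the vector-valued maximal operator of the Dunkl heat semi-group, and then to analyze this latter using the symmetric diffusion structure, keeping careful track of the $p$-dependence.

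First, exactly as in the proof of Lemma~\ref{CFO}, for every nonnegative $f$ we have the pointwise domination
\[
M_\kappa^{\mathbb{Z}_2^d}f(x)\leqslant C(d,\kappa)\sup_{t>0}H_t^{\mathbb{Z}_2^d}f(x),
\]
with $C(d,\kappa)$ independent of $p$ and $r$. Since $|M_\kappa^{\mathbb{Z}_2^d}f_n|\leqslant M_\kappa^{\mathbb{Z}_2^d}|f_n|$ (a consequence of the nonnegativity of $\tau_x^{\mathbb{Z}_2^d}(\chi_{{}_{B_r}})$ given by Theorem~\ref{TransRad}), applying this inequality componentwise to each $|f_n|$ and then passing to the $\ell^r$-norm yields
\[
\bigl\|\mathcal{M}_\kappa^{\mathbb{Z}_2^d}f(x)\bigr\|_{\ell^r}\leqslant C(d,\kappa)\Bigl\|\bigl(\sup_{t>0}H_t^{\mathbb{Z}_2^d}|f_n|(x)\bigr)_{n\geqslant 1}\Bigr\|_{\ell^r}.
\]
Hence $C(d,\kappa,p,r)\leqslant C(d,\kappa)\,A(p,r)$, where $A(p,r)$ denotes the operator norm on $L^p(\mu_\kappa^{\mathbb{Z}_2^d};\ell^r)$ of the vector-valued semi-group maximal operator $(g_n)\mapsto(\sup_{t}H_t^{\mathbb{Z}_2^d}g_n)_{n\geqslant 1}$.

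Since $\{H_t^{\mathbb{Z}_2^d}\}_{t\geqslant0}$ is a symmetric diffusion semi-group by Theorem~\ref{diffusion}, Stein's vector-valued maximal theorem for such semi-groups (\cite{SteinTopics}, Chapter~IV) provides the $L^p(\ell^r)$-boundedness for $1<p,r<\infty$. To extract the sharp $p$-behavior I would rely on two ingredients of Stein's proof: on the one hand, the scalar maximal bound $\|\sup_{t>0}H_t^{\mathbb{Z}_2^d}g\|_{\mathbb{Z}_2^d,p}\leqslant A_p\|g\|_{\mathbb{Z}_2^d,p}$, in which $A_p$ stays bounded as $p\to+\infty$; on the other hand, the trivial strong-$(r,r)$ vector estimate obtained by Fubini from the scalar $L^r$-bound. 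A Marcinkiewicz-type interpolation between the endpoint at $p=r$ (with constant bounded in $p$) and a large-$p$ endpoint furnished by the uniform scalar bound produces the factor $p^{1/r}$, yielding $A(p,r)=O(p^{1/r})$ as $p\to+\infty$ with $r$ fixed.

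The main obstacle is the precise tracking of constants in this interpolation. Indeed, a direct Marcinkiewicz interpolation applied to the Dunkl-type Fefferman-Stein operator itself -- between vector weak-$(1,1)$ (first part of Theorem~\ref{FS}) and vector strong-$(r,r)$ -- only covers the range $1<p<r$, and even after dualization one obtains a factor growing like $p$ rather than like $p^{1/r}$. The detour through the semi-group is essential, because for symmetric diffusion semi-groups the scalar $L^p$-constant does not blow up as $p\to+\infty$; this uniformity is precisely what allows one to gain the extra factor of $p^{1-1/r}$ in the interpolation. Combining this with the first paragraph then gives $C(d,\kappa,p,r)=O(p^{1/r})$ as $p\to+\infty$.
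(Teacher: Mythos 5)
There is a genuine gap at the core of your argument. The reduction to the semi-group maximal operator via the pointwise domination of Lemma \ref{CFO} is fine, but the step that is supposed to produce $A(p,r)=O(p^{1/r})$ --- ``a Marcinkiewicz-type interpolation between the endpoint at $p=r$ and a large-$p$ endpoint furnished by the uniform scalar bound'' --- has no second endpoint. The uniform scalar bound $\|\sup_{t>0}H_t^{\mathbb{Z}_2^d}g\|_{\mathbb{Z}_2^d,p}\leqslant A_p\|g\|_{\mathbb{Z}_2^d,p}$ with $A_p$ bounded in $p$ yields no $L^{p_1}(\ell^r)$ estimate for any $p_1>r$: upgrading it to a vector-valued statement would require interchanging the supremum in $t$ with the $\ell^r$-sum, which is exactly what fails. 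Indeed, the Proposition of Section 4 exhibits $f$ with $\|f\|_{\ell^r}\in L^\infty(\mu_\kappa^{\mathbb{Z}_2})$ but $\|\mathcal M_\kappa^{\mathbb{Z}_2}f\|_{\ell^r}\equiv+\infty$, so no endpoint at or near $p=+\infty$ is available, and an interpolation with a single endpoint proves nothing for $p>r$. (The appeal to a \emph{vector-valued} maximal theorem for symmetric diffusion semi-groups in \cite{SteinTopics} is also unsupported: that reference contains only the scalar maximal theorem, which is what Lemma \ref{CFO} already uses.)

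Moreover, the reason you give for discarding the duality route is a miscalculation, and the duality route is precisely the paper's proof. For $p>r$ one writes $\bigl\|\,\|\mathcal M_\kappa^{\mathbb{Z}_2^d}f\|_{\ell^r}\bigr\|_{\mathbb{Z}_2^d,p}^{r}=\bigl\|\,\|\mathcal M_\kappa^{\mathbb{Z}_2^d}f\|_{\ell^r}^{r}\bigr\|_{\mathbb{Z}_2^d,p/r}$ and dualizes against $L^{(p/r)'}(\mu_\kappa^{\mathbb{Z}_2^d})$ with $(p/r)'=\tfrac{p}{p-r}$; the weighted inequality for the auxiliary operator $M_\kappa^{\mathbb{Z}_2^d,R}$ of \cite{Luc} then reduces everything to the scalar maximal constant on $L^{(p/r)'}$, which behaves like $\tfrac{(p/r)'}{(p/r)'-1}=\tfrac{p}{r}=O(p)$ as $p\to+\infty$. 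The $r$-th root taken at the very end --- since one has estimated the $r$-th power of the $L^p$ norm --- converts this $O(p)$ into the desired $O(p^{1/r})$. So the dualization does \emph{not} lose a power of $p$, no detour through the semi-group is needed, and the semi-group cannot in any case supply the missing vector-valued endpoint.
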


\begin{proof} 
 Since the parameter $r$ is fixed, it is enough to consider the  proof of Theorem \ref{FS}  when $p>r$. As explained in \cite{Luc}, once we have constructed the operator $M_\kappa^{\mathbb{Z}_2^d,R}$ (see \cite{Luc} for the definition) and shown that it satisfies a scalar maximal theorem and a weighted inequality, we can follow almost verbatim the proof of the vector-valued maximal theorem for the Hardy-Littlewood maximal operator  (see \cite{FeffermanStein} or \cite{SteinGros}). Thus, it is easily seen that the dependence on $p$ is given by the constant (with exponent $\tfrac{1}{r}$) of the maximal theorem for  $M_\kappa^{\mathbb{Z}_2^d,R}$ and for the space $L^{\frac{p}{p-r}}(\mu_\kappa^{\mathbb{Z}_2^d})$. Since this constant is obtained by interpolation, we can  write 
\[
 C(d,\kappa,p,r)=C(d,\kappa,r)\biggl(C(d,\kappa)\frac{\frac{p}{p-r}}{\frac{p}{p-r}-1}\biggr)^{\frac{p-r}{pr}}.
\]
But it is obvious that
\[
 \biggl(C(d,\kappa)\frac{\frac{p}{p-r}}{\frac{p}{p-r}-1}\biggr)^{\frac{p-r}{p}}=\underset{p\to+\infty}{O}\bigl(p\bigr),
\]
and the lemma is proved.
\end{proof}
 
The last lemma gives us (under the hypothesis of Theorem \ref{expint}) a sharp estimate of the measure of the set $\{x \in K: \|\mathcal{M}_\kappa^{\mathbb{Z}_2^d}f(x)\|_{\ell^r}^r>\lambda\}$ for every $\lambda>0$, where $K$ denotes a compact subset of $\mathbb R^d$. More precisely we have the following inequality.

\begin{lem} \label{ie3}
Let $f=(f_n)_{n\geqslant1}$ be a sequence of measurable functions defined on $\mathbb{R}^d$ and let $r$ satisfy $1<r<+\infty$. If  $\|f\|_{\ell^r} \in L^\infty(\mu^{\mathbb{Z}^d_2}_\kappa)$ is such that
\[
\mu^{\mathbb{Z}^d_2}_\kappa\Bigl(\mathrm{supp }\|f\|_{\ell^r}^r\Bigr)<+\infty, 
\]
then for every compact subset $K$ of $\mathbb{R}^d$ and every $\lambda>0$,
\begin{multline*}
 \mu^{\mathbb{Z}^d_2}_\kappa\Bigl(\Bigl\{x \in K: \|\mathcal{M}_\kappa^{\mathbb{Z}_2^d}f(x)\|_{\ell^r}^r>\lambda\Bigr\} \Bigr)\\
\leqslant \max\Bigl\{2\mu^{\mathbb{Z}^d_2}_\kappa(K);\mu^{\mathbb{Z}^d_2}_\kappa\Bigl(\mathrm{supp }\|f\|_{\ell^r}^r\Bigr)\Bigr\}\mathrm{e}^{-\Bigl(\frac{\log (2)}{2C\|\|f\|_{\ell^r}^r\|_{\mathbb{Z}_2^d,\infty}}\Bigr)\lambda},
\end{multline*}
where $C=C(d,\kappa,r)$ is independent of $(f_n)_{n\geqslant1}$, $K$ and $\lambda$.
\end{lem}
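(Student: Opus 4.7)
The plan is to combine Chebyshev's inequality applied at a large exponent $p$ with the strong-type bound of Theorem \ref{FS}(2), then to optimize $p$ using the sharp asymptotic $C(d,\kappa,p,r)=O(p^{1/r})$ provided by Lemma \ref{ie2}; this $p^{1/r}$ growth is precisely what converts the Chebyshev tail into an exponential one. Concretely, for $p>r$, Chebyshev and Theorem \ref{FS}(2) yield
\[
\mu_\kappa^{\mathbb Z_2^d}\bigl(\{x\in K:\|\mathcal M_\kappa^{\mathbb Z_2^d}f(x)\|_{\ell^r}^r>\lambda\}\bigr)\leqslant \frac{C(d,\kappa,p,r)^p}{\lambda^{p/r}}\,\bigl\|\,\|f\|_{\ell^r}\bigr\|_{\mathbb Z_2^d,p}^p,
\]
and since $\|f\|_{\ell^r}^p$ vanishes outside $\mathrm{supp}\,\|f\|_{\ell^r}^r$ and is pointwise $\leqslant\|\,\|f\|_{\ell^r}^r\|_{\mathbb Z_2^d,\infty}^{p/r}$ on it,
\[
\bigl\|\,\|f\|_{\ell^r}\bigr\|_{\mathbb Z_2^d,p}^p\leqslant \bigl\|\,\|f\|_{\ell^r}^r\bigr\|_{\mathbb Z_2^d,\infty}^{p/r}\mu_\kappa^{\mathbb Z_2^d}\bigl(\mathrm{supp}\,\|f\|_{\ell^r}^r\bigr).
\]

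Inserting Lemma \ref{ie2} in the form $C(d,\kappa,p,r)\leqslant Ap^{1/r}$ (valid for $p$ sufficiently large, with $A=A(d,\kappa,r)$) turns the estimate into
\[
\Bigl(\frac{A^r p\,\|\,\|f\|_{\ell^r}^r\|_{\mathbb Z_2^d,\infty}}{\lambda}\Bigr)^{p/r}\mu_\kappa^{\mathbb Z_2^d}\bigl(\mathrm{supp}\,\|f\|_{\ell^r}^r\bigr).
\]
Choosing $p:=\lambda/\bigl(eA^r\,\|\,\|f\|_{\ell^r}^r\|_{\mathbb Z_2^d,\infty}\bigr)$ makes the base of the power equal to $e^{-1}$, giving the exponential bound $\mu_\kappa^{\mathbb Z_2^d}\bigl(\mathrm{supp}\,\|f\|_{\ell^r}^r\bigr)\exp\bigl(-c_0\lambda/\|\,\|f\|_{\ell^r}^r\|_{\mathbb Z_2^d,\infty}\bigr)$ with $c_0=1/(erA^r)$. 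This choice is legitimate once $\lambda$ exceeds a threshold $\lambda_0=\mathrm{const}\cdot\|\,\|f\|_{\ell^r}^r\|_{\mathbb Z_2^d,\infty}$ ensuring that the chosen $p$ lies in the asymptotic range of Lemma \ref{ie2}.

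For $\lambda<\lambda_0$, I fall back on the trivial inclusion and the bound $\mu_\kappa^{\mathbb Z_2^d}(\{x\in K:\cdots>\lambda\})\leqslant\mu_\kappa^{\mathbb Z_2^d}(K)$. It then remains to select a single constant $C_{d,\kappa,r}$ so that simultaneously $\log(2)/(2C_{d,\kappa,r})\leqslant c_0$ and $\lambda_0\leqslant 2C_{d,\kappa,r}\|\,\|f\|_{\ell^r}^r\|_{\mathbb Z_2^d,\infty}$; under these two constraints, in the small-$\lambda$ regime one has $\bigl(\log(2)/(2C_{d,\kappa,r}\|\|f\|_{\ell^r}^r\|_{\mathbb Z_2^d,\infty})\bigr)\lambda\leqslant\log 2$, so $\mu_\kappa^{\mathbb Z_2^d}(K)\leqslant 2\mu_\kappa^{\mathbb Z_2^d}(K)\mathrm{e}^{-\cdots\lambda}$, while in the large-$\lambda$ regime the optimized Chebyshev bound dominates. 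Taking the maximum of the two right-hand sides produces the claimed inequality. The main technical point is the optimization step, which relies crucially on the $p^{1/r}$ growth from Lemma \ref{ie2}—anything weaker would fail to yield exponential decay; the remaining work is simply bookkeeping to calibrate the two regimes into the precise numerical constants $2$ and $\log 2$ appearing in the statement.
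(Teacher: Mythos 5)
Your proposal is correct and follows essentially the same route as the paper: Chebyshev's inequality at a large exponent, the vector-valued maximal theorem together with the $O(p^{1/r})$ growth of its constant from Lemma \ref{ie2}, the choice $p\sim\lambda/\|\,\|f\|_{\ell^r}^r\|_{\mathbb Z_2^d,\infty}$ to produce exponential decay, and the trivial bound $\mu_\kappa^{\mathbb Z_2^d}(K)$ in the small-$\lambda$ regime. The only differences are cosmetic (you apply Chebyshev at exponent $p$ on $\|\mathcal M_\kappa^{\mathbb Z_2^d}f\|_{\ell^r}$ where the paper uses $pr$ on its $r$-th power, and you normalize the base to $\mathrm{e}^{-1}$ rather than $\tfrac12$), and your explicit attention to the threshold where the asymptotic of Lemma \ref{ie2} applies is, if anything, slightly more careful than the paper's.
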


\begin{proof}
For every $p$ satisfying $1\leqslant p<+\infty$, the Chebyshev inequality yields
\begin{multline*} 
 \mu^{\mathbb{Z}^d_2}_\kappa\Bigl(\Bigl\{x \in K: \|\mathcal{M}_\kappa^{\mathbb{Z}_2^d}f(x)\|_{\ell^r}^r>\lambda\Bigr\} \Bigr)\\
\leqslant\frac{1}{\lambda^p}\int_{\bigl\{x \in K: \, \|\mathcal{M}_\kappa^{\mathbb{Z}_2^d}f(x)\|_{\ell^r}^r>\lambda\bigr\}}\|\mathcal M^{\mathbb{Z}^d_2}_\kappa f(x)\|_{\ell^r}^{pr}\,\mathrm{d}\mu^{\mathbb{Z}^d_2}_\kappa(x), 
\end{multline*}
which implies, by enlarging the domain of integration,
\[
 \mu^{\mathbb{Z}^d_2}_\kappa\Bigl(\Bigl\{x \in K: \|\mathcal{M}_\kappa^{\mathbb{Z}_2^d}f(x)\|_{\ell^r}^r>\lambda\Bigr\} \Bigr)\leqslant \frac{1}{\lambda^p}\int_{\mathbb{R}^d}\|\mathcal M^{\mathbb{Z}^d_2}_\kappa f(x)\|_{\ell^r}^{pr}\,\mathrm{d}\mu^{\mathbb{Z}^d_2}_\kappa(x).
\]
By applying the vector-valued maximal theorem for $\mathcal M^{\mathbb{Z}^d_2}_\kappa$ we get
\[
 \mu^{\mathbb{Z}^d_2}_\kappa\Bigl(\Bigl\{x \in K: \|\mathcal{M}_\kappa^{\mathbb{Z}_2^d}f(x)\|_{\ell^r}^r>\lambda\Bigr\} \Bigr)\leqslant \frac{\bigl(C(d,\kappa,pr,r)\bigr)^{pr}}{\lambda^p}\int_{\mathbb{R}^d}\|f(x)\|_{\ell^r}^{pr}\,\mathrm{d}\mu^{\mathbb{Z}^d_2}_\kappa(x),
\]
where $C=C(d,\kappa,pr,r)$ is the constant of Theorem \ref{FS} (and which is independent of $(f_n)_{n\geqslant1}$, $K$ and $\lambda$). Thanks to Lemma \ref{ie2} we are lead to
\[
 \mu^{\mathbb{Z}^d_2}_\kappa\Bigl(\Bigl\{x \in K: \|\mathcal{M}_\kappa^{\mathbb{Z}_2^d}f(x)\|_{\ell^r}^r>\lambda\Bigr\} \Bigr)\leqslant \frac{Cp^p}{\lambda^p}\int_{\mathbb{R}^d}\|f(x)\|_{\ell^r}^{pr}\,\mathrm{d}\mu^{\mathbb{Z}^d_2}_\kappa(x), 
\]
where $C=C(d,\kappa,r)$ is independent of $(f_n)_{n\geqslant1}$,  $K$,  $\lambda$ and $p$. The hypothesis of the lemma allows us to write
\begin{multline} \label{ie3eq1}
 \mu^{\mathbb{Z}^d_2}_\kappa\Bigl(\Bigl\{x \in K: \|\mathcal{M}_\kappa^{\mathbb{Z}_2^d}f(x)\|_{\ell^r}^r>\lambda\Bigr\} \Bigr)\\
\leqslant \biggl(\frac{Cp\,\|\|f\|_{\ell^r}^r\|_{\mathbb{Z}_2^d,\infty}}{\lambda}\biggr)^p\mu^{\mathbb{Z}^d_2}_\kappa\Bigl(\mathrm{supp }\|f\|_{\ell^r}^r\Bigr).
\end{multline}
We now exploit \eqref{ie3eq1} by choosing $p$ in terms of $\lambda$.  If $\lambda\geqslant 2C\|\|f\|_{\ell^r}^r\|_{\mathbb{Z}_2^d,\infty}$, we put
\[
 p=\frac{\lambda}{2C\|\|f\|_{\ell^r}^r\|_{\mathbb{Z}_2^d,\infty}}\geqslant1.
\]
Then \eqref{ie3eq1} can be reformulated as follows:
\[
 \mu^{\mathbb{Z}^d_2}_\kappa\Bigl(\Bigl\{x \in K: \|\mathcal{M}_\kappa^{\mathbb{Z}_2^d}f(x)\|_{\ell^r}^r>\lambda\Bigr\} \Bigr)\leqslant\Bigl(\frac{1}{2}\Bigr)^p\mu^{\mathbb{Z}^d_2}_\kappa\Bigl(\mathrm{supp }\|f\|_{\ell^r}^r\Bigr), 
\]
that is,
\begin{equation} \label{ie3eq2}
  \mu^{\mathbb{Z}^d_2}_\kappa\Bigl(\Bigl\{x \in K: \|\mathcal{M}_\kappa^{\mathbb{Z}_2^d}f(x)\|_{\ell^r}^r>\lambda\Bigr\} \Bigr)
\leqslant
\mathrm{e}^{-\Bigl(\frac{\log (2)}{2C\|\|f\|_{\ell^r}^r\|_{\mathbb{Z}_2^d,\infty}}\Bigr)\lambda}\mu^{\mathbb{Z}^d_2}_\kappa\Bigl(\mathrm{supp }\|f\|_{\ell^r}^r\Bigr).
\end{equation}
 If $\lambda\leqslant 2C\|\|f\|_{\ell^r}^r\|_{\mathbb{Z}_2^d,\infty}$, we immediately obtain
\[
 \mu^{\mathbb{Z}^d_2}_\kappa\Bigl(\Bigl\{x \in K: \|\mathcal{M}_\kappa^{\mathbb{Z}_2^d}f(x)\|_{\ell^r}^r>\lambda\Bigr\} \Bigr)\leqslant \mu^{\mathbb{Z}^d_2}_\kappa(K).
\]
Since
\[
 \mathrm{e}^{-\Bigl(\frac{\log (2)}{2C\|\|f\|_{\ell^r}^r\|_{\mathbb{Z}_2^d,\infty}}\Bigr)\lambda}\geqslant\frac{1}{2},
\]
we find that
\begin{equation} \label{ie3eq3}
 \mu^{\mathbb{Z}^d_2}_\kappa\Bigl(\Bigl\{x \in K: \|\mathcal{M}_\kappa^{\mathbb{Z}_2^d}f(x)\|_{\ell^r}^r>\lambda\Bigr\} \Bigr)\leqslant 2\mu^{\mathbb{Z}^d_2}_\kappa(K) \mathrm{e}^{-\Bigl(\frac{\log (2)}{2C\|\|f\|_{\ell^r}^r\|_{\mathbb{Z}_2^d,\infty}}\Bigr)\lambda}.
\end{equation}
The desired result is then a trivial consequence of  \eqref{ie3eq2} and \eqref{ie3eq3}.
\end{proof}

We are now in a position to prove Theorem \ref{expint}.

\begin{proof}
 Let $K$ be a compact subset of $\mathbb{R}^d$ and let $\varepsilon$ be a real number which satisfies
\[
0\leqslant\varepsilon<\frac{\log( 2)}{2C_{d,\kappa,r}\bigl\|\,\|f\|_{\ell^r}^r\bigr\|_{\mathbb{Z}_2^d,\infty}}, 
\]
where $C_{d,\kappa,r}$ is the constant of Lemma \ref{ie3}. We first write
\[
\int_K\mathrm{e}^{\varepsilon\|\mathcal{M}_\kappa^{\mathbb{Z}_2^d}f(x)\|_{\ell^r}^r}\,\mathrm{d}\mu_\kappa^{\mathbb{Z}_2^d}(x)=\mu_\kappa^{\mathbb{Z}_2^d}(K)+\int_K\Bigl(\mathrm{e}^{\varepsilon\|\mathcal{M}_\kappa^{\mathbb{Z}_2^d}f(x)\|_{\ell^r}^r}-1\Bigr)\,\mathrm{d}\mu_\kappa^{\mathbb{Z}_2^d}(x). 
\]
We then apply the equality of Lemma \ref{ie1} to the function $\varphi:t\mapsto \mathrm{e}^{\varepsilon t}-1$ to obtain
\begin{multline*}
\int_K\mathrm{e}^{\varepsilon\|\mathcal{M}_\kappa^{\mathbb{Z}_2^d}f(x)\|_{\ell^r}^r}\,\mathrm{d}\mu_\kappa^{\mathbb{Z}_2^d}(x)\\
=\mu_\kappa^{\mathbb{Z}_2^d}(K)+\varepsilon\int_0^{+\infty}\mathrm{e}^{\varepsilon\lambda}\mu^{\mathbb{Z}^d_2}_\kappa\Bigl(\Bigl\{x \in K: \|\mathcal{M}_\kappa^{\mathbb{Z}_2^d}f(x)\|_{\ell^r}^r>\lambda\Bigr\} \Bigr)\,\mathrm{d}\lambda. 
\end{multline*}
Thanks to Lemma \ref{ie3}, we are led to
\begin{multline*}
\int_K\mathrm{e}^{\varepsilon\|\mathcal{M}_\kappa^{\mathbb{Z}_2^d}f(x)\|_{\ell^r}^r}\,\mathrm{d}\mu_\kappa^{\mathbb{Z}_2^d}(x)\\ \leqslant\mu_\kappa^{\mathbb{Z}_2^d}(K)+\varepsilon\max\Bigl\{2\mu^{\mathbb{Z}^d_2}_\kappa(K);\mu^{\mathbb{Z}^d_2}_\kappa\Bigl(\mathrm{supp }\|f\|_{\ell^r}^r\Bigr)\Bigr\}\int_0^{+\infty}\mathrm{e}^{\lambda\Bigl(\varepsilon-\frac{\log (2)}{2C\|\|f\|_{\ell^r}^r\|_{\mathbb{Z}_2^d,\infty}}\Bigr)}\,\mathrm{d}\lambda. 
\end{multline*}
The condition on $\varepsilon$ and an integration allow us to conclude.
\end{proof}
\bibliography{tworesults}
\end{document}